\title{Killing a Vortex\thanks{An earlier version of this paper has appeared at \href{https://focs2022.eecs.berkeley.edu/}{FOCS 2022} \cite{ThilikosW22Killing}.}}
\author{\bigskip\large Dimitrios M. Thilikos\thanks{LIRMM, Université de Montpellier, CNRS, Montpellier, France.\ Email: 
\texttt{sedthilk@thilikos.info}.}~$^{,}$\thanks{Supported by the ANR projects DEMOGRAPH (ANR-16-CE40-0028), ESIGMA (ANR-17-CE23-0010), and the French-German Collaboration ANR/DFG Project UTMA (ANR-20-CE92-0027).}\and\large 
\and\large 
Sebastian Wiederrecht\thanks{LIRMM, Université de Montpellier, CNRS, Montpellier, France and Discrete Mathematics Group, Institute for Basic Science, Daejeon, South Korea.\ Email: \texttt{sebastian.wiederrecht@gmail.com}}~$^{,}$\thanks{Supported by the ANR project ESIGMA (ANR-17-CE23-0010).}~$^{,}$\thanks{Supported by the Institute for Basic Science (IBS-R029-C1).}}
\date{}
\begin{document}

\maketitle

\vspace{-10mm}

\begin{abstract}
\noindent The Graph Minors Structure Theorem of Robertson and Seymour asserts that, for every graph $H,$ every $H$-minor-free graph can be obtained by clique-sums of ``almost embeddable'' graphs.
Here a graph is ``almost embeddable'' if it can be obtained from a graph of bounded Euler-genus by pasting graphs of bounded pathwidth in an ``orderly fashion'' into a bounded number of faces, called the \textit{vortices}, and then adding a bounded number of additional vertices, called \textit{apices}, with arbitrary neighborhoods.
Our main result is a {full classification}  of all graphs $H$ for which the use of vortices in the theorem above can be avoided.
To this end we identify a (parametric) graph $\mathscr{S}_{t}$ and prove that all $\mathscr{S}_{t}$-minor-free graphs can be obtained by clique-sums of graphs embeddable in a surface of bounded Euler-genus after deleting a bounded number of vertices.
We show that this result is tight in the sense that the appearance of vortices cannot be avoided for $H$-minor-free graphs, whenever $H$ is not a minor of $\mathscr{S}_{t}$ for some $t\in\mathbb{N}.$

Using our new structure theorem, we design an algorithm that, given an $\mathscr{S}_{t}$-minor-free graph $G,$ computes  the generating function of all perfect matchings of $G$ in polynomial time. 
Our results, combined with known complexity results, imply a complete characterization of minor-closed graph classes where the number of perfect matchings is polynomially computable: They are exactly those graph classes that do not contain every $\mathscr{S}_{t}$ as a minor.
This provides a \textit{sharp} complexity dichotomy for the problem of counting perfect matchings in minor-closed classes.
\end{abstract}

\medskip

\noindent \textbf{Keywords:} Perfect Matchings, Permanent, Pfaffian Orientations, Graph Minors, Counting Algorithms, Graph Parameters.

\newpage

\tableofcontents

\newpage
\section{Introduction}\label{@photographic}
We consider the problem $\mbox{\textsc{\#Perfect Matching}},$ asking for the number of perfect matchings, 
on minor-closed graph classes. 
The first polynomial algorithm for this problem was given for the class of planar graphs  
by Kasteleyn in 1961 \cite{Kasteleyn61thest}, motivated by the dimer problem in Theore\-tical Physics \cite{Kasteleyn67graph,Kasteleyn61thest,TemperleyF61dimer,Kasteleyn63dimer}  (see also the results by Temperley and Fisher \cite{TemperleyF61dimer}). For this algorithm, Kasteleyn introduced the celebrated FKT-method and the concept of Pfaffian orientations. 

Using these concepts as a departure point, the tractability horizon has been extended to several minor-closed graph classes, further than planar graphs, and it was an open problem 
whether this horizon contained all minor-closed graph classes. A negative answer to this question  
was given by Curticapean and Xia in~\cite{curticapean2022parameterizing} who proved that the classic result of Valiant on the \#$\mathsf{P}$-completeness of $\mbox{\textsc{\#Perfect Matching}}$ holds even when restricted to $K_{8}$-minor free graphs.

In this paper, we completely resolve the complexity of $\mbox{\textsc{\#Perfect Matching}}$
on minor-closed graph classes by providing a sharp characterization of the classes for which the problem is tractable.

\subsection{Some history} Given a $n\!\times\! n$ matrix $A\! =\! (a_{i,j})$ the \emph{permanent} and the \emph{determinant} of $A$ are defined as
\begin{eqnarray*}
\mathsf{perm}(A)=\sum_{ \sigma\in S_{n}}\prod_{i=1}^{n}a_{i, \sigma(i)} & \mbox{and} & 
\mathsf{det}(A)=\sum_{ \sigma\in S_{n}}\mathsf{sgn}( \sigma)\cdot \prod_{i=1}^{n}a_{i, \sigma(i)}
\end{eqnarray*}
respectively, where $S_{n}$ is the set of all possible permutations of the set $[n]=\{1,\ldots,n\}$ and 
$\mathsf{sgn}( \sigma)$ is the sign of the permutation $ \sigma\in S_{n}.$
The permanent is closely related to the $\mbox{\textsc{\#Perfect Matching}}$ problem
as the number of perfect matchings of a bipartite graph $B$ is equal to 
$\mathsf{perm}(A'_{B})$ where $A'_{B}$ is the biadjacency matrix\footnote{The \emph{biadjacency matrix}  of a bipartite graph $B=(X\dot\cup Y,E)$ where $X=\{x_{1},\ldots,x_{n}\},$ $Y=\{y_{1},\ldots,y_{n}\},$ and  $E\subseteq X\times Y,$ is the  binary $n\times n$ matrix $A = (a_{i,j})$ where $a_{i,j}=1$ if and only if $\{x_{i},y_{i}\}\in E(B).$} of $B.$

In 1913 György Pólya \cite{Polya13aufga} asked when it is possible to change the sings 
of the entries of a binary $n\times n$ matrix $A = (a_{i,j})$ and obtain a new matrix $A'$ where 
$\mathsf{perm}(A)=\mathsf{det}(A').$ Notice that, in such cases, the computation of the permanent
is reduced to the computation of the determinant that, in turn, can be computed in polynomial time (see \cite{Agrawal06deter} for an exposition on the permanent versus determinant problem).

Kasteleyn in 1961, in an attempt to solve the dimer problem (originated from Statistical Physics~\cite{Kasteleyn67graph,Kasteleyn61thest,TemperleyF61dimer})
introduced the concept of a Pfaffian graph: a matchable$^2$ graph $G$ is \emph{Pfaffian} if 
it admits an orientation $\overrightarrow{G}$ such that every conformal\footnote{A graph is \emph{matchable}
if it contains at least one perfect matching. A cycle $C$ of a graph $G$ is \emph{conformal} if it is even and $G-C$ is matchable.} cycle $C$ of $G$ has an odd number of directed edges in agreement to the orientation $\overrightarrow{G},$ when traversed  clockwise. Such an orientation of $G$ is called \emph{Pfaffian}.

A Pfaffian orientation of a graph $G$ implies a scheme to change the signs of the adjacency matrix $A(G)$ such that the determinant of the resulting matrix still yields the number of perfect matchings of $G.$ As a special case of Kasteleyn's approach, Pólya's question can be answered affirmatively for a matrix $A$ if and only if the bipartite graph $B$ that has $A$ as its biadjacency  matrix is Pfaffian (see \cite{McCuaig04polya}). { In particular, the requested change of signs follows directly from a Pfaffian orientation $\overrightarrow{B}$ of $B.$}   Moreover, Kasteleyn proved that planar (matchable) graphs are Pfaffian
and gave a polynomial algorithm for computing a Pfaffian orientation of a planar graph.
This algorithm is widely known as the FKT-method (making also reference to the authors of \cite{TemperleyF61dimer}) and implies that $\mbox{\textsc{\#Perfect Matching}}$ is polynomially solvable in planar graphs. 

In 1972, Little treated Pólya's problem by giving a \textsl{complete} combinatorial characterization of Pfaffian bipartite graphs.
Later, McCuaig, Robertson, Seymour, and Thomas gave a structural characterization
of Little's condition that permitted the design of a polynomial algorithm checking whether a bipartite graph is Pfaffian 
\cite{RobertsonST99perma,McCuaigRST97perma}. This immediately implied a polynomial algorithm for $\mbox{\textsc{\#Perfect Matching}}$ on Pfaffian bipartite graphs. On the negative side (motivated by the permanent vs determinant problem) 
Valiant introduced the counting complexity class $\#P$ and proved that $\mbox{\textsc{\#Perfect Matching}}$ is 
$\#P$-complete~\cite{Valiant79theco}.

See \autoref{@desecrated} for a timeline of the known results on the $\mbox{\textsc{\#Perfect Matching}}$ problem.

\subsection{Counting matchings in minor-closed graph classes}
\label{@preponderant}

In what concerns general (i.e. non-bipartite) graphs, Kasteleyn claimed in~\cite{Kasteleyn61thest,Kasteleyn67graph} that his algorithm for 
$\mbox{\textsc{\#Perfect Matching}}$ on planar graphs can be extended to graphs of bounded Euler-genus.
This was proved by Galluccio and Loebl \cite{GalluccioL99onthe} for orientable surfaces and by Tesler \cite{Tesler00match} for non-orientable surfaces.
The later results were based on a reduction of the problem to the computation of $2^{g}$ orientations\footnote{The original result suggested by Kasteleyn used $4^g$ many orientations. This was since Kasteleyn worked with orientable genus, hence the number $h$ of handles, while the result we present here refers to the Euler-genus which is $2h+c,$ where $c$ is the number of crosscaps involved.}, where $g$ is the Euler-genus of the input graph.
A unified algorithm for counting the number of perfect matchings on graphs of bounded Euler-genus without the use of orientations was given by Curticapean and Xia in \cite{CurticapeanX15param}.

Notice that graphs of bounded Euler-genus are minor-closed. The emerging question is 
whether $\mbox{\textsc{\#Perfect Matching}}$ is polynomially solvable for all minor-closed graph classes.
For this, given a finite set of graphs $\mathcal{F},$ we introduce the notation $\mbox{\textsc{\#Perfect Matching}}(\mathsf{Excl}(\mathcal{F}))$ problem restricted to graphs excluding all graphs in $\mathcal{F}$ as minors.
We stress that every minor-closed graph class can be characterized by the minor-exclusion of some {\em finite} $\mathcal{F},$ because of Robertson and Seymour's theorem~\cite{RobertsonS04wagner}.
Observe also that $\mbox{\textsc{\#Perfect Matching}}$ is the same as $\mbox{\textsc{\#Perfect Matching}}(\emptyset),$ while $\mbox{\textsc{\#Perfect Matching}}$ on planar graphs is $\mbox{\textsc{\#Perfect Matching}}(\{K_{5},K_{3,3}\}).$
Using this notation, 
advances on the $\mbox{\textsc{\#Perfect Matching}}$ problem can be described as follows:
Valiant proved in \cite{Vazirani89ncalg} that $\mbox{\textsc{\#Perfect Matching}}(\{K_{3,3}\})$ is polynomially solvable,
Straub, Thierauf, and Wagner proved in \cite{StraubTW14count} that $\mbox{\textsc{\#Perfect Matching}}(\{K_{5}\})$ is polynomially solvable and later Curticapean in~\cite{Curticapean14count} and 
Eppstein and Vazirani in \cite{EppsteinV19ncalg} proved that $\mbox{\textsc{\#Perfect Matching}}(\mathcal{F})$ is polynomially solvable for every $\mathcal{F}$ containing a minor of a graph that admits a drawing in the plane with  at most one crossing. 

\begin{figure}[t] 
\hspace{-4.3cm}\scalebox{.96}{\input{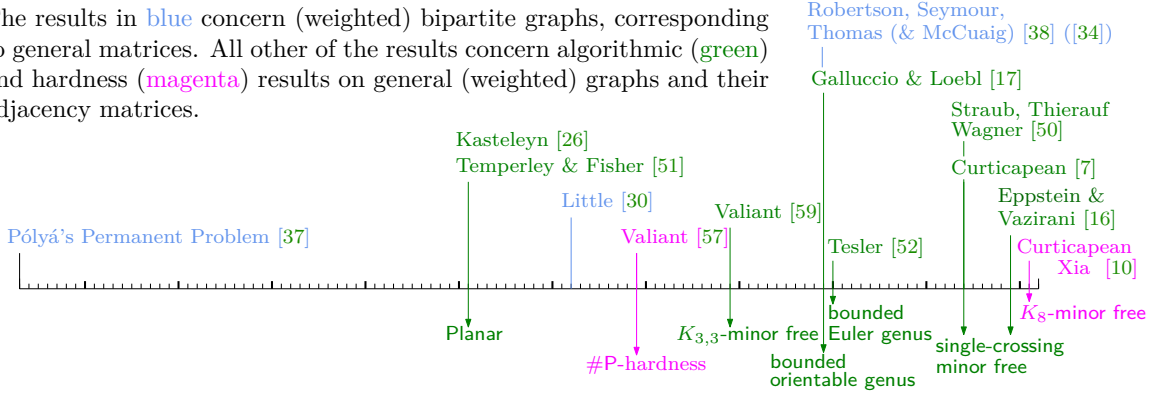}}
\caption{Timeline of the results on the complexity of the $\mbox{\textsc{\#Perfect Matching}}$ problem. 
} 
\label{@desecrated}
\end{figure}

Recently, Curticapean and Xia strengthened Valiant's
original hardness result by proving that $\mbox{\textsc{\#Perfect Matching}}(\{K_{8}\})$ is 
$\#P$-complete~\cite{CurticapeanX21param}. This means that the tractability horizon of $\mbox{\textsc{\#Perfect Matching}}$
does not include all minor-closed graph classes and it lies somewhere ``above'' single-crossing minor free graphs and ``below'' $K_{8}$-minor free graphs. In this paper we completely determine this tractability horizon.

\subsection{Our main result}

Let $G$ be a graph and let ${\mathbf{w}}:E(G)\rightarrow \Bbb{Z}$ be a function assigning weights to the edges of $G$ such that $\max\{|{\mathbf{w}}(e)|\mid e\in E(G)\}$ is bounded by some polynomial function of $|G|$ (that is the number of vertices of $G$).
We refer to such a pair $(G,{\mathbf{w}})$ as an \emph{edge-weighted graph} (or simply {\em weighted graph}). We use $\Perf{G}$ for the set of all perfect matchings of $G.$
We define the \emph{generating function of perfect matchings} of the weighted graph $(G,{\mathbf{w}})$
as

$$\GenerateMatchings{G,{\mathbf{w}}}\coloneqq \sum_{M\in\Perf{G}}\prod_{e\in M}x^{{\mathbf{w}}(e)}.$$

Notice that if ${\mathbf{1}}$ is the weighing function assigning unit weights on the edges of $G,$ then 
$\mathsf{perm}(A_{G})=\GenerateMatchings{G,{\mathbf{1}}}=c\cdot x^{|G|/2}$ where $c=|\Perf{G}|$ is the number of perfect matchings of $G.$ 
Therefore, any algorithm computing $\mathsf{PerfMatch}$ in polynomial time can also serve as a polynomial algorithm 
for the $\mbox{\textsc{\#Perfect Matching}}$ problem. Moreover, the computation of $\GenerateMatchings{G,{\mathbf{w}}}$ also permits to  solve the \textsc{Exact Perfect Matching} problem: given an edge-weighted graph and some non-negative integer  $k,$ decide whether there is a perfect matching of total weight exactly $k.$  \textsc{Exact Perfect Matching} 
was defined by Papadimitriou and Yannakakis in \cite{PapadimitriouY82theco}, has 
been extensively studied \cite{GurjarKMT17exact,ZhuLM08exact}, with  applications on DNA sequencing \cite{BlazewiczFKSW07apoly} and storage load balancing in blockchain networks \cite{liu2021efficient}.

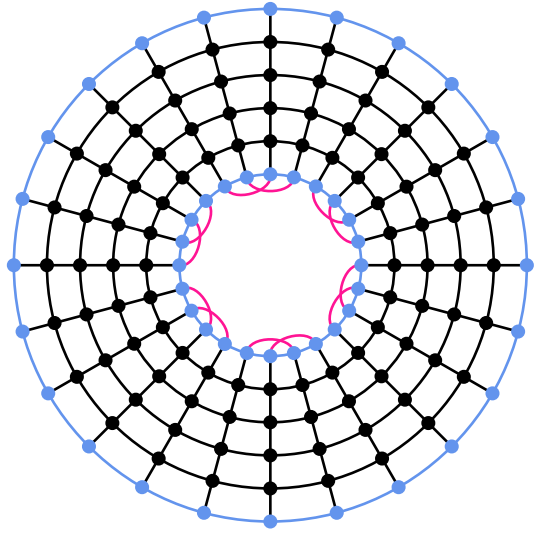
\begin{figure}[ht]
\centering 
{\begin{tikzpicture}[scale=1.1] 
\pgfdeclarelayer{background} 
\pgfdeclarelayer{foreground} 
\pgfsetlayers{background,main,foreground} 
 
\foreach \r in {2,...,5} 
\foreach \x in {1,...,24} 
{  
\pgfmathsetmacro\Radius{\r*4+7} 
\node[v:main] () at (\x*15: \Radius mm){}; 
}

\foreach \r in {1,6} 
\foreach \x in {1,...,24} 
{  
\pgfmathsetmacro\Radius{\r*4+7} 
\node[v:main, color=CornflowerBlue] () at (\x*15: \Radius mm){}; 
}

\begin{pgfonlayer}{background} 
\foreach \r in {2,...,5} 
{  
\pgfmathsetmacro\Radius{\r*4+7} 
\draw[e:main] (0,0) circle ({\Radius mm}); 
} 
\foreach \r in {1,6} 
{  
\pgfmathsetmacro\Radius{\r*4+7} 
\draw[e:main, color=CornflowerBlue] (0,0) circle ({\Radius mm}); 
}

\foreach \x in {1,...,24} 
{ 
\draw[e:main] (\x*15:11mm) to (\x*15:31mm); 
} 
 
\foreach \x in {1,...,6} 
{  
\pgfmathsetmacro\Position{4*15*(\x-1)} 
\draw[e:main,bend left=85, color=DeepPink] (\Position+15:11mm) to (\Position+45:11mm); 
\draw[e:main,bend left=85, color=DeepPink] (\Position+30:11mm) to (\Position+60:11mm); 
} 

\end{pgfonlayer} 
 
\end{tikzpicture} 
} 
\vspace{3mm} 
\caption{The shallow vortex grid $H_{6}$. The additional six pairs of crossed edges are depicted in \textcolor{DeepPink}{red}.
The two \textcolor{CornflowerBlue}{blue} cycles are the two extremal cycles of the $(6,24)$-cylindrical grid.}
\label{@industrial}
\end{figure}

\begin{definition}[Shallow vortex grids]\label{@duplicating}
    The \emph{shallow vortex grid} of \emph{order $k$} is the graph $\mathscr{S}_{k}$ obtained from the Cartesian product of a cycle $C=(u_1,u_2,\dots,u_{4k})$ on $4k$ vertices with a path $P= v_1,\dots,v_k$ on $k$ vertices (that we call \emph{$(k,4k)$-cylindrical grid}) by adding the edges $\{ (u_{4(i-1)+1},v_1),(u_{4(i-1)+3},v_1)\}$ and $\{ (u_{4(i-1)+2},v_1),(u_{4(i-1)+4},v_1)\}$ for every $i\in\{1,\dots,k\}.$
\end{definition}
 
In general, we see the \emph{shallow vortex grid} as  a \textsl{parametric graph}, that is the sequence $\mathscr{S}=\langle\mathscr{S}_{k}\rangle_{k\in\mathbb{N}}.$
To get a fairly good idea of $\mathscr{S}_{k},$ see \autoref{@industrial} for a drawing of $\mathscr{S}_{6}.$
We also define ${\mathcal{S}}$ as the graph class consisting of all minors of shallow vortex grids, i.e., all minors of the graphs in $\mathscr{S}.$ (For more on parametric graphs, see \cite{paul2023universal}.)

Our main result is a complexity dichotomy for $\mbox{\textsc{\#Perfect Matching}}(\mathcal{F}),$ based on the class ${\mathcal{S}}.$

\begin{theorem} 
\label{@controllers} 
Let $\mathcal{F}$ be some finite set of graphs. Then $\mbox{\textsc{\#\emph{Perfect Matching}}}(\mathcal{F})$ can be solved in polynomial time if $\mathcal{F}\cap {\mathcal{S}}\neq\emptyset$; otherwise it is $\#\mathsf{ P}$-complete.
\end{theorem}

As an example of an application of \autoref{@controllers}, $K_{7}$ is a minor of $\mathscr{S}_{18}$ (see~\autoref{@inevitably}), therefore $K_{7}\in \mathcal{S}.$ This implies that $\mbox{\textsc{\#Perfect Matching}}(\{K_{r}\})$ is polynomially solvable for $r\leq 7$ (which already answers the open question in~\cite{CurticapeanX21param}) and, as proved in~\cite{CurticapeanX21param}, is $\#P$-complete for $r\geq 8.$ For the general minor-exclusion of a finite set $\mathcal{F},$ containing graphs of size at most $h,$ one needs to check whether some graph in $\mathcal{F}$ is one of the graphs in $\mathcal{S}$ with at most $h$ vertices.\smallskip

In order to give some intuition why the minors of graphs as the one in \autoref{@industrial} provide the correct dichotomy criterion, we need a brief introduction to the Graph Minors Structure Theorem, proven by Robertson and Seymour in~\cite{RobertsonS03a} (in this paper 
we use the notation used in the simpler proof of this theorem that was proposed recently by Kawarabayashi, Thomas, and Wollan in~\cite{kawarabayashiTW21quickly} – see also \cite{KawarabayashiTW18anew}).

\subsection{The {\texttt{vga}}-hierarchy}

A \emph{graph parameter} is a function mapping graphs to non-negative integers. 
Let ${\mathsf{p}}$ and ${\mathsf{p}}'$ be two graph parameters. We write ${\mathsf{p}}\preceq {\mathsf{p}}'$
if there is a function $f:\Bbb{N}\to\Bbb{N}$ such that, for every graph
$G,$ it holds that ${\mathsf{p}}(G)\leq f({\mathsf{p}}'(G)).$ We also say that ${\mathsf{p}}$ and ${\mathsf{p}}'$
are \emph{asymptotically equivalent} if ${\mathsf{p}}\preceq {\mathsf{p}}'$ and ${\mathsf{p}}'\preceq {\mathsf{p}}.$

\begin{definition}[Treewidth]\label{@publications} 
Let $G$ be a graph. 
A \emph{tree decomposition} of $G$ is a tuple $(T,\beta )$ where $T$ is a tree and $\beta \colon\V{T}\rightarrow 2^{V(G)}$ is a function, called the \emph{bags} of $(T,\beta )$, such that 
\begin{enumerate}[label=\textit{\roman*})] 
\item $\bigcup_{t\in\V{T}}\Fkt{\beta }{t}=\V{G},$ 
\item for every $e\in\E{G}$ there exists $t\in\V{t}$ with $e\subseteq\Fkt{\beta }{t},$ and 
\item for every $v\in\V{G}$ the set $\CondSet{t\in\V{T}}{v\in\Fkt{\beta }{t}}$ induces a subtree of $T.$ 
\end{enumerate} 
The \emph{width} of a tree decomposition is defined as $\max_{t\in V(T)}\Abs{\Fkt{\beta }{t}}-1$ and the \emph{treewidth} of $G,$ denoted by $\tw{G},$ is the minimum width over all tree decompositions for $G.$ 
The \emph{adhesion} of $(T,\beta )$ is $\max_{dt\in E(T)}\Abs{\Fkt{\beta }{d}\cap\Fkt{\beta }{t}}.$
For a vertex $t\in\V{T},$ the \emph{torso of $G$ at $t$} is the graph $G_{t}$ obtained from $\InducedSubgraph{G}{\Fkt{\beta }{t}}$ by turning the sets $\Fkt{\beta }{t}\cap\Fkt{\beta }{d}$ into cliques, for all neighbors $d\in V(T)$ of $t.$
\end{definition}

\begin{definition}
\label{@historiography}
 
Given a graph $G,$ we define $\mathsf{p}_{\mbox{\scriptsize \texttt{v\!\!\! g\!\!\! a}}}(G)$ as the minimum $k$ such that $G$ has a tree decomposition ${\mathcal{D}}=(T,\beta )$ where, for every torso $G_{t}$ of ${\mathcal{D}},$ if $|G_{t}|>k,$ then the following holds: 
There is a set $A\subseteq V(G_{t}),$ called \emph{apex set}, a surface $\Sigma,$ and a $\Sigma$-decomposition $ \Delta$ of $G_{t}-A$ such that: 
\begin{itemize} 
\item[({\texttt{v}})] $ \Delta$ has at most $k_{{\texttt{v}}}$ vortices, each of depth at most $k_{{\texttt{v}}},$
\item[({\texttt{g}})] $\Sigma$ has Euler-genus at most $k_{{\texttt{g}}},$ and  
\item[({\texttt{a}})] $|A|\leq k_{{\texttt{a}}}$,
\end{itemize}
where $\max\{ k_{{\texttt{v}}},k_{{\texttt{g}}},k_{{\texttt{a}}}\}\leq k.$
We postpone the formal definition of a $\Sigma$-decomposition of a graph as well the definition of a vortex and its depth  to \autoref{@constraint} (see \autoref{@definition}).
Intuitively, $G_t$ has a $\Sigma$-decomposition with $k$ vortices if $G_t=G^{(0)}\cup G^{(1)}\cup\cdots\cup G^{(k)}$ where $G^{(0)}$ is a graph embedded in $\Sigma$ 
and each vortex $G^{(i)}$ is a graph of bounded pathwidth ``attached around'' some of the vertices of some face of the embedding of $G^{(0)}.$ 
\end{definition}  
 
\autoref{@historiography} has several variants ${\mathsf{p}}_{\mathsf{w}}.$
Where $\texttt{w}$ is string of length three obtained from $\texttt{vga}$ by replacing some of ``{\texttt{v}}'', ``{\texttt{g}}'', or ``{\texttt{a}}'' with ``{\texttt{-}}''.
Given such a string $\mathsf{w}$ the parameter ${\mathsf{p}}_{\mathsf{w}}$ is defined via altering \autoref{@historiography} by replacing for each $x\in\{ \texttt{v},\texttt{g},\texttt{a}\},$ where $x$ has been replaced with ``\texttt{-}'' in $\mathsf{w},$ the number $k_x$ from the corresponding condition in \autoref{@historiography} with $0.$
That is, if a letter appears in the string, then the corresponding object is allowed to appear in the decomposition.
Otherwise the appearance of the corresponding object is prohibited.

In addition, we derive one more parameter from \autoref{@historiography}.
That is the parameter $\mathsf{p}_{\mbox{\scriptsize \texttt{false}}}$ which we obtain by replacing whatever follows the statement ``the following holds:'' by some false statement.
We wish to stress here that ${\mathsf{p}}_{\mbox{\scriptsize \texttt{-\!\!\! -\!\!\! -}}}$ and $\mathsf{p}_{\mbox{\scriptsize \texttt{false}}}$ are different parameters.
To see this observe that ${\mathsf{p}}_{\mbox{\scriptsize \texttt{-\!\!\! -\!\!\! -}}}$ is zero on all planar graphs while $\mathsf{p}_{\mbox{\scriptsize \texttt{false}}}(G)=\mathsf{tw}(G)+1$ for all graph $G.$
This generates nine variants of parameters whose  
relation with respect to $\preceq$ is depicted in \autoref{@assessment}. We refer to this hierarchy of parameters as the {\text{\texttt{vga}}\emph{-hierarchy}}. We will later fix our attention to ${\mathsf{p}}_{\mbox{\scriptsize \texttt{-\!\!\! g\!\!\! a}}}$ where  
vortices disappear and therefore $G_{t}-A$ is just required to be embeddable to a surface of Euler-genus at most $k.$

On a lower level, the eight parameters above $\mathsf{p}_{\mbox{\scriptsize \texttt{false}}}$ can be understood as the different variants for the structure of the ``area'' which is controlled by a large order \textit{tangle} in the graph.
To clarify, tangles are dual objects to \textit{branchwidth}, a parameter which is assymptotically equivalent to treewidth which was introduced in Graph Minors X by Robertson and Seymour \cite{RobertsonS91X} to identify substructures in the graph which obstruct small treewidth.
The Grid Theorem asserts that every tangle of large order ``controls'' a large grid minor and the subsequent structure theorems, in essence, describe how the rest of the graph attaches to such a grid minor.
This form of attachment happens in two ways; via the highly representative infrastructure of a surface, or through small order separations which are represented by the decompositions of vortices, the apex set, and the $\leq3$-clique sums generated by applications of the \textit{Two Paths Theorem}\footnote{We discuss these notions in greater detail in \autoref{@constraint}}.
So, in a sense, the \texttt{vga}-hierarchy classifies graphs with respect to the structure of their high order tangles.
This is another way how the special role of $\mathsf{p}_{\mbox{\scriptsize \texttt{false}}}$ becomes apparent as this measures the situation where $G$ does not have any high-order tangles.
For more information on the lower levels of the \texttt{vga}-hierarchy we refer the reader to \cite{thilikos2023approximating,thilikos2023excluding}.

In~\autoref{@assessment} we depict, in relation to the {\texttt{vga}}-hierarchy, two more parameters that, when bounded, allow for $\mbox{\textsc{\#Perfect Matching}}$ to be solved in polynomial time.
The first is ${\mathsf{apex}}$ where ${\mathsf{apex}}(G)$ is the minimum number of vertices whose removal from $G$ yields a planar graph and the second is ${\mathsf{genus}}$ where ${\mathsf{genus}}(G)$ us the minimum Euler-genus of a surface where $G$ can be embedded.

\begin{figure}[h] 
\begin{center} 
\scalebox{.98}{\includegraphics{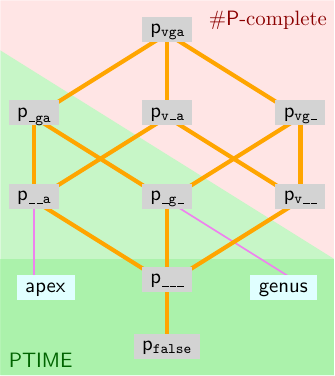}}
\end{center} 
\caption{The {\texttt{vga}}-hierarchy of parameters and the position of the parameters ${\mathsf{apex}}$ and ${\mathsf{genus}}$ in it. If ${\mathsf{p}}$ and ${\mathsf{p}}'$ are parameters in the above diagram then ${\mathsf{p}}\preceq {\mathsf{p}}'$ if and only if  there is a path between   ${\mathsf{p}}$ and ${\mathsf{p}}'$
that is ``above'' ${\mathsf{p}}'.$ 
The two \green{green}/{\textcolor{mypink}{pink}}-colored areas indicate the complexity of $\mbox{\textsc{\#Perfect Matching}}$ when  
restricted to graphs where each of the depicted parameters is bounded. 
The lower dark \green{green} area indicates the current state of the art on polynomial algorithms for $\mbox{\textsc{\#Perfect Matching}}.$} 
\label{@assessment}
\end{figure}

The \emph{Hadwiger number} of a graph $G$ is the maximum size of a clique minor of $G,$ denoted 
by ${\mathsf{hw}}(G).$
The Graph Minors Structure Theorem \cite{kawarabayashiTW21quickly,RobertsonS03a} (in short GMST) can be stated as follows.

\begin{proposition} 
\label{@naturalized}  
The graph parameters  
${\mathsf{hw}}$ and $\mathsf{p}_{\texttt{\emph{vga}}}$ are asymptotically equivalent.
\end{proposition}

 (For an alternative (non-parametric) statement of GMST, see \autoref{@guarantors}.) 
Notice that all parameters of the {\texttt{vga}}-hierarchy can be seen as generalizations of treewidth, starting
from the lowest level parameter ${\mathsf{p}}_{\mbox{\scriptsize\texttt{false}}},$ that is asymptotically equivalent to treewidth, to the highest level parameter ${\mathsf{p}}_{\mbox{\scriptsize \texttt{v\!\!\! g\!\!\! a}}},$ that is asymptotically equivalent to the Hadwiger number. 

In \cite{RobertsonS93exclu}, Robertson and Seymour proved that ${\mathsf{p}}_{\mbox{\scriptsize \texttt{-\!\!\! -\!\!\! -}}}(G)$ is asymptotically equivalent to the maximum size of an internally $4$-connected\footnote{A graph is \emph{internally $4$-connected} if it is $3$-connected and for every separation $(A,B)$ either $\Abs{A\setminus B}\leq 1$ or $\Abs{B\setminus A}\leq 1.$} single-crossing\footnote{A \emph{single-crossing graph} is one that can be drawn  in the plane with only one crossing.} minor of $G.$ Using this, the results 
in \cite{EppsteinV19ncalg} and in \cite{Curticapean14count} imply  an algorithm that, given a weighted graph $(G,{\mathbf{w}}),$
outputs ${\mathsf{GenPM}}(G,{\mathbf{w}})$ in time\footnote{We use the notation $h(k,n)=\mathcal{O}_{k}(g(n))$ to denote that $h(k,n)=\mathcal{O}(f(k)\cdot g(n)),$ for some function $f.$} $\mathcal{O}_{k}(|G|^{\mathcal{O}(1)}),$ where $k={\mathsf{p}}_{\mbox{\scriptsize \texttt{-\!\!\! -\!\!\! -}}}(G).$ 
This positions the results of \cite{EppsteinV19ncalg,Curticapean14count} to the second lower level of the {\texttt {vga}}-hierarchy (just above ${\mathsf{p}}_{\mbox{\scriptsize\texttt{false}}}$).
Apart from this result, the results in~\cite{GalluccioL99onthe,Tesler00match,galluccio2001optimization} imply that $\GenerateMatchings{G,{\mathbf{w}}}$ can be computed 
in time  $\mathcal{O}_{k}(|G|^{\mathcal{O}(1)})$  when $k=\genus{G}.$ Also, it is easy to see that $\GenerateMatchings{G,{\mathbf{w}}}$ 
can be computed 
in time $|G|^{\mathcal{O}(k)}$  when $k={\mathsf{apex}}(G)$ \cite{Curticapean19count,CurticapeanX15param}.
These three results are the best, so far, algorithmic results about when $\mbox{\textsc{\#Perfect Matching}}$ can be solved in polynomial time (corresponding to the dark green rectangle of the diagram in~\autoref{@assessment}). 

\subsection{Our approach}

We define a new parameter ${\mathsf{p}},$ based on  shallow vortex grids $\mathscr{S}_{k},$ as follows   $${\mathsf{p}}(G)=\max\{k\mid \mathscr{S}_{k}\text{ is a minor of } G\}.$$
Our main combinatorial result is a vortex-free refinement of the GMST (\autoref{@naturalized}).
In particular ${\mathsf{p}}$ can be seen as the vortex-free analogue of the Hadwiger number.

\begin{theorem} 
\label{@dialectical} 
The graph parameters $\mathsf{p}$ and $\mathsf{p}_{\texttt{\emph{-ga}}}$ are asymptotically equivalent.
\end{theorem}

\autoref{@dialectical} is a min-max theorem indicating that shallow vortex grids
can be seen as ``universal obstructions'' for the parameter ${\mathsf{p}}_{\mbox{\scriptsize \texttt{-\!\!\! g\!\!\! a}}}.$
Our proof implies that there is a function $f:\Bbb{N}\to\Bbb{N}$ and an algorithm that, given a graph $G$ and an integer $k,$ either finds the shallow vortex grid $\mathscr{S}_{k}$ as a minor of $G$
or outputs a tree decomposition of $G$ certifying that ${\mathsf{p}}_{\mbox{\scriptsize \texttt{-\!\!\! g\!\!\! a}}}(G)\leq f(k).$
Moreover, this algorithm runs in time $\mathcal{O}_{k}(|G|^{3})$  (see \autoref{@incinerated}) and, moreover, $f$ is a function that is exponential to some polynomial of $k$.

The proof of \autoref{@dialectical} is the main technical part of this paper. The fact that $\mathsf{ p}_{\mbox{\scriptsize \texttt{-\!\!\! g\!\!\! a}}}\preceq {\mathsf{p}}$ follows from the following result.

\begin{theorem}\label{@lovelessly} 
There exist functions $\alpha ,\gamma\colon\mathbb{N}\rightarrow\mathbb{N}$ such that every graph $G$ excluding some graph $H\in\mathcal{S}$ as a minor has a tree decomposition $(T,\beta )$ of adhesion at most $4\alpha (\Abs{V(H)})$ such that for every $t\in V(T),$ if $G_t$ is the torso of $G$ at $t,$ then there exists a set $A\subseteq V(G_t)$ with $\Abs{A}\leq \alpha (\Abs{V(H)})$ such that $G_t-A$ has Euler-genus at most $\gamma(\Abs{V(H)}).$ Moreover, $\gamma (x)=\mathsf{poly}(x),$ i.e., $\gamma $ is a polynomial function, while $\alpha(x)=2^{\mathsf{poly}(x)}.$
\end{theorem}

The proof of \autoref{@lovelessly} is presented in the first three  subsections of \autoref{@desexualized}. The proof of \autoref{@dialectical} is completed in \autoref{@inevitably}, where we show that 
$\mathsf{p}\preceq {\mathsf{p}}_{\mbox{\scriptsize \texttt{-\!\!\! g\!\!\! a}}}$ (\autoref{@prophecies}).

We prove \autoref{@lovelessly} by first using a local version of \autoref{@naturalized} and then applying the following key observation.
Indeed, one can get a stronger version of \autoref{@naturalized} which equips each vortex with a sequence of concentric cycles in the obtained $\Sigma$-decomposition.
These cycles provide quite a lot of infrastructure and are commonly referred to as a \emph{nest}.
Now consider the following sequence of observations.
\begin{enumerate}
\item If there are not many pairwise disjoint paths from the outer part of the nest to the boundary of the vortex itself, there exists a small set of vertices which may be added to the apex set in order to fully ``remove'' the vortex from the almost embedding by ``pushing'' the corresponding subgraph deeper into the tree-decomposition.
\item Hence, we may assume that there exist many pairwise disjoint paths which meet all cycles in the nest and end on the boundary of the vortex.
If now we cannot find many pairwise disjoint pairs of crossings within the vortex such that these crossings are arranged sequentially on the vortex-boundary we may use the fact that the vortex has a path-decomposition of bounded adhesion to delete a small number of vertices and remove all crossings on the boundary of the vortex which lie inside the vortex.
By doing so, however, the parts of the vortex that remain fully connected to the nest do not have any crossings and thus may be fully incorporated into the embdded part of the $\Sigma$-decomposition.
This procedure is another way of essentially removing the vortex.
\item On the other hand, if we find many pairwise disjoint paths meeting all cycles in the nest and ending on the vortex boundary such that there are many pairwise disjoint pairs of crossings on the endpoints of these paths which are arranged sequentially along the vortex boundary we have, essentially, found a minor model of a shallow vortex grid as depicted in \autoref{@industrial}.
\end{enumerate}
By applying these three observations to each of the vortices of the $\Sigma$-decomposition, we either find a way to enrich the apex set and thereby get rid of all vortices completely, or one of the vortices witnesses the third case above which provides us with the minor we excluded in the assumption.

Based on \autoref{@lovelessly}, we next prove our main algorithmic result.

\begin{theorem}\label{@unencumbered} 
There are an algorithm and a function $f:\Bbb{N}\to\Bbb{N}$ that, given a weighted graph $(G,{\mathbf{w}}),$ 
where the maximum absolute value of $\textbf{\emph{w}}$ is bounded by some polynomial in $|G|,$ outputs $\GenerateMatchings{G,{\mathbf{w}}}$ in time $\mathcal{O}(|G|^{f(k)}),$ where $k={\mathsf{p}}_{\mbox{\scriptsize \texttt{\emph{-\!\!\! g\!\!\! a}}}}(G).$ Our algorithm assumes that arithmetic operations are performed in constant time.
\end{theorem}

The algorithm of \autoref{@unencumbered} is presented in  \autoref{@hairdressers}.
It performs dynamic programming on the tree decomposition 
provided by \autoref{@lovelessly} and combines all the algebraic tools that have been employed so far around Pfaffian orientations for the $\mbox{\textsc{\#Perfect Matching}}$ problem~\cite{Kasteleyn61thest,Kasteleyn67graph,TemperleyF61dimer,GalluccioL99onthe,galluccio2001optimization,Tesler00match}.

Notice that ${\mathcal{F}}\cap {\mathcal{S}}\not=\emptyset$ if and only if 
there is a constant $c_{\mathcal{F}}$ such that for every ${\mathcal{F}}$-minor free graph $G$ it holds that ${\mathsf{p}}(G)\leq c_{\mathcal{F}}.$ This fact, combined with \autoref{@dialectical} and \autoref{@unencumbered} implies the positive 
part of \autoref{@controllers}. 

On the negative side, in \autoref{@stiidentenzeitung}, we prove the  following using as a departing point the complexity lower bound in \cite{curticapean2022parameterizing}.

\begin{theorem} 
\label{@winterhilfiwerlr} 
For every graph class ${\mathcal{G}}$ where ${\mathcal{S}}\subseteq {\mathcal{G}},$ $\mbox{\textsc{\#\emph{Perfect Matching}}}$ is \#{\textsf{\emph{P}}}-complete when its inputs are restricted to the graphs in ${\mathcal{G}}.$
\end{theorem}

Notice that ${\mathcal{F}}\cap {\mathcal{S}}=\emptyset$ if and only if all graphs in ${\mathcal{S}}$
are ${\mathcal{F}}$-minor free. This, combined with \autoref{@winterhilfiwerlr}, yields the negative part of \autoref{@controllers}.\medskip

One may ask whether the algorithm of \autoref{@unencumbered} can be improved to a {\textsl{fixed parameter}} one, that is an algorithm running in  time
$\mathcal{O}_{k}(|G|^{\mathcal{O}(1)})$  where $k={\mathsf{p}}_{\mbox{\scriptsize \texttt{-\!\!\! g\!\!\! a}}}(G)$ (we already mentioned that this is the case when $k={\mathsf{p}}_{\mbox{\scriptsize \texttt{-\!\!\! -\!\!\! -}}}(G)$ \cite{EppsteinV19ncalg,Curticapean14count} 
or when $k=\genus{G}$ \cite{GalluccioL99onthe,Tesler00match,galluccio2001optimization}). Unfortunately, this is not something to expect even for $k={\mathsf{p}}_{\mbox{\scriptsize \texttt{-\!\!\! -\!\!\! a}}}(G)$ (which is  
lower than ${\mathsf{p}}_{\mbox{\scriptsize \texttt{-\!\!\! g\!\!\! a}}}$ in the {\texttt{vga}}-hierarchy). Indeed, it was proved in \cite{CurticapeanX15param} that $\mbox{\textsc{\#Perfect Matching}}$
is \#{\textsf{W}}[1]-hard when parameterized by ${\mathsf{apex}}(G).$ As ${\mathsf{p}}_{\mbox{\scriptsize \texttt{-\!\!\! g\!\!\! a}}}\preceq {\mathsf{p}}_{\mbox{\scriptsize \texttt{-\!\!\! -\!\!\! a}}}\preceq {\mathsf{apex}},$ this hardness result 
holds also when the parameter is $k={\mathsf{p}}_{\mbox{\scriptsize \texttt{-\!\!\! -\!\!\! a}}}(G)$ or, even more, $k={\mathsf{p}}_{\mbox{\scriptsize \texttt{-\!\!\! g\!\!\! a}}}(G).$ This indicates that, under standard computational complexity assumptions, the algorithm of \autoref{@unencumbered} is optimal from the parameterized complexity point of view in the sense that the dependency of the degree of the polynomial on the excluded minor cannot be removed.
We stress that this dependency, while it cannot be removed, could however possibly be improved.

\section{Definitions and preliminary results}
\label{@exceptional}

We denote by $\Bbb{Z}$ the set of integers and by $\Bbb{R}$ the set of reals.
Given two integers $a,b\in\Bbb{Z}$ we denote the set $\CondSet{z\in\Bbb{Z}}{a\leq z\leq b}$ by $[a,b].$
In case $a>b$ the set $[a,b]$ is empty. For an integer $p≥ 1,$ we set $[p]=[p]$ and $\Bbb{N}_{≥ p}=\Bbb{N}\setminus [0,p-1].$
Whenever we need a closed interval over the reals we use $[x,y]_{\Bbb{R}}$ to avoid ambiguity.
Please note that this only happens on rare occasions.
\smallskip

All graphs considered in this paper are undirected, finite, and without loops or multiple edges.
We use standard graph-theoretic notation and we refer the reader to
\cite{Diestel10grap} for any undefined terminology.

\subsection{The Graph Minors Structure Theorem}\label{@constraint}

We rely heavily on the more refined versions of the GMST from \cite{kawarabayashiTW21quickly} and \cite{diestel2012excluded} instead of the original result proven by Robertson and Seymour in the Graph Minors series.
As the involved definitions and concepts are highly technical we dedicate this section almost entirely just to their statements and some explanations.
In the proof of our main theorem we actually need a stronger version of GMST which has, implicitly, already been proven in \cite{kawarabayashiTW21quickly} and which resembles the main theorem of \cite{diestel2012excluded}.
However, for the purpose of our proofs, in particular regarding algorithmic applications, it is more convenient to derive a synthesis of the results in \cite{diestel2012excluded} and those in \cite{kawarabayashiTW21quickly}.
The resulting statement makes the structure of the vortices within the original structure theorem more accessible and this might be of use to applications other than our own as well.

Our proof strategy is to ``exclude'' a shallow vortex grid as a minor.
This means that whenever we find a clique minor on the same number of vertices as our shallow vortex grid we have found a contradiction.
Hence, we may immediately apply the GMST to obtain a, so to speak, preliminary structure which we then can refine.
The following definitions come from the framework of   \cite{kawarabayashiTW21quickly,KawarabayashiTW18anew}.
Their main purpose is to provide a formal environment in which we can apply our refinement strategy.
This is necessary specifically because we will need to work both inside the vortices and outside of them, that is, in the parts of the graph that are properly embedded in some surface.

\paragraph{The Two Paths Theorem.}\label{@immaterial}
We begin by introducing the \textit{Two Paths Theorem}.
This theorem can be seen as the central bridging element that combines a notion of embeddability with a way disjoint paths can be routed within the graph.

Let $G$ be a graph and let $s_1,s_2,t_1,t_2\in\V{G}.$
The \textsc{Two Disjoint Paths Problem} (\textsc{TDPP}) with \emph{terminals} $s_1,s_2,t_1,t_2$ is the question for the existence of two paths $P_1$ and $P_2$ such that for both $i\in[2]$ $P_i$ joins $s_i$ and $t_i$ and $P_1$ and $P_2$ are vertex disjoint.
The characterization for the \yes-instances of the \textsc{TDPP} known as the \emph{Two Paths Theorem} plays an integral role in structural graph theory.
The statement of the Two Paths Theorem we present here makes use of the concept of the so called ``societies'' which play a focal role in \cite{kawarabayashiTW21quickly} and are used extensively in our proofs as well.

There exists a number of different forms for stating the Two Paths Theorem.
The version we present here is the one used by Kawarabayashi, Thomas, and Wollan in their proof of the GMST in \cite{kawarabayashiTW21quickly} and it is based on the concept of societies.
Roughly speaking, a society is a graph $G$ together with a vertex set $X\subseteq(G)$ which is equipped with a cyclic ordering $\Omega$.
This cyclic ordering is meant to indicate that we are interested in some particular topological properties of the set $X$ with regard to its connectivity within $G.$
Through the lens of the Two Paths Theorem, a society is meant to encode a specific subgraph which we would like to embed in a (closed) disk $ \Delta$ while drawing exactly the vertices of $X$ onto the boundary of $ \Delta.$
If this is possible, then $X$ forms a noose in the resulting embedding of the entire graph, otherwise the society represents an obstruction to embedability which, if such a thing occurs to often, would allow us to build a large clique minor.

\begin{definition}[Society]\label{@deflecting}
Let $\Omega$ be a cyclic ordering of the elements of some set which we denote by $\V{\Omega}.$
A \emph{society} is a pair $(G,\Omega),$ where $G$ is a graph and $\Omega$ is a cyclic ordering with $\V{\Omega}\subseteq\V{G}$.
A \emph{cross} in a society $(G,\Omega)$ is a pair $(P_1,P_2)$ of disjoint paths\footnote{When we say two paths are \emph{disjoint} we mean that their vertex sets are disjoint.} in $G$ such that $P_i$ has endpoints $s_i,t_i\in\V{\Omega}$ and is otherwise disjoint from $\V{\Omega},$ and the vertices $s_1,s_2,t_1,t_2$ occur in $\Omega$ in the order listed.
\end{definition}
Hence, $(G,s_1,s_2,t_1,t_2)$ is a \yes-instance of \textsc{TDPP} if and only if the society $(G,\Omega),$ where $\V{\Omega}=\Set{s_1,s_2,t_1,t_2}$ and the vertices occur in $\Omega$ in the order listed, has a cross.
See \autoref{@predominantly} for an illustration of a society with a cross.

\begin{figure}[h] 
    \begin{center} 
    \scalebox{0.75}{\includegraphics{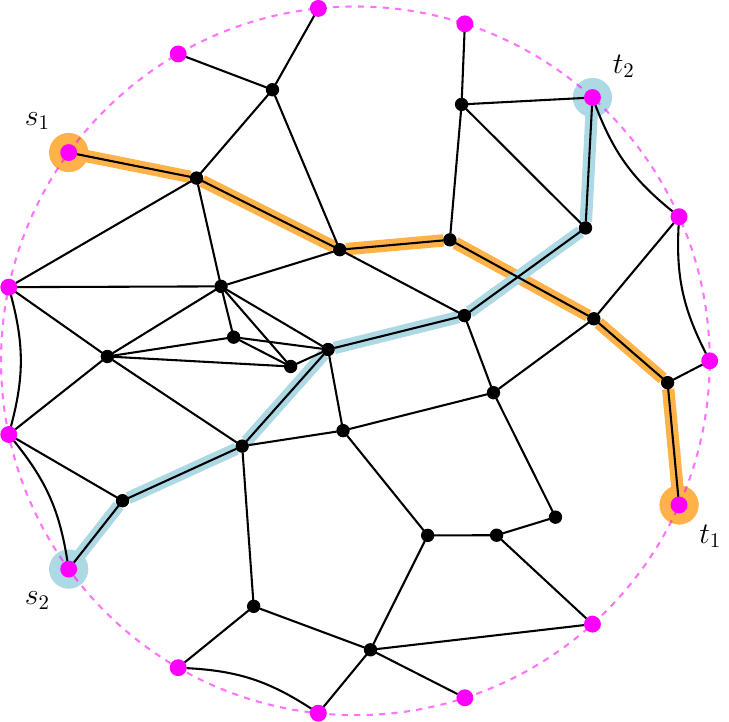}}
    \end{center}
    \caption{A society $(G,\Omega)$ with a cross. The \textcolor{BrightMagenta}{magenta} vertices are the vertices of $V(\Omega).$} 
    \label{@predominantly} 
\end{figure}

We will make use of this concept later when we start refining the vortices of our almost embeddings as follows:
The entire vortex will be encoded as a society $(G,\Omega)$ equipped with some additional infrastructure.
We will move along $\Omega$ trying to isolate pairwise disjoint crosses over ``segments'' of consecutive vertices of $\Omega.$
In case we can find many such segments, we can use these crosses together with the afore mentioned infrastructure to construct a shallow vortex grid.
Otherwise, only few such segments can be found which will allow us to remove all crosses on $(G,\Omega)$ by deleting a small vertex set.
The Two Paths Theorem will then allow us to embed the remaining parts of $(G,\Omega)$ in a way that is compatible with the already embedded part of our graph.

To fully present the Two Paths Theorem, we need to introduce some topological concepts as well.

By a \emph{surface} we mean a compact $2$-dimensional manifold with or without boundary.
By the classification theorem of surfaces \cite{Dyck1888Beitraege}, every surface is homeomorphic to the sphere with $h$ handles and $c$ cross-caps added, and the interior of $d$ disjoint closed disks $ \Delta_1,\dots, \Delta_d$ removed, in which case the \emph{Euler-genus} of the surface is defined to be $2h+c.$
We call the union of the boundaries of the disks $ \Delta_i$ the \emph{boundary} of the surface and each such boundary is a \emph{boundary component} of the surface.
For more information on surfaces and the Classification Theorem see for example \cite{Maunder1996Algebraic}.

\begin{definition}(Drawing on a surface)\label{@unarticulated}
A \emph{drawing} (with crossings) \emph{in a surface $\Sigma$} is a triple $\gamma=(U,V,E)$ such that
\begin{itemize} 
\item $V$ and $E$ are finite, 
\item $V\subseteq U\subseteq\Sigma,$ 
\item $V\cup\bigcup_{e\in E}e=U$ and $V\cap (\bigcup_{e\in E}e)=\emptyset,$ 
\item for every $e\in E,$ either $e=h([0,1]_{\mathbb{R}})\setminus\{h(0),h(1)\},$ where $h\colon[0,1]_{\Bbb{R}}\rightarrow U$ is a homeomorphism onto its image with $h(0),h(1)\in V,$ or $e=h(\mathbb{S}^2-(1,0)),$ where $h\colon\mathbb{S}^2\rightarrow U$ is a homeomorphism onto its image with $h((0,1))\in V,$ and 
\item if $e,e'\in E$ are distinct, then $\Abs{e\cap e'}$ is finite.
\end{itemize}
We call the set $V,$ sometimes referred to by $V(\gamma),$ the \emph{vertices of $\gamma$} and the set $E,$ referred to by $E(\gamma),$ the \emph{edges of $\gamma$}.
If $G$ is graph and $\gamma=(U,V,E)$ is a drawing with crossings in a surface $\Sigma$ such that $V$ and $E$ naturally correspond to $V(G)$ and $E(G)$ respectively, we say that $\gamma$ is a \emph{drawing of $G$ in $\Sigma$ (possibly with crossings)}. In the case where no two edges in $E(\gamma)$ have a common point, we 
say that $\gamma$ is a \emph{drawing of $G$ in $\Sigma$ without crossings}. In this last case, the connected components of $\Sigma\setminus U,$ are the \emph{faces} of $\gamma.$
\end{definition}

Next we need a more intricate formalization of what an ``almost embedding'' of a graph is supposed to be.
The main issue is that large portions of the graph that we are working with, might be hidden behind $3$-clique sums produced by repeated applications of the Two Paths Theorem.
To fully encode the almost embedding of a part of the graph $G,$ while keeping the entirety of $G$ accessible, Kawarabayashi, Thomas, and Wollan  developed $\Sigma$-decompositions in \cite{kawarabayashiTW21quickly,KawarabayashiTW18anew}.

\begin{definition}[$\Sigma$-Decomposition]\label{@custodians} 
Let $\Sigma$ be a surface. 
A \emph{$\Sigma$-decomposition} of a graph $G$ is a pair $ \Delta=(\gamma,\mathcal{D}),$ where $\gamma$ is a drawing of $G$ in $\Sigma$ with crossings, and $\mathcal{D}$ is a collection of closed disks, each a subset of $\Sigma$ such that 
\begin{enumerate}[label=\textit{\roman*})]
\item the disks in $\mathcal{D}$ have pairwise disjoint interiors, 
\item the boundary of each disk in $\mathcal{D}$ intersects $\gamma$ in vertices only, 
\item if $ \Delta_1, \Delta_2\in\mathcal{D}$ are distinct, then $ \Delta_1\cap \Delta_2\subseteq\V{\gamma},$ and
\item every edge of $\gamma$ belongs to the interior of one of the disks in $\mathcal{D}.$ 
\end{enumerate} 
Let $N$ be the set of all vertices of $\gamma$ that do not belong to the interior of any of the disks in $\mathcal{D}.$ 
We refer to the elements of $N$ as the \emph{nodes} of $ \Delta.$ 
If $ \Delta\in\mathcal{D},$ then we refer to the set $ \Delta-N$ as a \emph{cell} of $ \Delta.$ 
We denote the set of nodes of $ \Delta$ by $N( \Delta)$ and the set of cells by $C( \Delta).$ 
For a cell $c\in C( \Delta)$ the set of nodes that belong to the closure of $c$ is denoted by $\widetilde{c}.$ 
For a cell $c\in C( \Delta)$ we define $ \sigma_{ \Delta}(c),$ or $ \sigma(c)$ if $ \Delta$ is clear from the context, to be the subgraph of $G$ consisting of all vertices and edges drawn in the closure of $c.$ 
 
We define $\pi_{ \Delta}\colon N( \Delta)\rightarrow\V{G}$ to be the mapping that assigns to every node in $N( \Delta)$ the corresponding vertex of $G.$

For illustrations of $\Sigma$-decompositions consider \autoref{@completeness} and \autoref{fig_sigma_dec}.
In both figures the vertices in magenta and black are the nodes while the grey vertices are drawn in the interiors of cells which means they either sit behind $h$-clique sums for $h\leq 3$ or belong to a vortex.
The cells are depicted as blue shapes and every edge of $G$, including those between nodes, is drawn within a cell.
 
Isomorphisms between two $\Sigma$-decompositions are defined in the natural way. That is given $\Sigma$-decompositions $ \Delta=(\gamma,\mathcal{D})$ and $ \Delta'=(\gamma',\mathcal{D}'),$ the drawing $\gamma=(U,V,E)$ is mapped to a drawing $\gamma'=(U',V',E')$ where the elements of $V$ are in bijection with the elements of $V',$ similarly for $E$ and $E'$ such that the map between the corresponding graphs is a graph isomorphism, and the elements of $\mathcal{D}$ are mapped to the disks in $\mathcal{D}'$ while agreeing with the map between $\gamma$ and $\gamma'.$
\end{definition}

Notice that, in the definition of a $\Sigma$-decomposition $ \Delta,$ the cells $c$ of $ \Delta$ with $\widetilde{c}\neq\emptyset$ correspond to the hyperedges of a hypergraph with vertex set $N( \Delta)$ where $\widetilde{c}$ is the set of vertices incident with $c.$
Moreover, this hypergraph can be embedded in $\Sigma$ such that hyperedges only intersect in common vertices and all vertices are drawn on the boundaries of their hyperedges.
As examples of this consider \autoref{@completeness} and \autoref{fig_sigma_dec}.
In both figures, the \textcolor{CornflowerBlue}{blue} areas mark the cells which become the hyperedges of some hypergraph.
Please note that, in order to not overload the figures, the graphs in the interiors of the cells are chosen to be somewhat minimal with the property that, together with the remaining planar part of the graph, they form an obstruction to planarity.
In general no such restriction exists, the graphs within cells can also be planar or arbitrarily complex.

\begin{definition}[Vortex]\label{@definition}
Let $G$ be a graph, $\Sigma$ be a surface and $ \Delta=(\gamma,\mathcal{D})$ be a $\Sigma$-decomposition of $G.$
A cell $c\in C( \Delta)$ is called a \emph{vortex} if $\Abs{\widetilde{c}}\geq 4.$
Moreover, we call $ \Delta$ \emph{vortex-free} if no cell in $C( \Delta)$ is a vortex.
\end{definition}

See \autoref{fig_sigma_dec} for an illustration of a part of some $\Sigma$-decomposition which includes a vortex.

The reason why the threshold for the boundary size of a vortex is four  lies hidden in the \textit{Two Paths Theorem}  which we present below (\autoref{@postscripts}). Whenever the society defined by the boundary of a cell has at most three vertices, it is impossible to have a cross. This means, in particular, that, if a $\Sigma$-decomposition $ \Delta$ of a graph $G$ has no vortex, one could forget about the interiors of the cells of $ \Delta,$ for each cell of $ \Delta$ transform the vertices drawn on its boundary into a clique, and thereby obtain a graph on the vertex set $\pi(N( \Delta))$ which is drawn in $\Sigma$ without crossings.
This is, roughly, the intuition how $\Sigma$-decompositions encode a torso.

\begin{definition}[Rendition]\label{@contradicted}
Let $(G,\Omega)$ be a society, and let $\Sigma$ be a surface with one boundary component $B.$
A \emph{rendition} in $\Sigma$ of $G$ is a $\Sigma$-decomposition $\rho$ of $G$ such that the image under $\pi_{\rho}$ of $N(\rho)\cap B$ is $\V{\Omega}$ and $\Omega$ is one of the two cyclic orderings of $V(\Omega)$ defined by the way the points of $\pi_{\delta}(V(\Omega))$ are arranged in the boundary $Β$.
\end{definition}

\begin{figure}[h] 
    \begin{center} 
    \scalebox{0.8}{\includegraphics{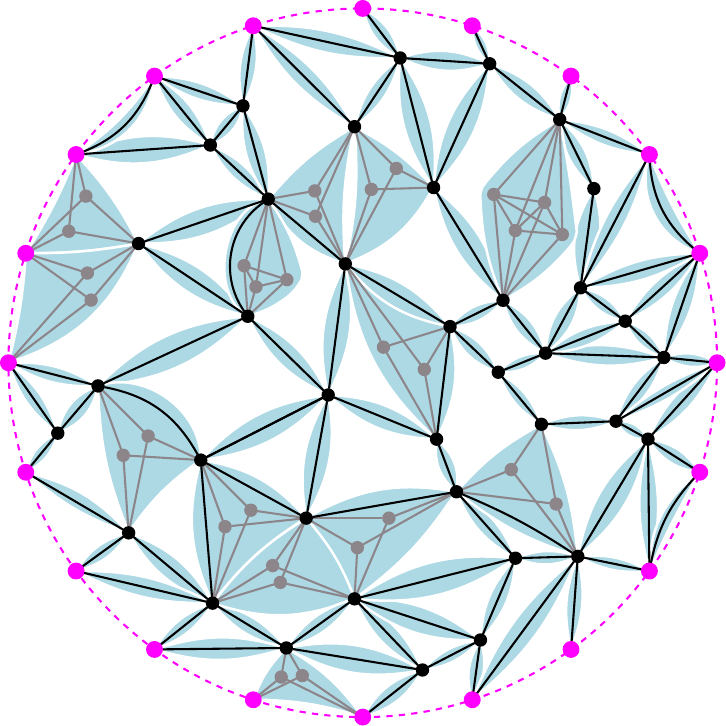}}
    \end{center} 
    \caption{A vortex-free rendition of a society $(G,\Omega)$ in the disk.} 
    \label{@completeness} 
\end{figure}

These technical definitions allow us the state the Two Paths Theorem in the general context of the Graph Minors Structure Theorem as follows.
For an illustration of a vortex-free rendition and the absence of a cross on a society $(G,\Omega),$ even when $G$ is non-planar, see \autoref{@completeness}.

\begin{proposition}[Two Paths Theorem, \cite{jung1970verallgemeinerung,seymour1980disjoint,shiloach1980polynomial,thomassen19802,robertson1990graph2}]\label{@postscripts}
A society $(G,\Omega)$ has no cross if and only if it has a vortex-free rendition in a disk.
\end{proposition}

\paragraph{The Flat Wall Theorem.}\label{@participant}

This section is dedicated to a weaker version of the structure theorem for $K_t$-minor-free graphs known as the \emph{Flat Wall Theorem} \cite{robertson1995graph,KawarabayashiTW18anew}.
While we do not need the Flat Wall Theorem directly in this paper, it is useful to keep in mind as it is the primary source for the infrastructure we need to construct our shallow vortex grid after processing the vortices of a $\Sigma$-decomposition.
Indeed, it acts as the base of the construction of a rendition with a bounded number of bounded depth vortices for any $K_t$-minor-free graph of large treewidth and, as such, it is needed for the statements of the slightly altered versions of lemmas and theorems that we extract from \cite{kawarabayashiTW21quickly}.

A \emph{separation} in a graph $G$ is a pair $(A,B)$ of vertex sets such that $A\cup B=\V{G}$ and there is no edge in $G$ with one endpoint in $A\setminus B$ and the other in $B\setminus A.$
The \emph{order} of $(A,B)$ is $\Abs{A\cap B}.$

An \emph{$(n\times m)$-grid} is the graph $G_{n,m}$ which is the product of a path $P= u_1,u_2,\dots,u_n$ with $n$ vertices and a path $Q=v_1,v_2,\dots,v_m$ with $m$ vertices.
We call the copies of $Q$ in $G_{n,m}$ the \emph{rows} and the copies of $P$ the \emph{columns}.
If $L$ is a row of the form $(u_i,v_1),(u_i,v_2),\dots,(u_n,v_m)$ we call it the \emph{$i$th row} and for $j\in[m]$ we say that $(u_i,v_j)$ is the \emph{$j$th vertex of the $i$th row} while the edge $\{(u_i,v_j)(u_i,v_{j+1})\}$ is the \emph{$j$th edge of the $i$th row}.
For columns we define analogous terminology.
An \emph{elementary $k$-wall}, $k\geq 3,$ is obtained from a $(k\times 2k)$-grid by deleting every odd edge in every odd column and every even edge in every even column.
An elementary $k$-wall $W$ has a unique face whose boundary contains more than six vertices.
The \emph{perimeter} of an elementary $k$-wall is defined to be the subgraph of $W$ induced by all vertices that lie on the unique face with more than six vertices.
A \emph{$k$-wall} $W'$ is a subdivision of an elementary $k$-wall $W.$ In other words, 
$W'$ is obtained by the $k$-wall $W$ after subdividing each edge of $W$ an arbitrary (possibly zero) number of times.
The \emph{perimeter} of $W',$ denoted by $\Perimeter{W'},$ is the subgraph of $W'$ induced by the vertices of the perimeter of $W$ together with the subdivision vertices of the edges of the perimeter of $W.$

\begin{figure}[h] 
\begin{center} 
\scalebox{0.13}{\includegraphics{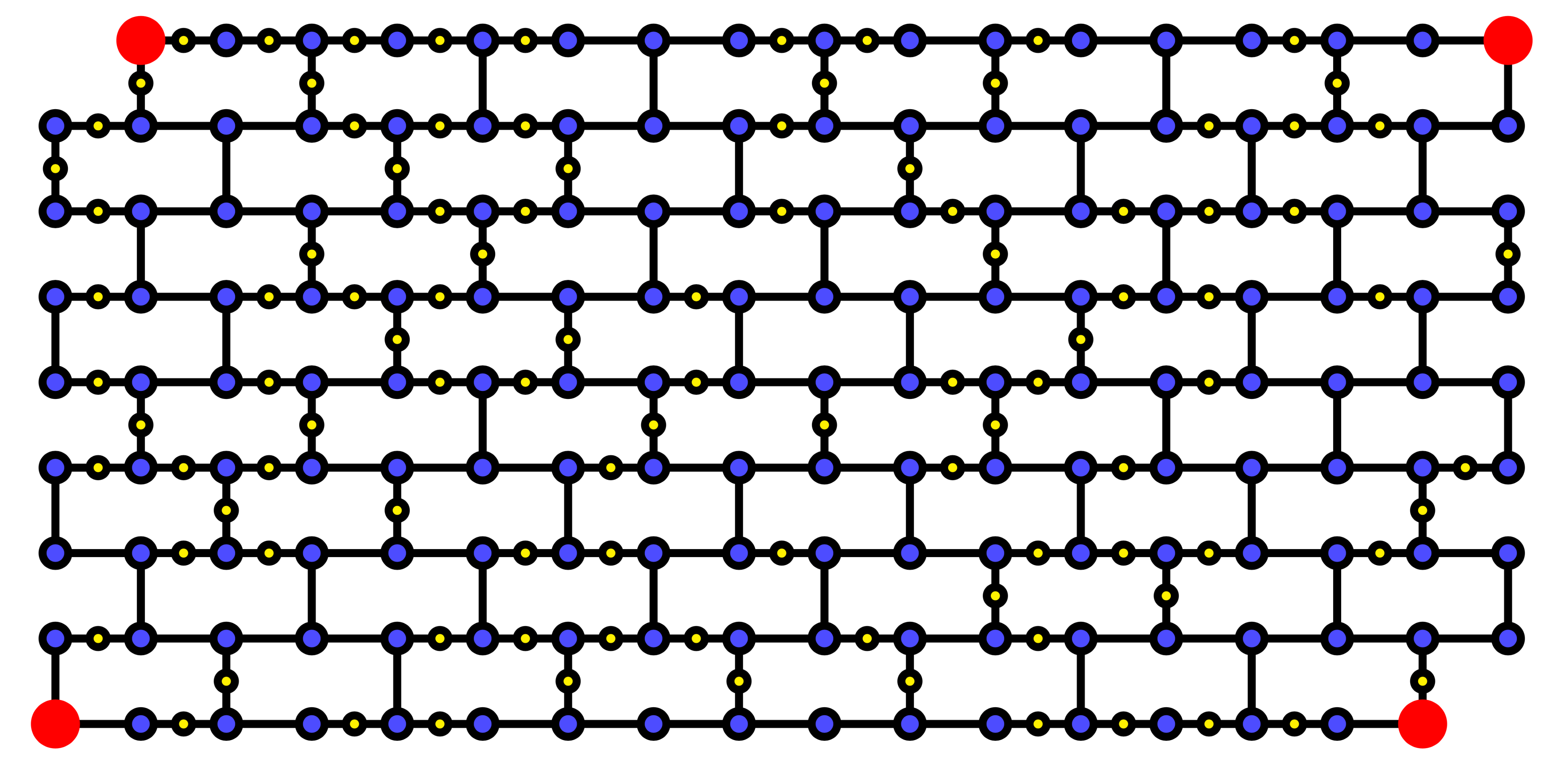}}
\end{center} 
\caption{A $9$-wall. The subdivision vertices are in yellow and the four corners are in red.} 
\label{@terminologie} 
\end{figure}

Let $G$ be a graph and $W⊆ G$ be a wall (here $\subseteq$ denotes the subgraph relation).
The \emph{compass} of $W$ in $G,$ denoted by $\CompassG{G}{W},$ is the subgraph of $G$ induced by the vertices of $\Perimeter{W}$ together with the vertices of the unique component of $G-\Perimeter{W}$ that contains $W-\Perimeter{W}.$
The \emph{corners} of an elementary $r$-wall $W$ are first and the last vertices of the first and the last row respectively.
Notice that if $W$ is an $r$-wall its corners are not uniquely determined.
However, we assume to always be given some choice of corners which we identify with the names from above
(see for instance the red vertices in \autoref{@terminologie}). 
The \emph{corner society} of an $r$-wall $W$ is $(\CompassG{G}{W},\Omega_W)$ where $\Omega_W$ is a cyclic ordering of the corners that agrees with a cyclic ordering of $\Perimeter{W}.$ 

\begin{definition}[Flat Wall]\label{@providence}
Let $r\geq 2$ be an integer.
Let $G$ be a graph and $W$ be an $r$-wall in $G.$
We say that $W$ is \emph{flat} if $(\CompassG{G}{W},\Omega_W)$ has a vortex-free rendition in the disk.
\end{definition}

We wish to stress that this definition is not exactly the definition of flatness used in \cite{kawarabayashiTW21quickly} and \cite{KawarabayashiTW18anew}.
However, if $W$ is a flat wall in a graph $G,$ then $W-\Perimeter{W}$ can be seen to satisfy the stronger requirements for the flatness in \cite{kawarabayashiTW21quickly,KawarabayashiTW18anew} and any wall which is flat in this stronger sense also must be flat in our sense.

To state the Flat Wall Theorem in the terminology of  \cite{kawarabayashiTW21quickly,KawarabayashiTW18anew} we need to define what it means for a minor to be attached to the infrastructure provided by a given wall.

Let $G$ and $H$ be graphs.
If $H$ is a minor of $G,$ then it is possible to find connected subgraphs $X_v$ of $G$ for each $v\in\V{H}$ such that $X_u\cap X_v=\emptyset$ if $u\neq v,$ and if $uv\in\E{H},$ there exists an edge $e$ in $G$ with one end in $X_u$ and the other in $X_v.$
We say that $\CondSet{X_v}{v\in\V{H}}$ for a \emph{minor model} (or simply a \emph{model\,}) of $H$ in $G.$

Let $W$ be a wall in $G.$
We say that $W$ \emph{grasps an $H$ minor} if there exists a model $\CondSet{X_v}{v\in\V{H}}$ of $H$ in $G$ together with indices $i_v,j_v$ for each $v\in\V{H}$ such that $X_v$ meet the intersection of the $i_v$th row and $j_v$th column of $W.$

\begin{proposition}[Flat Wall Theorem \cite{robertson1995graph,KawarabayashiTW18anew}] 
Let $r,t\geq 1$ be integers, $R\coloneqq  49152t^{24}(40t^2+r),$ let $G$ be a graph and let $W$ be an $R$-wall in $G.$ 
Then either $G$ has a \hyperref[@attachements]{model} of a $K_t$-minor grasped by $W,$ or there exist a set $A\subseteq\V{G}$ of size at most $12288t^{24}$ and an $r$-subwall $W'\subseteq W-A$ which is flat in $G-A.$
\end{proposition}

\paragraph{Paths, Transactions, and Societies of Bounded Depth.}\label{@competition}
In order to be able to apply our refinement strategy to a given vortex together with its infrastructure provided by the refined version of the GMST  in \cite{kawarabayashiTW21quickly}, we require a particular situation for that infrastructure itself.
To achieve this situation we will first apply a preprocessing step which allows us to ``push'' the infrastructure as close to the actual vortex as possible.
For this, we need some additional terminology to handle large linkages between segments of a society.

If $P$ is a path and $x$ and $y$ are vertices on $P,$ we denote by $xPy$ the subpath of $P$ with endpoints $x$ and $y.$
Moreover, if $s$ and $t$ are the endpoints of $P$ and we have fixed an order of the vertices of $P,$ say $s$ is the first and $t$ the last vertex, then $xP$ denotes the path $xPt$ and $Px$ denotes the path $sPx.$
Let $P$ be a path from $s$ to $t$ and $Q$ be a path from $q$ to $p.$
If $x$ is a vertex in $\V{P}\cap\V{Q},$ then $PxQ$ is the path obtained from the union of $Px$ and $xQ.$
Let $X,Y\subseteq\V{G}.$
An \emph{$X$-path} is a path of length at least one with both endpoints in $X$ and internally disjoint from $X.$
In a society $(G,\Omega),$ we write $\Omega$-path as a shorthand for a $\V{\Omega}$-path.
A path is an \emph{$X$-$Y$-path} if it has one endpoint in $X$ and the other in $Y.$
Whenever we consider $X$-$Y$-paths we implicitly assume them to be ordered starting in $X$ and ending in $Y,$ expect if stated otherwise.

Let $(G,\Omega)$ be a society.
A \emph{segment} of $\Omega$ is a set $S\subseteq\V{\Omega}$ such that there do not exist $s_1,s_2\in S$ and $t_1,t_2\in\V{\Omega}\setminus S$ such that $s_1,t_1,s_2,t_2$ occur in $\Omega$ in the order listed, i.\@e.\@,  vertices of $V(\Omega)$ that appear consecutively in $\Omega.$
A vertex $s\in S$ is an \emph{endpoint} of the segment $S$ if there is a vertex $t\in\V{\Omega}\setminus S$ which immediately precedes or immediately succeeds $s$ in the order $\Omega.$
For vertices $s,t\in\V{\Omega}$ we denote by $s\Omega t$ the uniquely determined segment with first vertex $s$ and last vertex $t.$
In case $t$ immediately precedes $s,$ we define $s\Omega t$ to be the \emph{trivial segment} $\V{\Omega}.$

Let $G$ be a graph.
A \emph{linkage} in $G$ is a set of pairwise vertex disjoint paths.
In slight abuse of notation, if $\mathcal{L}$ is a linkage, we use $\V{\mathcal{L}}$ and $\E{\mathcal{L}}$ to denote $\bigcup_{L\in\mathcal{L}}\V{L}$ and $\bigcup_{L\in\mathcal{L}}\E{L}$ respectively.
Given two sets $A$ and $B$ we say that a linkage $\mathcal{L}$ is a \emph{$A$-$B$}-linkage if every path in $\mathcal{L}$ has one endpoint in $A$ and one endpoint in $B.$

Let $(G,\Omega)$ be a society. 
A \emph{transaction} in $(G,\Omega)$ is a linkage $\mathcal{L}$ of $\Omega$-paths in $G$ such that there exist disjoint segments $A,B$ of $\Omega$ where the paths in $\mathcal{L}$ are $A$-$B$-paths. 
We define the \emph{depth} of $(G,\Omega)$ as the maximum cardinality of a transaction in $(G,\Omega).$

Let $\mathcal{T}$ be a transaction in a society $(G,\Omega).$ 
We say that $\mathcal{T}$ is \emph{planar} if no two members of $\mathcal{T}$ form a cross in $(G,\Omega).$ 
An element $P\in\mathcal{T}$ is \emph{peripheral} if there exists a segment $X$ of $\Omega$ containing both endpoints of $P$ and no endpoint of another path in $\mathcal{T}.$ 
A transaction is \emph{crooked} if it has no peripheral element.

Finally we will need the following proposition.

\begin{proposition}[\!\!\cite{kawarabayashiTW21quickly}]\label{@thoroughly} 
Let $(G,\Omega)$ be a society and $p\geq 1,$ $q\geq 2$ positive integers. 
Let $\mathcal{P}$ be a transaction in $(G,\Omega)$ of order $p+q-2.$ 
Then there exists $\mathcal{P}'\subseteq\mathcal{P}$ such that $\mathcal{P}'$ is either a planar transaction of order $p$ or a crooked transaction of order $q.$
\end{proposition}

\paragraph{The Graph Minors Structure Theorem.}\label{@reproduces}

Next we present two different statements, both fit to capture the global structure of $H$-minor-free graphs.
The first one focusses on the structure relative to a wall and thus can be seen as a local extension of the \hyperref[thm_flatwall]{Flat Wall Theorem}, hence we call this one the \emph{Local Structure Theorem}.
The second one is the Graph Minors Structure Theorem that completely describes the structure of $H$-minor-free graphs in terms of graphs of bounded Euler-genus with a bounded number of bounded depth vortices, clique sums and apex vertices.
The corresponding graph parameter is asymptotically equivalent to $\mathsf{p}_{\mbox{\scriptsize \texttt{vga}}}$ and represents the global maximum of the $\texttt{vga}$-hierarchy (see \autoref{@assessment}).

\begin{definition}[Vortex Societies and Breadth and Depth of a $\Sigma$-Decomposition]\label{@phenomenon}
Let $\Sigma$ be a surface and $G$ be a graph.
Let $ \Delta=(\gamma,\mathcal{D})$ be a $\Sigma$-decomposition of $G.$
Every vortex $c$ defines a society $( \sigma(c),\Omega),$ called the \emph{vortex society} of $c,$ by saying that $\Omega$ consists of the vertices $\pi_{ \Delta}(n)$ for $n\in\widetilde{c}$ in the order given by $\gamma.$
(There are two possible choices of $\Omega,$ namely $\Omega$ and its reversal. Either choice gives a valid vortex society.).
The \emph{breadth} of $ \Delta$ is the number of cells $c\in C( \Delta)$ which are a vortex and the \emph{depth} of $ \Delta$ is the maximum depth of the vortex societies $( \sigma(c),\Omega)$ over all vortex cells $c\in C( \Delta).$
\end{definition}

Next we need to combine our definition of flat walls with the idea of $\Sigma$-decompositions.
This is a necessary step so to be able to relate flat renditions of walls to the drawings provided by $\Sigma$-decompositions and thus to impose additional structure onto these drawings.

Let $G$ be a graph and $W$ be a wall in $G$.
We say that $W$ is \emph{flat in a $\Sigma$-decomposition $ \Delta$ of $G$} if there exists a closed disk $ \Delta\subseteq\Sigma$ such that
\begin{itemize} 
\item the boundary of $ \Delta$ does not intersect any cell of $ \Delta,$ 
\item $\pi(N(\delta))\cap\Boundary{\Delta}\subseteq V(\Perimeter{W})$
\item for each degree-three vertex $v$ of $W$ such that $v$ is not mapped to a member of $N( \Delta)$ by $\pi,$ let $c_v\in C( \Delta)$ be the cell with $v\in V( \sigma(c_v)).$ Then, for all pairs of such distinct degree-three vertices $u$, $v$ of $W$ the cells $c_u$ and $c_v$ are disjoint and $c_v\subseteq \Delta$,
\item no cell $c\in C( \Delta)$ with $c\subseteq \Delta$ is a vortex, and 
\item $W-\V{\Perimeter{W}}$ is a subgraph of $\bigcup\CondSet{ \sigma(c)}{c\subseteq \Delta}.$
\end{itemize}

Let $G$ be a graph, let $r\geq 1$ be an integer, let $W'$ be an $r+2$-wall in $G,$ and let $W\subseteq W'$ be the unique $r$-subwall of $W'$ disjoint from its perimeter. 
If $(A,B)$ is a separation of $G$ of order at most $r-1,$ then exactly one of the sets $A\setminus B$ and $B\setminus A$ includes the vertex set of a column and a row of $W.$ 
If it is the set $A,$ we say that $A$ is the \emph{$W$-majority side of the separation $(A,B)$}; otherwise, we say that $B$ is the $W$-majority side.
For those readers familiar with the concept of tangles \cite{RobertsonS91X} (see \autoref{@explanation}), the orientation of all separations of order at most $r-1$ induced by the majority side of a wall is exactly the tangle induced by the wall.

To apply the intuition of this ``orientation'' of separations to $\Sigma$-decompositions recall that the boundary of every non-vortex cell contains at most three vertices.
Moreover, these boundaries define separations between the part of $G$ which is drawn in the interior of a single cell and the part of $G$ which is drawn outside.
To link a fixed wall $W$ in a graph $G$ with a $\Sigma$-decomposition we make use of this observation as follows.
Let $\Sigma$ be a surface and $ \Delta=(\gamma,\mathcal{D})$ be a $\Sigma$-decomposition of $G.$ 
We say that $ \Delta$ is \emph{$W$-central} if there is no cell $c\in C( \Delta)$ such that $\V{ \sigma(c)}$ includes the $W$-majority side of a separation of $G$ of order at most $r-1.$ 
Similarly, let $Z\subseteq\V{G},$ $\Abs{Z}\leq r-1,$ let $\Sigma'$ be a surface and $ \Delta'$ be a $\Sigma'$-decomposition of $G-Z.$ 
Then $ \Delta'$ is a $W$-central decomposition of $G-Z$ is for all separations $(A,B)$ of order at most $r-\Abs{Z}-1$ such that $B\cup Z$ is the majority side of the separation $(A\cup Z,Z\cup B)$ of $G,$ there is no cell $c\in C( \Delta')$ such that $\V{ \sigma_{ \Delta'}(c)}$ contains $B.$

\begin{proposition}[Local Structure Theorem \cite{RobertsonS03a,kawarabayashiTW21quickly}]\label{@artificially} 
Let $r,p\geq 0$ be integers, and let $R\coloneqq  49152p^{24}r+p^{10^7p^{26}}.$ 
Let $G$ be a graph and let $W$ be an $R$-wall in $G.$ 
Then either $G$ has a \hyperref[@attachements]{model} of a $K_p$-minor grasped by $W,$ or there exists a set $A\subseteq\V{G}$ of size at most $p^{10^7p^{26}},$ a surface $\Sigma$ of Euler-genus at most $p(p+1),$ a $W$-central $\Sigma$-decomposition $ \Delta$ of $G-A$ of depth at most $p^{10^7p^{26}}$ and breadth at most $2p^2,$ and an $r$-subwall $W'\subseteq W-A$ which is flat in $ \Delta.$
\end{proposition}

From the local structure theorem, a global version can be derived.
The way this is usually done is following a balanced separator argument.
That is, one fixes a small set of vertices $X$ and either finds a small balanced separator for it, which allows to continue the construction in each of the resulting components, or no such separator exists.
In the second case, $X$ witnesses large treewidth which allows one to deduce the existence of a big wall and thus, \autoref{@artificially} can be applied.
See the proof of \autoref{@diskussion} for a variant of this proof in full detail.

An \emph{$\alpha $-near embedding} of a graph $G$ in a surface $\Sigma$ of \emph{depth} $k$ and \emph{breadth} $t$ is a pair $( \Delta,A)$ such that $A\subseteq\V{G},$ $\Abs{A}\leq\alpha ,$ and $ \Delta$ is a $\Sigma$-decomposition of $G-A$ of depth\footnote{Please note that in \cite{kawarabayashiTW21quickly} there is a difference between the ``depth'' of a $\Sigma$-decomposition and the ``width'' of an $\alpha $-near embedding. This difference arises from the fact that after resolving the clique sums, each vortex can be decomposed into a path decomposition of bounded width. This width however is related to the depth of the vortex by a small constant factor.} $k$ and breadth $t$ such that for every $c\in C( \Delta)$ which is not a vortex, $\V{ \sigma(c)}$ induces a clique in $G$ and $\V{ \sigma(c)}\subseteq\pi(N( \Delta)).$

\begin{proposition}[Global Structure Theorem \cite{RobertsonS03a,kawarabayashiTW21quickly}]\label{@guarantors} 
There exists a constant $c$ that satisfies the following. 
Let $p\geq 1$ be a positive integer and let $G$ be a graph which does not contain $K_p$ as a minor. 
Let $\alpha \coloneqq  p^{18\cdot 10^7p^{26}+c}.$ 
Then $G$ has a tree decomposition $(T,\beta )$ of adhesion at most $4\alpha $ such that for all $t\in\V{T},$ if $G'$ is the torso of $G$ at $t$ then $G'$ has an $\alpha $-near embedding of breadth at most $2p^2$ and depth at most $\alpha $ in a surface of Euler-genus at most $p(p+1).$
\end{proposition}

\subsection{A refined version of \autoref{@artificially}}\label{@abbreviated}

The next step is to derive slightly refined versions of \autoref{@artificially}.
Towards this goal we first introduce additional definitions from \cite{kawarabayashiTW21quickly} and then describe in some detail how the refined versions follow from the proofs in \cite{kawarabayashiTW21quickly}.
A similar variant as the one we state below has been proven by Diestel, Kawarabyashi, M\"uller, and Wollan in \cite{diestel2012excluded}.
While their theorem exposes the vortices and the corresponding infrastructure in a similar way, for our purposes \autoref{@mouthpiece} below is more convenient to work with.
One reason for this is that the new statement is better suited to be incorportated into the balanced separator argument we touched upon above.

\paragraph{Societies and Nests.}

The proof of \autoref{@artificially} in \cite{kawarabayashiTW21quickly} is based on the systematic study of societies.
We start by introducing further definitions.

\begin{definition}[Cylindrical Rendition]\label{@positively} 
Let $(G,\Omega)$ be a society, $\rho=(\gamma,\mathcal{D})$ be a rendition of $(G,\Omega)$ in a disk, and let $c_0\in C(\rho)$ be such that no cell in $C(\rho)\setminus\Set{c_0}$ is a vortex. 
In those circumstances we say that the triple $(\gamma,\mathcal{D},c_0)$ is a \emph{cylindrical rendition} of $(G,\Omega)$ around $c_{0}.$
\end{definition}

Let $\rho=(\gamma,\mathcal{D})$ be a rendition of a society $(G,\Omega)$ in a surface $\Sigma.$
For every cell $c\in C(\rho)$ with $\Abs{\widetilde{c}}=2,$ we select one of the components of $\Boundary{c}-\widetilde{c}.$
This selection will be called a \emph{tie-breaker in $\rho$}, and we will assume that every rendition comes equipped with a tie-breaker.
Let $Q$ be either a cycle or a path in $G$ that uses no edge of $ \sigma(c)$ for every vortex $c\in C(\rho).$
We say that $Q$ is \emph{grounded} in $\rho$ if either $Q$ is a non-zero length path with both endpoints in $\pi_{\rho}(N(\rho)),$ or $Q$ is a cycle and it uses edges of $ \sigma(c_1)$ and $ \sigma(c_2)$ for two distinct cells $c_1,c_2\in C(\rho).$
If $Q$ is grounded we define the \emph{trace} of $Q$ as follows.
Let $P_1,\dots,P_k$ be distinct maximal subpaths of $Q$ such that $P_i$ is a subgraph of $ \sigma(c)$ for some cell $c.$
Fix an index $i.$
The maximality of $P_i$ implies that its endpoints are $\pi(n_1)$ and $\pi(n_2)$ for distinct nodes $n_1,n_2\in N(\rho).$
If $\Abs{\widetilde{c}}=2,$ define $L_i$ to be the component of $\Boundary{c}-\Set{n_1,n_2}$ selected by the tie-breaker, and if $\Abs{\widetilde{c}}=3,$ define $L_i$ to be the component of $\Boundary{c}-\Set{n_1,n_2}$ that is disjoint from $\widetilde{c}.$
Finally, we define $L_i'$ by pushing $L_i$ slightly so that it is disjoint from all cells in $C(\rho).$
We define such a curve $L_i'$ for all $i,$ maintaining that the curves intersect only at a common endpoint.
The \emph{trace} of $Q$ is defined to be $\bigcup_{i\in[k]}L_i'.$
So the trace of a cycle is the homeomorphic image of the unit circle, and the trace of a path is an arc in $ \Delta$ with both endpoints in $N(\rho).$

See \autoref{fig_sigma_dec} for an illustration of some part of a $\Sigma$-decomposition.
In this figure we have marked three cycles together with their traces which are closed curves, depicted as dashed red lines.
The same figure also illustrates the idea of the tie-breaker as one can see that the traces of the three cycles may alter between ``inside'' and ``outside'' the actual drawing of their respective cycle.

\begin{definition}[Nest]\label{@evangelistic} 
Let $\rho=(\gamma,\mathcal{D})$ be a rendition of a society $(G,\Omega)$ in a surface $ \Sigma$ and let $ \Delta\subseteq \Sigma$ be an arcwise connected set\footnote{That is, for every pair of points $x$ and $y$ in $ \Delta$ there exists a curve $\zeta$ which lies completely in $ \Delta$ and which contains $x$ and $y.$}.
A \emph{nest in $\rho$ around $ \Delta$ of order $s$} is a sequence $\mathcal{C}=(C_1,C_2,\dots,C_s)$ of disjoint cycles in $G$ such that each of them is grounded in $\rho,$ and the trace of $C_i$ bounds a closed disk $ \Delta_i$ in such a way that $ \Delta\subseteq \Delta_1\subsetneq \Delta_2\subsetneq\dots\subsetneq \Delta_s\subseteq \Sigma.$
\end{definition}

For an simple example of a {cylindrical rendition} one may readily consider
as the shallow vortex grid $\mathscr{S}_{6}$ depicted
in  \autoref{@industrial}. The ``outer face'' of the $(6,24)$-cylindrical grid of $\mathscr{S}_{6}$ 
is defining a cyclic ordering $\Omega$ of the vertices in its boundary.
The closure of the face where the crossings are drawn, is the (unique) 
vortex cell $c_0$ of the cylindrical rendition $\rho,$ where 
other cells correspond to the edges of the $(6,24)$-cylindrical grid
around $c_{0}.$ The disjoint cycles of this cylindrical grid are the cycles 
of a nest in $\rho$ around $c_0$ of order six.
The depth of the society defined by the cell $c_0$ is two.
Notice that this is essentially the minimum possible depth of a vortex as the society defined by a vortex of depth less than two has already a vortex-free rendition in a disk by the Two Paths Theorem (\autoref{@postscripts}).
This justifies the term ``shallow'' in the term shallow vortex grid that we use.

A more involved example of a nest can be found in \autoref{fig_sigma_dec} where we depict a nest of order three around the vortex cell $c_0$.

\begin{figure}[t]
\vspace{-0mm}
    \begin{center} 
    \scalebox{0.625}{\includegraphics{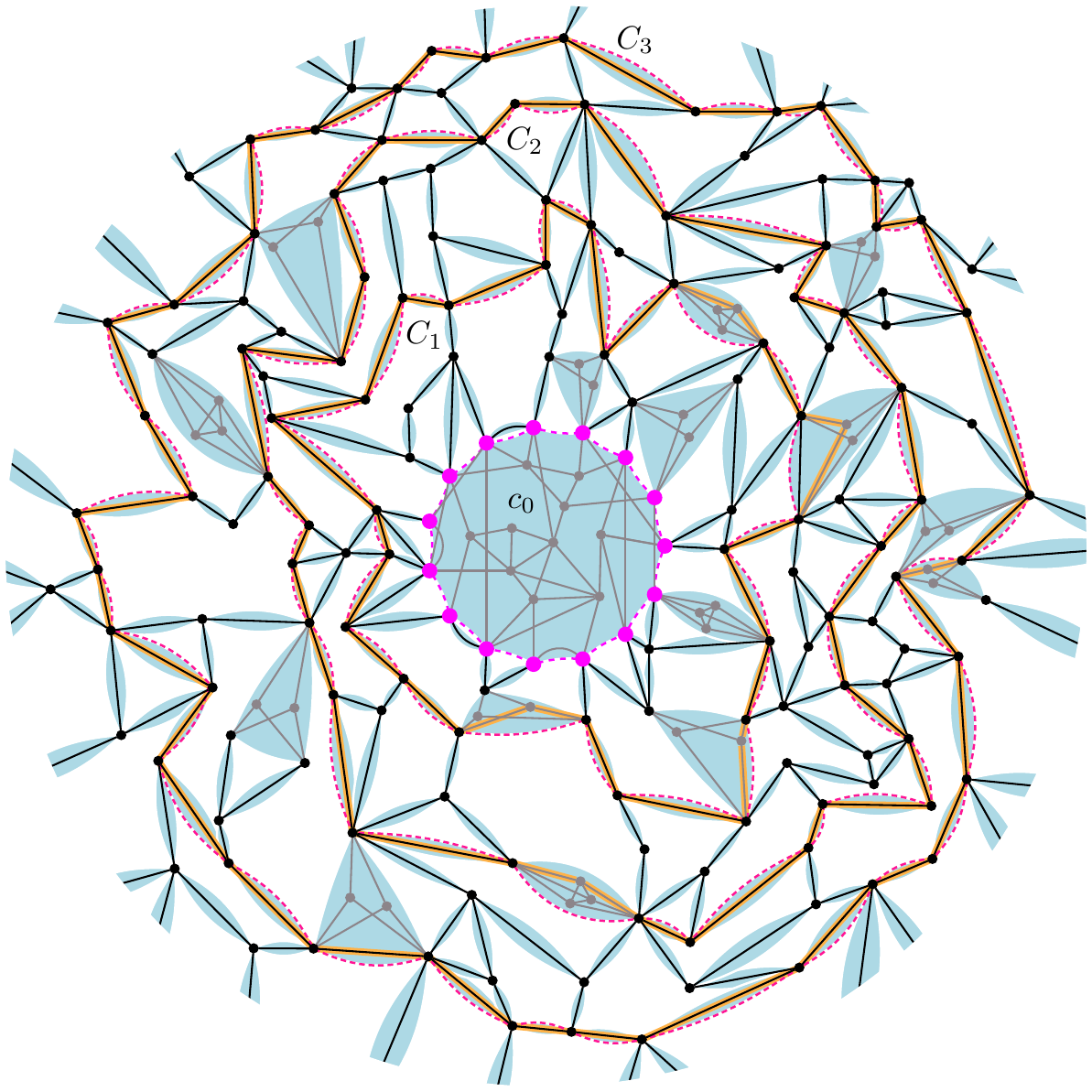}}
    \end{center} 
    \vspace{-4mm}
    \caption{A partial depiction of a $\Sigma$-decomposition focussing on a disk around a single vortex cell $c_0$ together with a nest of order three.
    Each of the three cycles $C_1$, $C_2$, and $C_3$ of the nest comes with its trace (the dashed \red{red} lines ``mirroring'' the behavior of the cycles on the boundary of the cells).} 
    \label{fig_sigma_dec}
\end{figure}

We are finally ready to state the refined version of \autoref{@artificially}.
The purpose of this refinement is to establish the nest structure around the vortices in \autoref{@artificially} in such a way that we are able to guarantee large nests for all vortices while also keeping these nests disjoint.
A similar statement can be found in \cite{diestel2012excluded} presented there in different terminology without explicit bounds, and with a slightly different definition for the depth of a vortex.

\begin{proposition}[Local Structure Theorem (with vortex infrastructure)\cite{RobertsonS03a,kawarabayashiTW21quickly}]\label{@mouthpiece} 
Let $r,p\geq 0$ be integers, and let $R\coloneqq  49152p^{24}r+p^{10^7p^{26}}.$ 
Let $G$ be a graph and let $W$ be an $R$-wall in $G.$ 
Then either $G$ has a \hyperref[@attachements]{model} of a $K_p$-minor grasped by $W,$ or there exists  
\begin{enumerate}[label=\roman*)]
\item a set $A\subseteq\V{G}$ of size at most $p^{10^7p^{26}},$ 
\item a surface $\Sigma$ of Euler-genus at most $p(p+1),$ 
\item a $W$-central $\Sigma$-decomposition $ \Delta=(\gamma,\mathcal{D})$ of $G-A$ of depth at most $p^{10^7p^{26}}$ and breadth at most $2p^2,$ and
\item an $r$-subwall $W'\subseteq W-A$ which is flat in $ \Delta.$
\end{enumerate}
Moreover, for each vortex cell $c\in C( \Delta),$
\begin{enumerate}[label=\roman*)]
\setcounter{enumi}{4}
\item there exists a nest $\mathcal{C}_c$ of order $10^{21}p^{100}$ in $ \Delta$ around the unique disk $ \Delta\in\mathcal{D}$ corresponding to $c,$ and 
\item let $ \Delta_c\subseteq\Sigma$ be the disk bounded by the trace of $C_{10^{21}p^{100}}\in\mathcal{C}_c,$ then for each pair of distinct vortex cells $c,c'\in C( \Delta)$ we have that $ \Delta_c\cap  \Delta_{c'}=\emptyset.$ 
\end{enumerate}
\end{proposition}

The reason why \autoref{@mouthpiece} is more convenient to work with than the main theorem of \cite{diestel2012excluded} lies in the fact that the latter was formulated for graphs of high treewidth only, while the above theorem only requires the existence of a large wall.
While high treewidth and the existence of large walls are famously asymptotically equivalent, it is much easier to derive a global version for all graphs using \autoref{@mouthpiece}.
Thus, if one is after a global description of graphs excluding a certain minor while maintaining some kind of infrastructure for each vortex, the above theorem is a bit more convenient.

\section{Combinatorial results} 
\label{@desexualized}

In \autoref{@exceptional} we presented all concepts and theorems that are necessary for the proof of our main combinatorial result, that is \autoref{@dialectical}. In this section we prove this result. The direction ${\mathsf{p}}_{\mbox{\scriptsize \texttt{-\!\!\! g\!\!\! a}}}\preceq \mathsf{ p}$ follows from \autoref{@lovelessly}, whose proof spans the first three subsections and the 
proof of ${\mathsf{p}}\preceq \mathsf{p}_{\mbox{\scriptsize \texttt{-\!\!\! g\!\!\! a}}}$ is given in \autoref{@inevitably}.

\subsection{Advancing through a nest}\label{@prejudiced}

In this subsection we are going to further refine \autoref{@mouthpiece}.
Our goal is to describe the structure surrounding a given vortex.

\begin{definition}[Rendition of a Nest]\label{@corresponding}
Let $(G,\Omega)$ be a society with a \hyperref[def_cylindricalrendition]{cylindrical rendition} $\rho=(\gamma,\mathcal{D},c_0)$ and let $ \Delta\in\mathcal{D}$ be the disk corresponding to $c_0.$
Let $\mathcal{C}=\Set{C_1,\dots,C_s}$ be a \hyperref[def_nest]{nest} in $\rho$ around $ \Delta$ of order $s\geq 1.$
We say that $(\rho,G,\Omega)$ is the \emph{rendition of $\mathcal{C}$ around $c_0$ in $\rho$} if $\V{C_s}=\V{\Omega}.$

Suppose $ \Delta=(\gamma',\mathcal{D}')$ is a \hyperref[def_sigmadecomposition]{$\Sigma'$-decomposition} of some graph $G'$ and $c\in C( \Delta)$ is a vortex cell with nest $\mathcal{C}'=\Set{C'_1,\dots,C'_h}$ such that the disk $ \Delta'$ containing $c$ defined by the trace of $C'_h$ does not contain a vortex cell besides $c.$
We denote by $( \Delta_{\mathcal{C}'},H_{\mathcal{C}'},\Omega_{\mathcal{C}'})$ the rendition of $\mathcal{C}$ around $c$ \emph{induced by $ \Delta'$ in $ \Delta$}.
\end{definition}

\begin{definition}[Tight Nests]\label{@subjective}
Let $\theta$ be a positive integer.
Let $(\rho,G,\Omega)$ be a rendition of a nest $\mathcal{C}$ around a vortex $c$ of depth at most $\theta$ in a disk $ \Delta.$
We say that $(\rho,G,\Omega)$ is \emph{$\theta$-tight} if one of the following is true
\begin{enumerate}[label=\textit{\roman*})] 
\item\label{@miraculous} there exists a set $Z\subseteq\V{G}$ such that $\Abs{Z}\leq \theta$ and every $\V{\Omega}$-$\widetilde{c}$-path in $G$ intersects $Z,$ or 
\item\label{@idealities} there exists a disk $ \Delta'\subseteq \Delta$ whose boundary intersects $\gamma$ only in nodes with $X=\Boundary{ \Delta'}\cap N(\rho)$ such that 
\begin{itemize} 
\item {$ \Delta'$ contains $c,$} 
\item there exists a family $\mathcal{P}$ of pairwise disjoint $\V{\Omega}$-$\widetilde{c}$-paths with $\Abs{\mathcal{P}}=\Abs{\pi(X)}\geq \theta,$ and 
\item if $\Omega'$ is an ordering of $\pi(X)$ induced\footnote{This means that $\Omega'$ is either the ordering obtained by following along the boundary of $ \Delta'$ in clockwise order, or its reversal.} by $ \Delta',$ then the society $(\InducedSubgraph{G}{\V{G}\cap  \sigma_{\rho}( \Delta')},\Omega')$ has \hyperref[def_transactionanddepth]{depth} at most $3\theta,$ where $ \sigma_{\rho}( \Delta')$ denotes $\bigcup_{c\in C(\rho),c'\subseteq \Delta'}V( \sigma_{\rho}(c')).$ 
\end{itemize}
\end{enumerate}
\end{definition}

\begin{figure}[h] 
\begin{center} 
\scalebox{.62}{\includegraphics{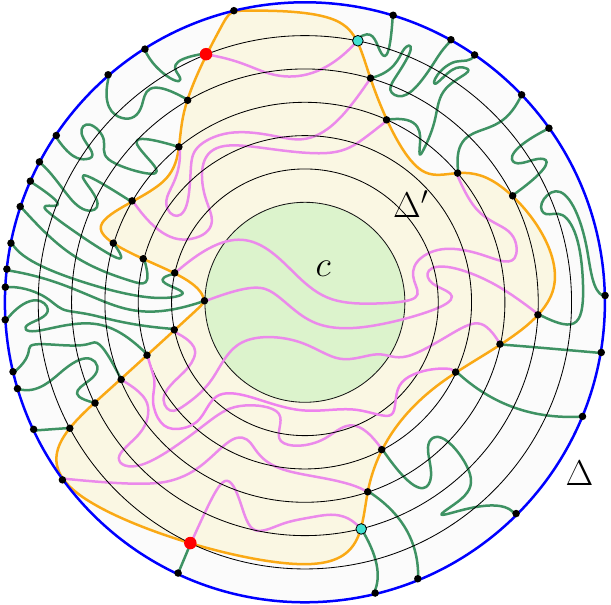}}
\end{center} 
\caption{A visualization of the proof of \autoref{@transformation} which finds a \hyperref[@maidservants]{tight} nest. The \darkgreen{green} paths are the portions of the paths in $\mathcal{P},$ each joining some vertex of $\Omega$ with some vertex of $\Omega'.$ The disk $ \Delta'$ is \darkorange{orange} and, in the first case, the set $Z$ can be seen as the vertices in its boundary. 
In the second case, the graph drawn inside the closed disk $ \Delta',$ along with the ordering of $\Omega',$ should form a society of depth at most $3\theta.$ Towards a contradiction we assume the existence of $3\theta+1$ violet paths forming a linkage between two disjoint segments of $\Omega'$ (the one segment is between the two \red{red} vertices and the other between the two \blue{blue} vertices).} 
\label{@axiomatically} 
\end{figure}

The remainder of this section is dedicated to the proof of the following theorem.

\begin{theorem}[Local Structure Theorem (with tight vortex infrastructure)] \label{@transformation}
Let $r,p\geq 0$ be integers, and let $R\coloneqq  49152p^{24}r+p^{10^7p^{26}}.$ 
Let $G$ be a graph and let $W$ be an $R$-wall in $G.$ 
Then either $G$ has a \hyperref[@attachements]{model} of a $K_p$-minor grasped by $W,$ or the second outcome of \autoref{@mouthpiece} applies along with the following additional condition: 
 
\begin{enumerate}[label=\textit{\roman*})] 
\item[vii)] for each vortex cell $c\in C( \Delta)$ the rendition $( \Delta_{\mathcal{C}_c},H_{\mathcal{C}_c},\Omega_{\mathcal{C}_c})$ of the nest $\mathcal{C}_c$ is $p^{10^7p^{26}}$-\hyperref[@maidservants]{tight}. 
\end{enumerate}
\end{theorem}

Towards a proof of \autoref{@transformation}, we first show a slightly more general lemma that allows us to choose a new nest ``closer'' to the vortex whenever none of the two cases of the definition of a \hyperref[@maidservants]{tight nest} holds.
\autoref{@transformation} will then follow from iterative applications of this lemma.
Moreover, the lemma is constructive in the sense that is only uses some re-routing arguments and Menger's Theorem, hence it can be applied in polynomial time.
Therefore, from the polynomial time algorithm in \cite{kawarabayashiTW21quickly} it follows that the $\Sigma$-decomposition of \autoref{@transformation} can be found in polynomial time.

Let $\rho=(\gamma,\mathcal{D},c_0)$ be a \hyperref[def_cylindricalrendition]{cylindrical rendition} of a society $(G,\Omega)$ and let $\mathcal{C}=\Set{C_1,\dots,C_s}$ be a nest around $c_0$ in $\rho.$
We associate a vector $\mathbf{v}_{\mathcal{C}}\in\Bbb{N}^s$ with $\mathcal{C}$ as follows.
For each $i\in[0,s-1]$ let $\mathbf{v}_i$ be the number of nodes of $\rho$ which are contained in the disk that contains $c_0$ and is bounded by the trace of $C_{i+1}$.
The vector $\mathbf{v}_{\mathcal{C}'}$ is defined analogously.

Let $\mathcal{C}'=\Set{C_1',\dots,C_s'}$ be a nest around $c_0$ in $\rho.$ 
We now write $\mathcal{C}<\mathcal{C'}$ if $\mathbf{v}_{\mathcal{C}}<_{\operatorname{lex}}\mathbf{v}_{\mathcal{C}'},$ that is if $\mathbf{v}_{\mathcal{C}}$ is lexicographically smaller than $\mathbf{v}_{\mathcal{C}'}.$

\begin{lemma}\label{@metaphysician} 
Let $\theta\geq 2$ and $s$ be positive integers with $s\leq \frac{\theta}{2}.$ 
Let $(\rho=(\gamma,\mathcal{D},c_0),G,\Omega)$ be a rendition of the nest $\mathcal{C}$ of order $s$ around the vortex $c_0$ of depth at most $\theta$ in $\rho.$ 
Then either $\mathcal{C}$ is \hyperref[@maidservants]{$\theta$-tight}, or there exists a nest $\mathcal{C}'<\mathcal{C}$ of order $s$ around $c_0$ within $G.$
\end{lemma}

\begin{proof} 
Let $\mathcal{P}$ be a maximum family of pairwise disjoint $\V{\Omega}$-$\widetilde{c_0}$-paths in $G.$ 
If $\Abs{\mathcal{P}}\leq\theta$ then, by Menger's Theorem, there exists a set $Z\subseteq\V{G}$ of size at most $\theta$ which meets all $\V{\Omega}$-$\widetilde{c_0}$-paths in $G$ and thus $(\rho,G,\Omega)$ is  
\hyperref[@maidservants]{$\theta$-tight}. 
 
Hence, we may assume $\Abs{\mathcal{P}}\geq \theta+1.$ 
Again by Menger's Theorem we can find a set $X\subseteq\V{G}$ with $\Abs{X}=\Abs{\mathcal{P}}$ since $\mathcal{P}$ such that $X$ contains a vertex of every path in $\mathcal{P}$ since $\mathcal{P}$ is maximum. 
In particular, we may assume $X\subseteq \pi(N(\rho)),$ that is, no vertex of $X$ is drawn in the interior of a cell. 
In the following, in a slight abuse of notation, we will identify the sets $X$ and $\pi^{-1}(X).$
Since $G-c_0$ has a vortex-free rendition in the disk and there is no $\V{\Omega}$-$\widetilde{c_0}$-path in $G-X-c_0$ we may find a disk $ \Delta$ whose boundary intersects $N(\rho)$ exactly in $X,$ is otherwise disjoint from $\gamma,$ and $c_0\subseteq  \Delta.$ 
Let $\Omega'$ be an ordering of the vertices in $X$ obtained by traversing the boundary of $ \Delta$ in an arbitrarily chosen direction. 
If the society $(\InducedSubgraph{G}{\V{G}\cap \sigma_{\rho}( \Delta)},\Omega')$ has depth at most $3\theta$ we are done since this would mean that $(\rho,G,\Omega)$ is \hyperref[@maidservants]{$\theta$-tight}. 
 
Hence, we may assume that there exist disjoint segments $I_1'$ and $I_2'$ of $\Omega'$ and a family $\mathcal{L}$ of at least $3\theta+1$ pairwise disjoint $I_1'$-$I_2'$-paths in $G'\coloneqq  \InducedSubgraph{G}{\V{G}\cap  \sigma_{\rho}( \Delta)}$ (see \autoref{@axiomatically}). 
Suppose $\mathcal{L}$ contains a crooked transaction of order $\theta+1.$ 
This means that we can select subpaths of some paths in $\mathcal{L}$ to obtain a \hyperref[def_crooked]{crooked transaction} on the \hyperref[def_vortexsociety]{vortex society} $( \sigma(c_0),\Omega_0)$ and therefore contradicts the assumption that $c_0$ is of depth at most $\theta.$ 
Hence, by \autoref{@thoroughly}, $\mathcal{L}$ contains a planar transaction $\mathcal{L}_1$ of order at least $2\theta+1.$ 
Moreover, if $\theta+1$ paths of $\mathcal{L}_1$ contain an edge of $ \sigma(c_0),$ then we can find a transaction of order $\theta+1$ in $( \sigma(c_0),\Omega_0).$ 
Hence, there exists a family $\mathcal{L}_2\subseteq\mathcal{L}_1$ of size at least $\theta+1$ which does not contain an edge of $ \sigma(c_0)$ nor a vertex of $ \sigma(c_0-\widetilde{c_0}).$ 
Note that $\mathcal{L}_2$ must be a planar transaction on $(\InducedSubgraph{G}{\V{G}\cap V( \sigma( \Delta))},\Omega')$ since $G-c_0$ has a vortex-free rendition in the disk. 
Moreover, using the paths from $\mathcal{P}$ and the fact that each vertex of $X$ is an endpoint of some $\V{\Omega}$-$X$-path that is a subpath of some path in $\mathcal{P}$ we can extend $\mathcal{L}_2$ to be a planar transaction $\mathcal{L}_3$ of size $h\geq\theta+1$ of $(G,\Omega).$ 
 
We now describe how to use $\mathcal{L}_3$ to obtain the nest $\mathcal{C}'.$ 
Let $I_1$ and $I_2$ be the two segments of $\Omega$ projected from $I_1'$ and $I_2'$ as follows. 
For each vertex of $\Omega'$ there is a unique vertex of $\Omega$ such that these two are linked via a subpath of some path in $\mathcal{P}.$ 
Given some segment $I'$ of $\Omega'$ we say its \emph{projection to $\Omega$} is the smallest segment $I$ of $\Omega$ containing all vertices of $\Omega$ which are linked to $I'$ by subpaths of paths in $\mathcal{P}.$ 
Let us number the paths in $\mathcal{L}_3=\Set{L_1,L_2,\dots,L_h}$ according to the appearance of their starting point on $I_1.$ 
From here on we declare the \emph{inside} of a cycle $C\in\mathcal{C}$ to be everything drawn by $\gamma$ onto the closed disk bounded by the trace of $C$ that contains $c_0,$ but none of the vertices or edges of $C$ itself. 
Now let $i\in[s]$ be the smallest number such that the inside of $C_i$ contains a subpath $P$ of some path of $\mathcal{L}_3$ such that that all of $P$ except its endpoints is drawn on the inside of $C_i,$ both endpoints of $P$ lie on $C_i,$ and $P$ is disjoint from all cycles in $\mathcal{C}\setminus\Set{C_i}.$ 
 
Suppose the number $i$ exists. 
Then the graph $C_i\cup P$ contains a unique cycle that uses edges of $P$ and whose trace bounds a disk that contains $c_0.$ 
Let $C_i'$ be this cycle. 
Note that, since $G$ is a simple graph, there must exist a vertex $v\in\V{C_i}$ which is not contained in $C_i'$ and also not on the inside of $C_i'.$ 
Hence, $\mathcal{C}'=\Set{C_1,\dots,C_{i-1},C_i',C_{i+1},\dots,C_s}<\mathcal{C}$ and we are done. 
 
So we may assume that $i$ does not exist. 
Let $k\in[h]$ be any number with $L_k\in\mathcal{L}_3$ and let $j_k\in[s]$ be the smallest integer such that $L_k$ contains vertices of $C_{j_k}.$ 
By assumption $L_k$ cannot contain any edges or vertices drawn on the inside of $C_{j_k}$ since otherwise the number $i$ would exist. 
 
\medskip 
 
We claim that either the paths in $\CondSet{L_z}{z\in[k-1]}$ or the paths in $\CondSet{L_z}{z\in[k+1,h]}$ are disjoint from $C_{j_k}.$ 
Suppose the claim is false and there exist $a\in[k-1]$ and $b\in[k+1,h]$ such that $L_a$ and $L_b$ both meet $C_{j_k}.$ 
Let $x_1$ be the first vertex of $L_k$ on $C_{j_k}$ and let $x_2$ be the last vertex of $L_k$ on $C_{j_k}.$ 
Then $C_{j_k}$ is divided into two paths, say $Q_1$ and $Q_2,$ both with endpoints $x_1$ and $x_2.$ 
Since $\mathcal{L}_3$ is a planar transaction in a vortex-free cylindrical rendition of $(G-c_0,\Omega)$ exactly one of the paths, say $L_a,$ can intersect $Q_1$ and thus $L_b$ must intersect $Q_2.$ 
Let $Q_1'$ be a maximal subpath of $L_a\cap Q_1$ and let $Q_2'$ be a maximal subpath of $L_b\cap Q_2.$ 
Observe that $L_k$ must avoid both $Q_1'$ and $Q_2'.$ 
Moreover, $L_k$ always stays ``in between'' $L_a$ and $L_b,$ meaning that the graph $C_h\cup L_a\cup L_b$ contains a unique cycle containing both paths $L_a$ and $L_b$ whose trace bounds a disk that fully contains $L_k.$ 
It follows that $L_k$ must contain a non-trivial subpath that lies on the inside of $C_{j_k}$ contradicting our observation above. 
So our claim follows. 
 
For each $j\in[s]$ let $\mathcal{R}_j\subseteq\mathcal{L}_3$ be the collection of all paths $L_k\in\mathcal{L}_3$ such that $j=j_k.$ 
From the previous discussion it follows that $\Abs{\mathcal{R}_j}\leq 2$ for all $j\in[s].$ 
However, since $2s\leq\theta$ and $\Abs{\mathcal{L}_3}\geq\theta+1$ there must exist some $\ell\in[s]$ such that $\Abs{\mathcal{R}_{\ell}}\geq 3.$ 
As this is a contradiction, the case where the number $i$ does not exist cannot occur and our claim follows.
\end{proof}

\begin{proof}[Proof of \autoref{@transformation}] 
Let $G$ be a graph and $W$ be an $R$-wall in $G.$ 
By \autoref{@mouthpiece} we know that either $G$ contains a $K_p$-minor \hyperref[@attachements]{model} grasped by $W,$ or there exists a set $A\subseteq\V{G},$ a surface $\Sigma$ and a $W$-central $\Sigma$-decomposition $ \Delta$ satisfying items \emph{i.}--\emph{vi.} of our theorem. 
We may assume the nests $\mathcal{C}_c$ of all vortex cells $c\in C( \Delta)$ to be chosen such that $\mathcal{C}_c\leq\mathcal{C}$ for all other nests of order $10^{21}p^{100}$ around $c$ that satisfy point vi) of the theorem. 
 
Suppose there exists $c\in C( \Delta)$ such that $\mathcal{C}_c=\Set{C_1,\dots,C_{p^{10^7p^{26}}}}$ is not $p^{10^7p^{26}}$-tight. 
Then \autoref{@metaphysician} yields the existence of a nest $\mathcal{C}<\mathcal{C}_c$ around $c$ which is contained in the disk bounded by the trace of $C_{p^{10^7p^{26}}}$ that contains $c.$ 
Since $\CondSet{\mathcal{C}_x}{x\in C( \Delta)\text{ and }x\text{ is a vortex}}$ satisfies vi) this means that $\CondSet{\mathcal{C}_x}{x\in C( \Delta)\text{, }x\neq c\text{, and }x\text{ is a vortex}}\cup\Set{\mathcal{C}}$ must also satisfy vi). 
Hence, we have reached a contradiction to our previous assumption.
\end{proof}

\subsection{The death of a vortex}\label{@supervision}

To create a minor model of a shallow vortex grid we need to be able to find a large number of crosses on the society of a nest such that these crosses are pairwise disjoint and occur in a sequential fashion on the society.
In this section we discuss how to either separate the non-planar part of a vortex completely from its nest by a bounded size set of vertices, and therefore ``kill'' the vortex, or to find these crosses to construct our minor.

\begin{definition}[Cross Over a Segment]\label{@forgctfiilncss} 
Let $(G,\Omega)$ be a society and $S$ be a segment of $\Omega.$ 
We say that a pair of $V(G)$-paths $(P_1,P_2)$ form a \emph{cross over $S$} if they form a cross over $(G,\Omega)$ and all of their four endpoints lie in $S.$
\end{definition}
\begin{definition}[Consecutive Crosses]\label{@understandable} 
Let $(\rho,G,\Omega)$ be a rendition of a nest $\mathcal{C}$ around a vortex $c$ in a disk $ \Delta$ in $\rho$.
A family $\mathcal{C}=\Set{(L_1,R_1),\dots,(L_h,R_h)}$ of crosses over $(G,\Omega)$ is said to be \emph{consecutive} if there exist segments $I_1,\dots, I_h$ of $\Omega$ such that 
\begin{enumerate}[label=\textit{\roman*})] 
\item for each $i\in[h-1]$ the last vertex of $I_i$ comes before the first vertex of $I_{i+1}$ and $I_1\cap I_h=\emptyset,$ 
\item $\bigcup_{i\in[h]}\Set{L_i,R_i}$ is a family of pairwise disjoint paths, and 
\item for each $i\in[h],$ the pair $(L_i,R_i)$ is a cross over $I_i.$ 
\end{enumerate}
\end{definition}

The main result of this section provides a duality that certifies for every vortex with a large enough nest around it that either this vortex can be completely separated from the properly embedded part of the graph with a small set of vertices or we find the desired minor.

\begin{lemma}\label{@constitution} 
Let $t\leq \theta$ be positive integers. 
There exists a positive universal constant $\mathsf{c}$ such that, if $(\rho=(\gamma,\mathcal{D},c),G,\Omega)$ is a \hyperref[@maidservants]{$\theta$-tight} rendition of a nest $\mathcal{C},$ with $\Abs{\mathcal{C}}\geq 12t^2+\mathsf{c},$ around a vortex $c$ of depth at most $\theta$ in a disk $ \Delta,$ then one of the following holds.
\begin{enumerate}[label=\textit{\roman*})] 
\item There exists a separation $(A,B)$ of order at most $12\theta(t-1)$ with $V(\Omega)\cap B\subseteq A\cap B,$ such that, if $\Omega'$ is the restriction of $\Omega$ to $A\setminus B$ then $(\InducedSubgraph{G}{A\setminus B},\Omega')$ has a vortex-free rendition in the disk, or 
\item $G$ contains the shallow vortex grid of order $t$ as a minor.
\end{enumerate} 
\end{lemma}

\begin{definition}[Patches]\label{@intransigent} 
Let $\theta$ be a positive integer. 
Let $(\rho=(\gamma,\mathcal{D},c),G,\Omega)$ be a \hyperref[@maidservants]{$\theta$-tight} rendition of a nest $\mathcal{C}$ around a vortex $c$ of depth at most $\theta$ in a disk $ \Delta$ in $\rho.$ 
Moreover, assume that there exists a disk $ \Delta'\subseteq \Delta$ with $c\subseteq  \Delta',$ whose boundary intersects $\gamma$ in a set $X$ of nodes only such that 
\begin{itemize} 
\item there exists a family $\mathcal{P}$ of pairwise disjoint $V(\Omega)$-$\widetilde{c}$-paths with $\Abs{\mathcal{P}}=\Abs{\pi(X)}\geq\theta,$ and 
\item the society $(G'\coloneqq  \InducedSubgraph{G}{V(G)\cap V( \sigma( \Delta'))},\Omega')$ has depth at most $3\theta$ where $\Omega'$ is an ordering of $\pi(X)$ induced by $ \Delta'.$ 
\end{itemize}
Finally, let $S$ be a segment of $\Omega'$ and $Z\subseteq V(G').$
Let $Y$ be the collection of all vertices in $V(G')$ which are contained in a connected component of $G'-Z$ with a vertex of $S,$ we call $Y$ the \emph{patch of $(G',\Omega')$ cut at $S$ by $Z$}.
Let $\hat{G}\coloneqq  \InducedSubgraph{G}{V(G)\setminus V(G'-Y)}$ and let $\hat{\Omega}$ be the cyclical ordering of $V(\Omega)\cap V(\hat{G})$ induced by $\Omega.$
We call $(\hat{G},\hat{\Omega})$ the \emph{society from $(G,\Omega)$ cut by $Z$ and patched at $S$} (\emph{with the patch $Y$}).

\begin{figure}[h] 
\begin{center} 
\scalebox{.62}{\includegraphics{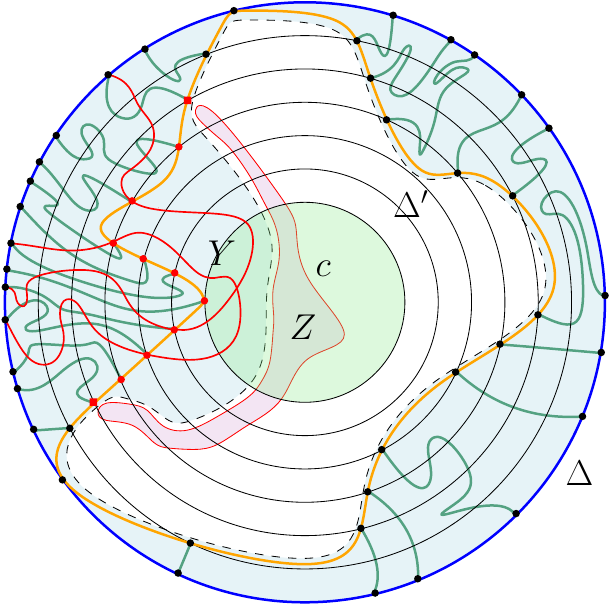}}
\end{center} 
\caption{A visualization of the concepts of \autoref{@intransigent}. The \green{green} paths are the portions of the paths in $\mathcal{P},$ each joining some vertex of $\Omega$ with some vertex of $\Omega'.$ The graph $G'$ is what is cropped by the disk $ \Delta'$ whose boundary is the \darkorange{orange}  cycle. The segment $S$ contains the 10 red vertices. The graph $\hat{G}$ contains everything that is not inside the dashed cycle (that is, the graph inside the \textcolor{celestialblue}{light blue} area). The two \red{red} paths form a cross of the segment $S$ at $Z.$} 
\label{@nationalists} 
\end{figure}

We say that a segment $S$ of $\Omega'$ \emph{has a cross at a set $Z\subseteq V(G')$} if the society from $(G,\Omega)$ cut by $Z$ and \hyperref[def_patch]{patched} at $S$ has a cross. (See \autoref{@nationalists} for a visualization of the above defined concepts.)

Let $\theta$ be a positive integer. 
Let $(\rho=(\gamma,\mathcal{D},c),G,\Omega)$ be a \hyperref[@maidservants]{$\theta$-tight} rendition of a nest $\mathcal{C}$ around a vortex $c$ of depth at most $\theta$ in a disk $ \Delta$ in $\rho.$ 
Moreover, assume that there exists a disk $ \Delta'\subseteq \Delta$ whose boundary intersects $\gamma$ only in vertices with $X=\Boundary{ \Delta'}\cap N(\rho)$ such that
\begin{itemize} 
\item there exists a family $\mathcal{P}$ of pairwise $V(\Omega)$-$\widetilde{c}$-paths with $\Abs{\mathcal{P}}=\Abs{\pi(X)}\geq\theta,$ and 
\item the society $(G'\coloneqq  \InducedSubgraph{G}{V(G)\cap V( \sigma( \Delta'))},\Omega')$ has depth at most $3\theta$ where $\Omega'$ is an ordering of $\pi(X)$ induced by $ \Delta'.$ 
\end{itemize}
We say that the tuple $(\theta,\rho,G,\Omega,\mathcal{C},\Delta,\Delta',\mathcal{P})$ is a \emph{$\theta$-suspension} of $c$ in $(\rho,G,\Omega)$

\end{definition}

\begin{lemma}\label{@swineherds} 
Let $\theta$ be a positive integer. 
Let $(\rho=(\gamma,\mathcal{D},c),G,\Omega)$ be a \hyperref[@maidservants]{$\theta$-tight} rendition of a nest $\mathcal{C}$ around a vortex $c$ of depth at most $\theta$ in a disk $ \Delta$ in $\rho$ and let $(\theta,\rho,G,\Omega,\mathcal{C},\Delta,\Delta',\mathcal{P})$ be a $\theta$-suspension of $c$ in $(\rho,G,\Omega)$.
 
Now let $S_1,\dots,S_{\ell}$ be pairwise disjoint segments of $\Omega'$ such that for each $i\in[\ell]$ there exists a set $Z_i\subseteq V(G')$ separating $S_i$ from $V(\Omega')\setminus S_i.$ 
For each $i\in[\ell]$ let $(G_i',\Omega_i')$ be the society from $(G',\Omega')$ cut by $Z\coloneqq \bigcup_{i\in[\ell]}Z_i$ and \hyperref[def_patch]{patched} at $S_i$ with the patch $Y_i.$
 
If for each $i'\in[\ell]$ the society $(G_{'i'}',\Omega_{'i'}')$ has a cross, then for every $i\in[\ell]$ there exists a segment $I_i$ of $\Omega$ and a pair of $\V{\Omega}$-paths $(L_i,R_i)$ such that 
\begin{enumerate}[label=\textit{\roman*})] 
\item if $i\neq j\in[\ell]$ then $I_i\cap I_j=\emptyset,$ 
\item if $i\neq j\in[\ell]$ then $(V(L_i)\cup V(R_i))\cap (V(L_j)\cup V(R_j))=\emptyset,$ 
\item $(L_i,R_i)$ forms a cross over $I_i,$ and 
\item $L_i$ and $R_i$ intersect every cycle in $\mathcal{C}.$ 
\end{enumerate}
\end{lemma}
See \autoref{@mimeographic} for an illustration of the statement of the above lemma.

\begin{figure}[h] 
\begin{center} 
\scalebox{.62}{\includegraphics{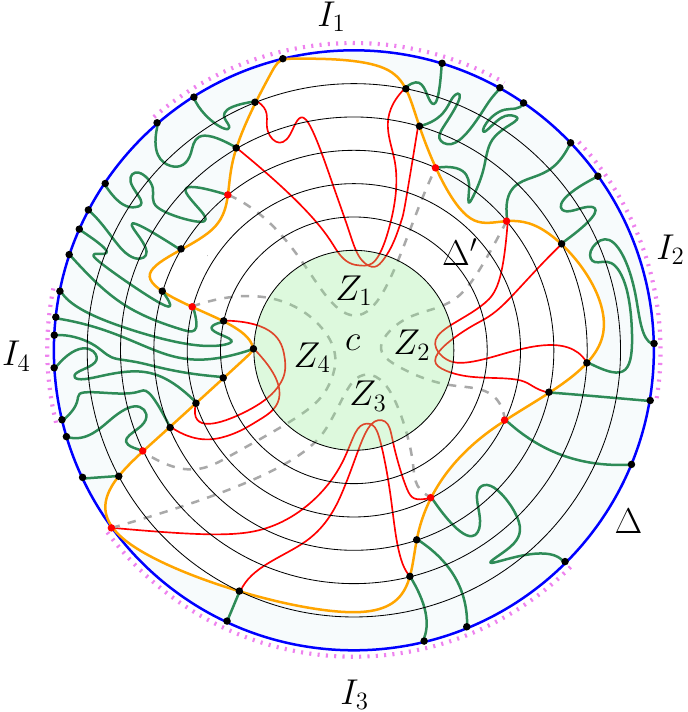}}
\end{center} 
\caption{A visualization of \autoref{@swineherds} for $\ell=4.$ The \green{green}  paths are the portions of the paths in $\mathcal{P},$ each joining some vertex of $\Omega$ with some vertex of $\Omega'.$ 
The segments of $\Omega$ are delimited by the \red{red} vertices, the dashed grey lines represent the separating sets $Z_{1},\ldots,Z_{4},$ the crossing paths in $(G_i',\Omega_i'), i\in[4]$ are red and the \darkmagenta{magenta} dotted lines indicate the segments $I_{1},\ldots,I_{4}.$} 
\label{@mimeographic} 
\end{figure}

\begin{proof} 
Since $(G_i',\Omega_i')$ has a rendition in the disk with a single vortex, for every $i\in[\ell]$ we find paths $Q_1^i$ and $Q_2^i$ for both $i\in[2]$ such that $(Q_1^i,Q_2^i)$ forms a cross over $(G_i',\Omega_i').$ 
Note that this means that $Q_1^i $ and $Q_2^i$ both contain an edge that is drawn in the interior of $c.$  
Let $J_i'$ be the smallest segment of $\Omega'_i$ such that $(Q_1^i,Q_2^i)$ is a cross over $J_i'$ and let $J_i$ be the smallest segment of $\Omega'$ containing $J_i'$ which is disjoint from all $J_j',$ $i\neq j\in[\ell].$ 
Finally, for each $i\in[\ell]$ and each $j\in[2]$ we denote by $x_{i,j}$ and $y_{i,j}$ the two endpoints of $Q_j^i$ such that $x_{i,j}$ comes before $y_{i,j}$ in the order $\Omega'.$ 
Moreover, we may assume $Q_1^i$ and $Q_2^i$ to be numbered such that $x_{i,1}$ comes before $x_{i,2}$ on $\Omega'.$ 
 
Notice that $\Abs{J_i}\geq 4$ for all $i\in[\ell].$ 
Moreover, since $(G,\Omega)$ is {$\theta$-tight}, from every vertex $v\in V(\Omega')=X$ there is a path $P_v\in\mathcal{P}$ joining $v$ to a vertex of $V(\Omega)=V(C_{\Abs{\mathcal{C}}})$ and all of these paths are internally disjoint from $G'.$ 
 
For every $i\in[\ell]$ let us choose the paths $\mathcal{P}_i\coloneqq \Set{P_{x_{i,1}},P_{y_{i,1}},P_{x_{i,2}},P_{y_{i,2}}}\subseteq\mathcal{P}.$ 
Notice that $\bigcup_{i\in[\ell]}\mathcal{P}_i$ is still a family of pairwise disjoint paths. 
Moreover, $Y_i\cap Y_j=\emptyset$ for all $i\neq j\in[\ell]$ by the definition of the $Y_i$ and thus $\bigcup_{i\in[\ell]}\Set{Q_1^i,Q_2^i}$ is also a family of pairwise disjoint paths. 
At last, notice that the endpoints of $P_{x_{i,1}}$ and $P_{y_{i,1}}$ on $V(\Omega)$ define two segments of $\Omega,$ each of which containing an endpoint of one of the two paths $P_{x_{i,2}}$ and $P_{y_{i,2}}.$ 
 
Now let $I_i$ be the segment of $\Omega$ with endpoints $x_{i,1}$ and $y_{i,2}$ that contains $y_{i,1}$ and $x_{i,2}.$ 
Then let $L_i\coloneqq  P_{x_{i,1}}x_{i,1}Q_1^iy_{i,1}P_{y_{i,1}}$ and $R_i\coloneqq  P_{x_{i,2}}x_{i,2}Q_2^iy_{i,2}P_{y_{i,2}}.$ 
From the observations above it is now clear that each $(L_i,R_i)$ forms a cross over $I_i$ and that $\bigcup_{i\in[\ell]}\Set{L_i,R_i}$ is a family of pairwise disjoint paths. 
Moreover, each $L_i$ and each $R_i$ must contain an edge that is drawn in the interior of $c.$ 
Since $\rho$ is a rendition in the disk and both $L_i$ and $R_i$ have their endpoints on $\Omega$ this implies that each such path meets all cycles of $\mathcal{C}.$ 
At last, observe that the segments $J_i$ and $J_j$ on $\Omega'$ are disjoint if $i\neq j.$ 
Suppose there are $i\neq j\in[\ell]$ such that $I_i\cap I_j\neq\emptyset.$ 
Without loss of generality let us assume that $x_{i,1}$ comes before $x_{j,1}$ on $\Omega.$ 
Then we have $y_{i,2}\in I_j$ and $x_{j,1}\in I_i.$ 
Let $F_1$ be the path obtained by starting in $y_{i,2},$ following along $P_{y_{i,2}}$ until we meet $C_1\in\mathcal{C}$ in the vertex $u_1$ for the first time, then following along $C_1$ within $Y_i\cup Z$ until we meet a vertex $v_1$ of $P_{x_{i,1}}$ and from here following along $P_{x_{i,1}}$ until we meet $x_{i,1}.$ 
By the discussion above $F_1$ must be completely disjoint from the paths $L_j$ and $R_j$ since we only used vertices in $Y_i\cup Z$ and from the paths $L_i$ and $L_j.$ 
Let us construct the path $F_2$ in a similar way by starting in $x_{j,1}$ and ending in $y_{j,2}$ and let $u_2$ and $v_2$ be defined analogously to $u_1$ and $v_1.$ 

If follows that any vertex of $V(F_1)\cap V(F_2)$ must belong to $Z\cap V(C_1).$
To see that this is impossible let us consider two subpaths of $C_1.$ 
Let $B_1$ be the $u_1$-$v_1$ subpath of $C_1$ that does not contain a vertex of $\bigcup_{h\in[\ell]\setminus\Set{i}}Y_h.$ 
Similarly let $B_2$ be the $u_2$-$v_2$ subpath of $C_1$ that does not contain a vertex of $\bigcup_{h\in[\ell]\setminus\Set{j}}Y_h.$ 
These paths must exist since there always exists an $u_h$-$v_h$-path on $C_1$ that lies in $Y_h\cup Z$ for both $h\in[2].$ 
The only way where these paths can intersect is, if their endpoints appear on $C_1$ in the order $u_1,u_2,v_1,v_2$ or $u_1,v_2,v_1,u_2.$ 
In both cases, however, it would follow that $Y_i\cap Y_j\neq\emptyset$ which contradicts our assumptions. 
Hence, $F_1$ and $F_2$ are disjoint. 
This implies the existence of a cross over $\Omega$ which does not contain a single edge drawn in the interior of $c,$ contradicting the fact that $\rho$ is a rendition of $(G,\Omega)$ in the disk. 
Hence, $I_i\cap I_j=\emptyset$ follows.
\end{proof}

The next step would be to show that, given some integer $t,$ we can either find $t$ crosses as in the lemma above or completely remove the vortex by deleting a number of vertices bound by a function of $t$ and $\theta.$
That is, we want to show that in the situation where we cannot find $t$ pairwise disjoint patches, each hosting a cross, we find another set of vertices of bounded size whose deletion allows us to ``flatten'' all patches.
If this case does not occur and we find many patches, each hosting a cross, we would like to ``project'' these crosses onto the original society $(G,\Omega)$ while maintaining vertex-disjointness.
The following lemma is the key tool towards the structural results of this paper.

\begin{lemma}\label{@regulating} 
Let $t\leq \theta$ be positive integers. 
Let $(\rho=(\gamma,\mathcal{D},c),G,\Omega)$ be a \hyperref[@maidservants]{$\theta$-tight} rendition of a nest $\mathcal{C}$ around a vortex $c$ of depth at most $\theta$ in a disk $ \Delta$ in $\rho.$ 
Then either 
\begin{enumerate}
\item there exists a separation $(A,B)$ of order at most $6\theta(t-1)$ such that $(\InducedSubgraph{G}{A\setminus B},\Omega^*)$ has a vortex-free rendition in the disk, where $\Omega^*$ is the restriction of $\Omega$ to $A\setminus B$ and $V(\Omega)\cap B\subseteq A\cap B,$ or 
\item there exists a $\theta$-suspension $(\theta,\rho,G,\Omega,\mathcal{C},\Delta,\Delta',\mathcal{P})$ of $c$ in $(\rho,G,\Omega)$ and a consecutive family $\Set{\mathcal{Q}_1,\dots,\mathcal{Q}_2}$ of $t$ crosses over $(G,\Omega)$ together with $t$ pairwise disjoint segments $S_1,\dots,S_t$ of $\Omega'$ and a set $Z'\subseteq V(G')$ such that for each $i\in[t]$ the cross $\mathcal{Q}_i$ intersects $G'$ exactly in a cross of $\Omega'$ at $Z'.$
\end{enumerate}
\end{lemma}

\begin{proof} 
As $(\rho,G,\Omega)$ is \hyperref[@maidservants]{$\theta$-tight}, one of two cases may hold. 
Let us first assume that case \ref{@miraculous} holds. 
Then there exists a set $Z\subseteq V(G)$ with $\Abs{Z}\leq \theta$ such that $Z$ separates $V(\Omega)$ from $\widetilde{c}.$ 
Let $A$ be the collection of all vertex sets of components of $G-Z$ that contain a vertex of $V(\Omega)$ and let $B\coloneqq  V(G)\setminus (A\cup Z).$ 
Moreover, let $\Omega^*$ be the restriction of $\Omega$ to $A.$ 
Then the restriction of $\rho$ to $\InducedSubgraph{G}{A}$ is a vortex-free rendition of $(\InducedSubgraph{G}{A},\Omega^*)$ in a disk and $(A\cup Z, Z\cup B)$ is a separation of order at most $\theta\leq 6\theta(t-1).$ 
Hence, in this case we are done. 
 
So from now on we may assume that case \ref{@idealities} from the definition of \hyperref[def_tightnest]{$\theta$-tightness} holds. 
Hence, we may assume that there exists a disk $ \Delta'\subseteq \Delta$ whose boundary intersects $\gamma$ only in vertices with $X=\Boundary{ \Delta'}\cap N(\rho)$ such that 
\begin{itemize} 
\item there exists a family $\mathcal{P}$ of pairwise $V(\Omega)$-$\widetilde{c}$-paths with $\Abs{\mathcal{P}}=\Abs{\pi(X)}\geq\theta,$ and 
\item the society $(G'\coloneqq  \InducedSubgraph{G}{V(G)\cap V( \sigma( \Delta'))},\Omega')$ has depth at most $3\theta$ where $\Omega'$ is an ordering of $\pi(X)$ induced by $ \Delta'$ 
\end{itemize}
as in the definition of $\theta$-suspensions.
 
In what follows we will iteratively construct segments $J_i$ of $\Omega'$ together with sets $Z_i\subseteq V(G')$ and $S_i\subseteq V(G')$ such that 
\begin{itemize} 
\item for all $j\in[i]$ we have $\Abs{Z_j},\Abs{S_j}\leq 3\theta,$ 
\item for each $j\in[i-1]$ the last vertex of $J_j$ comes before the first vertex of $J_{j+1}$ and $J_1\cap J_i=\emptyset,$ 
\item for all $j\in[i]$ the set $Z^j\coloneqq  \bigcup_{h\in[j]}Z_h$ separates $J_j$ from $V(\Omega')\setminus J_j$ in $G',$ 
\item for all $j\in[i]$ the society from $(G',\Omega')$ cut by $Z^i$ and \hyperref[def_patch]{patched} at $J_j$ has a cross, 
and 
\item if $J^j$ is the segment of $\Omega'$ whose first vertex is the first vertex of $J_1$ and whose last vertex is the last vertex is the last vertex of $J_j,$ while $S^j\coloneqq \bigcup_{h\in[j]}S_h,$ then for all $j\in[i]$ the society from $(G',\Omega')$ cut by $Z^j\cup S^j$ and \hyperref[def_patch]{patched} at $J^j$ has a vortex-free rendition in the disk. 
\end{itemize}
In case this iterative process stops before we complete the step $i=t$ we will find the required separation. 
Otherwise, an application of \autoref{@swineherds} yields the required $t$ consecutive crosses. 
 
Let $λ$ be a linear ordering of $V(\Omega')$ obtained from $\Omega'$ and let us denote by $x_1$ the smallest vertex with respect to $λ.$ 
If $λ'$ is any restriction of $λ$ to some vertex set $U$ we write $V(λ')$ for the set $U.$ 
For any restriction $λ'$ of $λ$ and any property defined for a vertex we say that a vertex $u$ is the \emph{smallest} with this property in $λ'$ if no vertex $v\in V(λ ')$ with $λ'(v)<λ'(u)$ has the property, but $u$ does. 
For any two vertices $u,v\in V(λ)$ we define $J_{u,v}\coloneqq  \CondSet{w\in V(λ)}{λ(u)\leqλ(w)\leqλ(v)}.$ 
Note that $J_{u,v}$ is empty if $λ(v)<λ(u).$ 
Moreover, $J_{u,v}$ always defines a segment of $\Omega'.$ 
 
Since $(G',\Omega')$ has depth at most $3\theta,$ for any segment $I$ of $\Omega'$ there exists a set $Y$ of size at most $3\theta$ separating $I$ from $V(\Omega')\setminus I$ in $G'.$ 
For any pair $u,v\in V(\Omega')$ and an already given set $Z^h,$ where $h\in\mathbb{Z},$ we denote by $Y^h_{u,v}$ a set of order at most $3\theta$ such that $Z^h\cup Y_{u,v}$ separates $J_{u,v}$ from $V(\Omega')\setminus J_{u,v}$ and $Y^h_{u,v}$ only contains vertices of $G'-Z^h$ that belong to a component of $G'-Z^h$ which contains a vertex of $J_{u,v}.$ 
 
Let us fix $Z^0\coloneqq \emptyset.$ 
Now let $z_1$ be the smallest vertex of $V(λ)$ such that the society from $(G',\Omega')$ cut by $Y^0_{x_1,z_1}$ and \hyperref[def_patch]{patched} at $J_{x_1,z_1}$ has a cross and let $y_1$ be the immediate predecessor of $z_1$ with respect to $λ.$ 
Note that in case $z_1$ does not exist $(G',\Omega')$ has a vortex-free rendition in the disk and thus so does $(G,\Omega)$ which would complete our proof with the separation $(V(G),\emptyset).$ 
Let us now set $J_1\coloneqq  J_{x_1,z_1},$ $Z_1\coloneqq  Y^0_{x_1,z_1},$ and $S_1\coloneqq  Y^0_{x_1,y_1}.$ 
It follows from the minimality of $z_1$ that the society from $(G',\Omega')$ cut by $Z_1\cup S_1$ and \hyperref[def_patch]{patched} at $J_1$ has a vortex-free rendition in the disk. 
Hence, the first iteration is complete. 
 
Now assume for some $\ell\leq t-1$ the vertices $x_i,y_i,z_i$ together with the segments $J_{x_i,z_i}$ and the sets $Z_i=Y^{i-1}_{x_i,z_i}$ and $S_i=Y^{i-1}_{x_i,y_i}$ have already been constructed for all $i\in[\ell]$ meeting the requirements from above. 
Let $\Complement{J}$ be the segment of $\Omega'$ obtained by deleting the vertices of the segment $J^{\ell}.$ 
 
Suppose the society from $(G',\Omega')$ cut by $Z^{\ell}\cup S^{\ell}$ and \hyperref[def_patch]{patched} at $\Complement{J}$ has no cross. 
Let $A$ be the collection of all vertices of $G$ that are contained in a component of $G-(Z^{\ell}\cup S^{\ell})$ that contains a vertex of $V(\Omega),$ let $\Omega^*$ be the restriction of $\Omega$ to $A,$ and let $B\coloneqq  V(G)\setminus(A\cup Z^{\ell}\cup S^{\ell}).$ 
As $\ell\leq t-1$ and $\Abs{Z_i},\Abs{S_i}\leq 3\theta$ for all $i\in[\ell]$ we have $\Abs{Z^{\ell}\cup S^{\ell}}\leq 6\theta(t-1).$ 
Hence, $(A\cup Z^{\ell}\cup S^{\ell},Z^{\ell}\cup S^{\ell}\cup B)$ is a separation of order at most $6\theta(t-1)$ and $(\InducedSubgraph{G}{A},\Omega^*)$ has a vortex-free rendition in the disk.
Thus, we are done with this case. 
 
\begin{figure}[h] 
\begin{center} 
~~~~~~~~~~~~~\scalebox{1}{\includegraphics{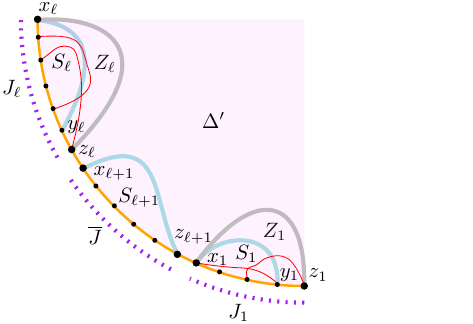}}
\end{center} 
\caption{A visualization of \textbf{Rule 2} in the proof of \autoref{@regulating}. The fat grey and \textcolor{celestialblue}{light blue} lines represent the separators $Z_{i}$ and $S_{i},$ and the \darkmagenta{magenta} dotted lines are the corresponding segments $J_{i}.$} 
\label{@neoliberal} 
\end{figure} 
 
From now on we may assume that the society from $(G',\Omega')$ cut by $Z^{\ell}\cup S^{\ell}$ and \hyperref[def_patch]{patched} at $\Complement{J}$ has a cross. 
 
Let $x_{\ell+1}$ be the immediate successor of $z_{\ell}$ with respect to $λ$ and let $z_{\ell+1}$ be chosen according to one of the following rules: 
\begin{description} 
\item[Rule 1] Let $z_{\ell+1}$ be the smallest vertex of $V(λ)$ with $λ(x_{\ell+1})\leqλ(z_{\ell+1})$ such that the society from $(G',\Omega')$ cut by $Y^{\ell}_{x_{\ell+1},z_{\ell+1}}\cup Z^{\ell}$ and \hyperref[def_patch]{patched} at $J_{x_{\ell+1},z_{\ell+1}}$ has a cross. 
 
\item[Rule 2] If a choice according to \textbf{Rule 1} is not possible select $z_{\ell+1}$ such that $J_{x_{\ell+1},z_{\ell+1}}=\Complement{J}$ and the society from $(G',\Omega')$ cut by $Z^{\ell}\cup S^{\ell}$ and \hyperref[def_patch]{patched} at $\Complement{J}$ has a cross.  
\end{description} 
By our assumption, we may consecutively apply \textbf{Rule 1} and, when this is not any more possible, either the procedure stops, which means that the society from $(G',\Omega')$ cut by $Z^{\ell}\cup S^{\ell}$ and \hyperref[def_patch]{patched} at $\Complement{J}$ has a vortex free rendition in the disk, or it stops after applying \textbf{Rule 2} once. In the later case, 
the society from $(G',\Omega')$ cut by $(Z^{\ell}\cup S^{\ell})\cup Y^{\ell}_{x_{\ell+1},z_{\ell+1}}$ and \hyperref[def_patch]{patched} at $V(\Omega')$ has a vortex free rendition in the disk (see \autoref{@neoliberal}).  
 
Either way let $y_{\ell+1}$ be the immediate predecessor of $z_{\ell+1}$ under $λ.$ 
 
In case $z_{\ell}$ was chosen by \textbf{Rule 2} we set $Z_{\ell+1}\coloneqq  \emptyset$ and $S_{\ell+1}\coloneqq  Y^{\ell}_{x_{\ell+1},z_{\ell+1}}.$ 
Note that in this case $J_{\ell+1}=V(λ).$ 
Hence, we satisfy all five rules of our iterative process simply by assumption and may continue with step $\ell+2$ or we are in the case $\ell+1=t$ which will be treated later. 
 
So we may further assume that $z_{\ell+1}$ was chosen according to \textbf{Rule 1}. 
Now we set $J_{\ell+1}\coloneqq  J_{x_{\ell+1},z_{\ell+1}},$ $Z_{\ell+1}\coloneqq  Y^{\ell}_{x_{\ell+1},z_{\ell+1}},$ and $S_{\ell+1}\coloneqq  Y^{\ell}_{x_{\ell+1},y_{\ell+1}}.$ 
We then have, by the discussion above, $\Abs{Z_{\ell+1}},\Abs{S_{\ell+1}}\leq 3\theta$ and $J_{\ell+1}$ is disjoint from all other segments $J_i,$ $i\in[\ell].$ 
Moreover, $Z^{\ell+1}$ separates $J_{\ell+1}$ from $V(\Omega')\setminus J_{\ell+1}$ within $G'$ by choice of $Z_{\ell+1}.$ 
By \textbf{Rule 1} the society from $(G',\Omega')$ cut by $Z^{\ell+1}$ and \hyperref[def_patch]{patched} at $J_{x_{\ell+1},z_{\ell+1}}$ has a cross and finally, by the minimality of $z_{\ell+1}$ we have that the society from $(G',\Omega')$ cut by $Z^{\ell+1}\cup S^{\ell+1}$ and \hyperref[def_patch]{patched} at $J^{\ell+1}$ has a vortex-free rendition in the disk. 
Thus, all five requirements for our iteration are met and we may continue or have entered the case where $\ell+1=t.$ 
 
To complete the proof let us discuss what happens in the case where we have successfully completed iteration round $t$ without finding the separation. 
We need to show that this implies the existence of $t$ consecutive crosses over $(G,\Omega).$ 
Since we have successfully completed round $t$ we have found segments $J_1,\dots,J_t$ such that the first vertex of $J_{i+1}$ comes before the last vertex of $J_i$ for all $i\in[t-1]$ and $J_1\cap J_t=\emptyset.$ 
For each $i\in[t]$ let $Y_i$ be the \hyperref[def_patch]{patch} of $(G',\Omega')$ cut at $J_i$ by $Z^t$ and let $(G_i,\Omega_i)$ be the restriction of $G$ and $\Omega$ to the graph $G-(V(G')\setminus Y_i).$ 
Since the society from $(G',\Omega')$ cut by $Z^t$ at $J_i$ has a cross, the society $(G_i,\Omega_i)$ cannot have a vortex-free rendition in the disk and thus, by \autoref{@postscripts}, it must have a cross. 
Therefore, we may apply \autoref{@swineherds} to obtain our $t$ consecutive crosses.
\end{proof}

So far we have seen that, in the second case of \hyperref[@maidservants]{$\theta$-tightness}, we can find disjoint crosses over our society that meet all cycles in the nest or we can completely eliminate the vortex.
To construct a shallow vortex minor however, we need the paths of these crosses to be particularly well behaved with a large portion of the nest.

Our goal is to obtain a still large number of concentric cycles which behaves in an ``orthogonal'' fashion with respect to the $t$-consecutive crosses \autoref{@regulating} provides us with in case we cannot remove a vortex with a small set of vertices.
To realise this plan we will take several steps.
Let $(\rho,G,\Omega)$ be a rendition of a nest $\mathcal{C}$ around a vortex $c$ and let $(P_1,P_2)$ be a cross over $(G,\Omega).$
Clearly each of the two paths $P_i$ needs to contain an edge that is drawn in the interior of $c.$
However, we can not necessarily guarantee a bound on the number of components in $(P_1\cup P_2)-\widetilde{c}.$
That is, the paths $P_i$ may leave and re-enter the vortex $c$ many times before eventually making progress towards $V(\Omega).$
This poses an issue as any $\widetilde{c}$ subpath of one of the $P_i$ whose interior is disjoint from the interior of $c$ cannot necessarily be used for a re-routing of the cycles in $\mathcal{C}$ that eventually achieves ``orthogonality''.
To cope with this issue we proceed to prove that there exists some constant $\mathsf{c}$ such that any cross $(P_1,P_2)$ can be rerouted to a cross $(P_1',P_2')$ with the same endpoints as $(P_1,P_2)$ but every $\widetilde{c}$-subpath of a path $P_i'$ is disjoint from all cycles $C_j\in\mathcal{C}$ with $j>\mathsf{c}.$ For instance this property does not hold for the crossing paths in the left-side of \autoref{@recollection} while it holds for the right-side one.
To achieve this we need some further definitions.
 
\begin{figure}[h] 
\hspace{-2.6cm}\scalebox{.89}{\input{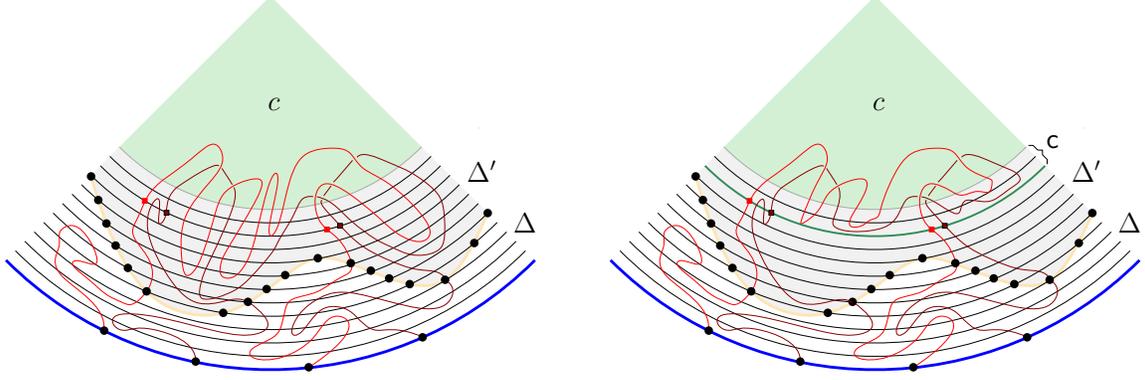}}
\caption{Two ways crossing paths $P_{1},P_{2}$ (in red and dark red) may be rooted inside $ \Delta'.$ Notice that in the picture on the right every $\widetilde{c}$-subpath of a path $P_i$ is disjoint from all cycles $C_j\in\mathcal{C}$ with $j>\mathsf{c}$ while this is not the case in the picture on the left.} 
\label{@recollection} 
\end{figure}
 
In the following we will identify a linkage $\mathcal{P}$ with the graph $\bigcup_{P\in\mathcal{P}}P.$
Let $G$ be a graph and let $\mathcal{P}$ be a linkage in $G.$
The set $\CondSet{\Set{s,t}}{\text{some path in }\mathcal{P}\text{ has endpoints }s\text{ and }t}$ is called the \emph{pattern} of $\mathcal{P}.$
Two linkages $\mathcal{Q}_1,$ $\mathcal{Q}_2$ are said to be \emph{equivalent} if they have the same pattern.
A linkage $\mathcal{Q}$ is said to be \emph{vital} if $V(\mathcal{Q})=V(G)$ and there exists no other linkage in $G$ equivalent to $\mathcal{Q}.$
The famous \emph{unique linkage theorem} of Robertson and Seymour \cite{robertson2009graph,kawarabayashi2010shorter} states that, if $\mathcal{P}$ is vital in a graph $G,$ then the treewidth of $G$ is bounded in some function of $\Abs{\mathcal{P}}.$
Instead of the unique linkage theorem however, we make use of a slightly more convenient version from \cite{golovach2020hitting}.

\begin{definition}[LB-Pair]\label{@impression} 
Given a graph $G,$ a LB-pair of $G$ is a pair $(\mathcal{L},B)$ where $B$ is a subgraph of $G$ with maximum degree two and $\mathcal{L}$ is a linkage in $G.$ 
We define $\Disagreement{\mathcal{L},B}\coloneqq  \Abs{E(\mathcal{L})\setminus E(B)}$ to be the number of edges from $\mathcal{L}$ on which $\mathcal{L}$ and $B$ disagree.
\end{definition}

\begin{proposition}[\!\!\cite{golovach2020hitting}]\label{@liquidating} 
There exists a function $\UniqueLinkage\colon\Bbb{N}\rightarrow\Bbb{N}$ such that for all graphs $G$ and all LB-pairs $(\mathcal{L},B)$ of $G,$ if $\tw{\mathcal{L}\cup B}>\UniqueLinkage(\Abs{\mathcal{L}})$ then $G$ contains a linkage $\mathcal{R}$ such that 
\begin{enumerate}[label=\textit{\roman*})] 
\item $\Disagreement{\mathcal{R},B}<\Disagreement{\mathcal{L},B},$ 
 
\item $\mathcal{L}$ and $\mathcal{R}$ are equivalent, and 
 
\item $\mathcal{R}\subseteq \mathcal{L}\cup B.$ 
\end{enumerate}
\end{proposition}

Given an LB-pair $(\mathcal{L},B)$ of a graph $G$ we say that $(\mathcal{L},B)$ is of \emph{maximum consensus} if for all families of pairwise disjoint paths $\mathcal{R}$ which are equivalent to $\mathcal{L}$ we have $\Disagreement{\mathcal{L},B}\leq \Disagreement{\mathcal{R},B}.$ 
It follows from \autoref{@liquidating} that $\tw{\mathcal{L}\cup B}\leq \UniqueLinkage(\Abs{\mathcal{L}})$ for all LB-pairs $(\mathcal{L},B)$ of maximum consensus.

\begin{lemma}\label{@bankruptcy}
Let $\theta$ be a positive integer.
Let $(\rho=(\gamma,\mathcal{D},c),G,\Omega)$ be a \hyperref[@maidservants]{$\theta$-tight} rendition of a nest $\mathcal{C},$ with $\Abs{\mathcal{C}}\geq 2\cdot \UniqueLinkage(2)+4,$ around a vortex $c$ of depth at most $\theta$ in a disk $ \Delta$ in $\rho.$
Moreover, let $(\theta,\rho,G,\Omega,\mathcal{C},\Delta,\Delta',\mathcal{P})$ be a $\theta$-suspension of $c$ in $(\rho,G,\Omega)$
Let $Z\subseteq V(G')$ be a set of vertices and let $S\subseteq V(\Omega')$ be a segment of $\Omega'$ such that $S$ has a cross $(L,R)$ at $Z.$
Then there exists a pair of paths $(L',R')$ such that
\begin{enumerate}[label=\textit{\roman*})] 
\item $(L',R')$ is a cross of $S$ at the set $Z,$ 
\item $L'$ and $L$ have the same endpoints, 
\item $R'$ and $R$ have the same endpoints, and 
\item any subpath of $L'$ or $R'$ with both endpoints in $\widetilde{c}$ intersects at most $\mathsf{c}\coloneqq  2\UniqueLinkage(2)+2$ many cycles of $\mathcal{C}.$
\end{enumerate}
\end{lemma}

Towards a proof for \autoref{@bankruptcy} we need a way to certify large treewidth.
To do this we make use of the well known concept of brambles.

Let $G$ be a graph and $H_1,$ $H_2$ be two connected subgraphs of $G.$
We say that $H_1$ and $H_2$ \emph{touch} if $\V{H_1}\cap\V{H_2}\neq\emptyset,$ or there is an edge $uv$ with $u\in\V{H_1}$ and $v\in\V{H_2}.$
A set $S\subseteq\V{G}$ is a \emph{hitting set} or \emph{cover} for a family $\mathcal{S}$ of subgraphs of $G,$ if $\V{H}\cap S\neq\emptyset$ for all $H\in\mathcal{S}.$

\begin{definition}[Bramble] 
Let $G$ be a graph. 
A \emph{bramble} $\mathcal{B}=\Set{B_1,B_2,\dots,B_{\ell}}$ of $G$ is a family of connected and pairwise touching subgraphs $B_i$ of $G.$ 
The \emph{order} of $\mathcal{B}$ is the size of a minimum hitting set for $B.$
\end{definition}

\begin{proposition}[\!\!\cite{seymour1993graph}]\label{@nondeducible} 
Let $G$ be a graph, and $k\in\Bbb{N}$ a positive integer. 
Then $G$ contains a bramble of order $k$ if and only if $\tw{G}\geq k-1.$
\end{proposition}

\begin{proof}[Proof of \autoref{@bankruptcy}]
Let $(\hat{G},\hat{\Omega})$ be the society from $(G',\Omega')$ cut by $Z$ and \hyperref[def_patch]{patched} at $S.$
Let $U$ be some subpath of $L\cup R$ with both endpoints in $\widetilde{c}$ that is internally disjoint from $\widetilde{c}$ and let $U'$ be its trace.
We call such a subpath of $L\cup R$ an \emph{arc} of $(L,R).$
Note that $U'$ together with the boundary of $c$ divides $ \Delta'$ into three different areas, the disk bounded by $c$ and two disks which are subsets of $ \Delta'-(c-\Boundary{c}),$ exactly one of these disks contains all vertices of $L$ and $R$ that lie on the boundary of $ \Delta'.$
Let $ \Delta_U$ be the remaining disk.
Observe that every maximal subpath of $L\cup R$ which is drawn in $ \Delta_U$ must have both of its endpoints in $\widetilde{c}.$
We call $ \Delta_U$ a \emph{mountain} if there does not exist another arc $W$ of $(L,R)$ such that $ \Delta_U\subset  \Delta_W,$ moreover, if $ \Delta_U$ is a mountain, we call $U$ its \emph{outline}.

Now every subpath of $L\cup R$ that intersects some cycle from $\mathcal{C}$ and has both endpoints in $\widetilde{c}$ must be completely contained in $\widetilde{c}$ together with the union of all arcs of $(L,R).$
Hence, every arc of $(L,R)$ must either be an outline of some mountain or drawn in the interior of some mountain.
Moreover, if $U$ and $W$ are distinct outlines, then their corresponding mountains are disjoint with the possible exception for the endpoints of $U$ and $W.$
For every arc $W$ of $(L,R)$ let $z_W$ be the maximum number of cycles from $\mathcal{C}$ met by $W.$
Let $n=\Abs{\mathcal{C}}.$
Given a mountain $ \Delta_U,$ we may associate a vector $\mathsf{v}(U)\in\mathbb{N}^n$ such that for all $i\in[n],$ $$ \mathsf{v}(U)_{n-i+1}=\Abs{\CondSet{W}{W\text{ is an arc drawn in } \Delta_U\text{ with }z_W=i}}.$$
We call $\mathsf{v}(U)$ the  \emph{characteristic vector} of the mountain $ \Delta_U.$
Note that $\mathsf{v}(U)_i$ is the number of arcs drawn in $ \Delta_U$ that meet the cycle $C_{n-i+1}.$
We now proceed to show that we can find a cross $(L',R')$ with properties \textit{i., ii.,} and \textit{iii.}, such that no mountain of $(L',R')$ contains a vertex of $\mathsf{c}+1$ cycles.

Let $(L',R')$ be chosen to first minimize the number of mountains, second minimize the number of edges on which the arcs within a mountain disagree with the cycles from $\mathcal{C},$ and thirdly to lexicographically minimize the characteristic vectors of all mountains.
Suppose there exists an outline $U$ such that the corresponding mountain $ \Delta_U$ meets at least $h\geq \mathsf{c}+1$ cycles.
For each $i\in[h]$ let $B_i$ be the subpath of $C_i$ which is completely drawn into the disk $ \Delta_U,$ and let $B$ be the union of all of these subpaths.
Let $\mathcal{U}$ be the union of all arcs of $(L,R)$ which are drawn in $ \Delta_U.$
Note that the trace of any arc $W\in\mathcal{U},$ distinct from $U,$ separates $ \Delta_U$ into two disks, one of them containing all vertices of $U,$ let us call this disk the \emph{upper part of $W$}, the other disk is called the \emph{lower part of $W$}.
Moreover, every other arc is completely contained in one of these two disks.
Let us assume $\mathcal{U}=\Set{U_1,\dots,U_{\ell}}$ is numbered such that $U_1=U$ and for all $i\in[2,\ell]$ the upper part of $U_i$ contains the arc $U_{i-1}.$
Please note that this ordering is not uniquely determined.

In the following we will define a bramble as follows.
Let $K_1\coloneqq  B_h\cup U_1$ and let $x_1, y_1$ be the two endpoints of $U_1.$
Now suppose $U_2$ does not meet $B_{h-1}.$
In this case let $P_1$ be the shortest $x_1$-$B_{h-1}$-subpath of $U_1$ and let $P_2$ be the shortest $B_{h-1}$-$y_1$-subpath of $U_1.$
Finally, let $P_3$ be the shortest subpath of $B_{h-1}$ that joins the two endpoints of $P_1$ and $P_2.$
We may now exchange the path $U_1$ in $L'\cup R'$ with the path $P_1\cup P_3\cup P_2.$
This yields an immediate contradiction to the choice of $(L',R')$ as this exchange yields a new pair of paths whose mountains are the same as the mountains of $(L',R')$ with the sole exception of $U,$ here the new pair has a mountain whose characteristic vector is lexicographically smaller than the one of $U$.
Moreover, we claim that the numbering of the $U_i$ can be chosen such that for any $i\in[2,h]$ the path $U_i$ meets $B_{h-i+1}.$
To see this, suppose there is some $i$ such that for all $j<i$ the choice was possible, but for $i$ itself it is not.
This means that for all $j\in[2,i-1]$ the arc $U_j$ meets $B_{h-i+2}$ and every arc in the lower part of $U_j$ avoids $B_{h-i+1}.$ 
Hence, we may use $B_{h-i+1}$ to obtain a cross with the same number of mountains as $(L',R'),$ but which contradicts the lexicographic minimality of the characteristic vectors as before.
Hence, we may define $K_i\coloneqq  B_{h-i+1}\cup U_i$ for all $i\in h.$

Now observe that for any $j\in[\ell]$ and $i\in[h],$ if $U_j$ meets $B_i,$ then $U_j$ intersects all $B_{i'}$ with $i'\leq i.$
Hence, $K_i$ and $K_j$ intersect for all choices of $i,j\in[h].$
However, since the $B_i$ are pairwise disjoint and the $U_j$ are pairwise disjoint, no vertex can be contained in 
more than two of the $K_i$ at once.
Hence, any cover of $\mathcal{K}\coloneqq \CondSet{K_i}{i\in[h]}$ must have size at least $\Ceil{\frac{h}{2}}\geq \UniqueLinkage(2)+2,$ making it a bramble of order at least $\UniqueLinkage(2)+1.$
Hence, by \autoref{@nondeducible} the treewidth of $\CondSet{U_i}{i\in[h]}\cup B$ is at least $\UniqueLinkage(2)+1.$
By \autoref{@liquidating} this is a contradiction to the second minimization parameter we used for the choice of $(L',R')$ and thus our proof is complete.
\end{proof}

Towards extracting the structure of a shallow vortex grid, we need that all crossing paths implied by \autoref{@regulating} traverse  the cycles of the nest ``orthogonally''.
This was formalized in \cite{kawarabayashiTW21quickly} with the following definition.

\begin{definition}[Orthogonal Linkage and Crosses]\label{@nihilistic} 
Let $(\rho,G,\Omega)$ be a rendition of a nest $\mathcal{C}$ around a vortex $c$ in a disk $ \Delta$ in $\rho$ and let $\mathcal{P}$ be a $\V{\Omega}$-$\widetilde{c}$ linkage. 
We say that $\mathcal{P}$ is \emph{orthogonal} to $\mathcal{C}$ if for every $C\in\mathcal{C}$ and every $P\in\mathcal{P}$ the graph $P\cap C$ is a, possibly trivial, path.
 
Let $\mathcal{U}$ be a collection of crosses over $(G,\Omega)$ and let $\mathcal{Q}$ be the collection of all $V(\Omega)$-$\widetilde{c}$-subpaths of the paths of the crosses in $\mathcal{C}.$ 
We say that $\mathcal{U}$ is \emph{orthogonal} to $\mathcal{C}$ if $\mathcal{Q}$ is orthogonal to $\mathcal{C}.$
\end{definition}

Please notice that the demand for the intersection of some path $P$ from $\mathcal{P}$ and some cycle $C$ from $\mathcal{C}$ cannot be further restricted to be a single vertex since this definition needs to apply, in particular, to graphs with maximum degree three.

\begin{lemma}\label{@suppressed}
Let $t\leq \theta$ be positive integers.
Let $(\rho=(\gamma,\mathcal{D},c),G,\Omega)$ be a \hyperref[@maidservants]{$\theta$-tight} rendition of a nest $\mathcal{C},$ with $\Abs{\mathcal{C}}\geq 8t^2+2\UniqueLinkage(2)+4,$ around a vortex $c$ of depth at most $\theta$ in a disk $ \Delta$ in $\rho.$
Moreover, assume that $(\theta,\rho,G,\Omega,\mathcal{C},\Delta,\Delta',\mathcal{P})$ is $\theta$-suspension of $c$ in $(\rho,G,\Omega)$ and that there is a consecutive family $\mathcal{Q}\coloneqq \Set{\mathcal{Q}_1,\dots,\mathcal{Q}_t}$ of $t$ crosses over $(G,\Omega)$ together with $t$ pairwise disjoint segments $S_1,\dots,S_t$ of $\Omega''$ and a set $Z''\subseteq V(G'')$ such that for each $i\in[t]$ the cross $\mathcal{Q}_i$ intersects $G''$ exactly in a cross of $\Omega''$ at $Z''.$

Then there exists a family $\mathcal{C}'$ of $t$ pairwise vertex disjoint cycles whose traces all separate $c$ from $V(\Omega)$ and such that $\mathcal{Q}$ is \hyperref[def_orthogonal]{orthogonal} to $\mathcal{C}'.$
\end{lemma}

\begin{proof}
Let $\mathcal{C}=\Set{C_1,\dots,C_{\ell}}.$
By \autoref{@bankruptcy} we may assume that no arc of $\mathcal{Q}_i$ meets the cycle $C_{2\UniqueLinkage(2)+3}$ for any $i\in[t].$
For every $i\in[t]$ let $\mathcal{Q}_i=(L_i,R_i),$ and let $s_i^L,s_i^R,t_i^L,t_i^R$ be the four endpoints of the paths $L_i$ and $R_i$ respectively in the order they appear on $\Omega.$
Moreover, for every $i\in[t]$ and every $W\in\Set{L,R}$ let $x_i^W$ be the first vertex of $\widetilde{c}$ encountered when traversing $W_i$ starting in $s_i^W,$ and let $y_i^W$ be the first vertex of $\widetilde{c}$ encountered when traversing $W_i$ starting in $t_i^W.$
Finally, let $\mathcal{P}_i\coloneqq  \Set{s_i^LL_ix_i^L,y_i^LL_it_i^L,s_i^RR_ix_i^R,y_i^RR_it_i^R},$ $\mathcal{P}\coloneqq  \bigcup_{i=1}^t\mathcal{P}_i,$ $T=\bigcup_{i\in[t]}\Set{x_i^L,y_i^L,x_i^R,y_i^R},$ $\widetilde{\Omega}$ be the cyclic ordering of the vertices in $T$ obtained by traversing the boundary of $c$ in clockwise direction, let $Q\coloneqq  \Set{C_{2\UniqueLinkage(2)+5},\dots,C_{2\UniqueLinkage(2)+4+8t^2}}\cup\mathcal{P},$ and $(Q,\widetilde{\Omega})$ be the society defined by $Q$ and $\widetilde{\Omega}.$
Note that it follows from our assumptions that $(Q,\widetilde{\Omega})$ has a vortex-free rendition in the disk.

For better readability let us rename $O_i\coloneqq  C_{2\UniqueLinkage(2)+4+i}$ for every $i\in[4t^2].$
Note that for each $i\in[t]$ the vertices $x_i^L,x_i^R,y_i^L,y_i^R$ appear on $\widetilde{\Omega}$ in the order listed.
Moreover, the vertices of $V(\Omega)$ which are present in $Q$ appear in the rendition of $(Q,\widetilde{\Omega})$ in the same order as they appear on $\Omega.$
Now, for every $i\in[t-1]$ and $j\in[t]$
\begin{itemize} 
\item let $B_{i,j,L,1}$ be a shortest $s_i^LL_ix_i^L$-$s_i^RR_ix_i^R$ subpath of the cycle $O_{t(j-1)+4(i-1)+1},$ 
\item let $B_{i,j,R,1}$ be a shortest $s_i^RR_ix_i^R$-$t_i^LL_iy_i^L$ subpath of the cycle $O_{t(j-1)+4(i-1)+2},$ 
\item let $B_{i,j,L,2}$ be a shortest $t_i^LL_iy_i^L$-$t_i^RR_iy_i^R$ subpath of the cycle $O_{t(j-1)+4(i-1)+3},$ and 
\item let $B_{i,j,R,2}$ be a shortest $t_i^RR_iy_i^R$-$s_{i+1}^LL_{i+1}y_{i+1}^L$ subpath of the cycle $O_{t(j-1)+4(i-1)+4}.$
\end{itemize}
Moreover,
\begin{itemize} 
\item let $B_{t,j,L,1}$ be a shortest $s_t^LL_tx_t^L$-$s_t^RR_tx_t^R$ subpath of the cycle $O_{t(j-1)+4(t-1)+1},$ 
\item let $B_{t,j,R,1}$ be a shortest $s_t^RR_tx_t^R$-$t_t^LL_ty_t^L$ subpath of the cycle $O_{t(j-1)+4(t-1)+2},$ 
\item let $B_{t,j,L,2}$ be a shortest $t_t^LL_ty_t^L$-$t_t^RR_ty_t^R$ subpath of the cycle $O_{t(j-1)+4(t-1)+3},$ and 
\item let $B_{t,j,R,2}$ be a shortest $t_t^RR_ty_t^R$-$s_1^LL_1y_1^L$ subpath of the cycle $O_{t(j-1)+4t}.$
\end{itemize}
Note that the paths of the form $B_{i,j,W,h}$ are pairwise internally disjoint and for each $j\in[t],$ every path of $P\in \mathcal{P}$ meets exactly two paths of the form $B_{i,j,W,h}.$
Indeed, both of these paths are met exactly in their endpoints.
These endpoints can be joined by a unique subpath of $P$ each, resulting in total in $2t$ pairwise disjoint cycles $D_1,\dots,D_{t}.$
Note that it immediately follows that the trace of each $D_i$ separates $c$ from $V(\Omega)$ in $\rho.$

By the choice of the $O_i$ and our assumption from before it follows that each $B_{i,j,W,h}$ is internally disjoint from all paths from the crosses of $\mathcal{Q}$ and thus $\mathcal{Q}$ is \hyperref[def_orthogonal]{orthogonal} to $\Set{D_1,\dots,D_{2t}}.$
\end{proof}

We are finally ready to prove the main result of this section.

\begin{proof}[Proof of \autoref{@constitution}] 
Let $\mathsf{c}\coloneqq  2\UniqueLinkage(2)+4.$ 
We start by applying \autoref{@regulating}. 
This either yields the desired separation or we find a $\theta$-suspension $(\theta,\rho,G,\Omega,\mathcal{C},\Delta,\Delta',\mathcal{P})$ of $c$ in $(\rho,G,\Omega)$.
Moreover, if $(G''\coloneqq  \InducedSubgraph{G}{V(G)\cap  \Delta'},\Omega'')$ is the society defined by $\Delta'$ in the definition of $\theta$-suspensions, then there exists a consecutive family $\mathcal{Q}=\Set{\mathcal{Q}_1,\dots,\mathcal{Q}_{2t}}$ of $2t$ crosses over $(G,\Omega)$ together with $2t$ pairwise disjoint segments $S_1,\dots,S_{2t}$ of $\Omega''$ and a set $Z''\subseteq V(G'')$ such that for each $i\in[2t]$ the cross $\mathcal{Q}_i$ intersects $G''$ exactly in a cross of $\Omega''$ at $Z''.$ 
 
Now an application of \autoref{@suppressed} yields the existence of a family $\mathcal{D}$ of $\frac{1}{8}\cdot8\cdot\sqrt{4t^2}=2t$ pairwise disjoint cycles, all of which have traces that separate $V(\Omega)$ from $c$ such that $\mathcal{D}$ is \hyperref[def_orthogonal]{orthogonal} to $\mathcal{Q}.$ 
Hence, the graph consisting of the cycles of $\mathcal{D}$ together with the paths of the crosses from $\mathcal{Q}$ is a \hyperref[@attachements]{minor model} of a shallow vortex grid of order $2t$ as desired. 
\end{proof}

\subsection{Excluding a shallow vortex minor}\label{@harmonized}

In the previous section we have seen that, under the absence of a shallow vortex grid, any vortex can be completely separated from the rest of a $\Sigma$-decomposition with a small set of vertices.
Towards a complete structural description of graphs excluding a fixed shallow vortex minor we need to push this idea one step further.
One way to do this would be to directly evoke the global structure theorem for $H$-minor-free graphs of Robertson and Seymour and afterwards remove all vortices while slightly increasing the apex set.
However, this approach has the problem that the process of ``killing'' the vortices has the potential to change the decomposition within the collection of subtrees that attached to the linear decomposition of a vortex.
To avoid the backtracking such an approach would require, it is more convenient to instead give a slightly altered version of the proof of the global structure theorem from \cite{kawarabayashiTW21quickly} which builds the desired decomposition inductively.

What remains of this section is now dedicated to the proof of \autoref{@guarantors}.
As described above, we heavily lean on the proof of the global structure theorem from \cite{kawarabayashiTW21quickly} which itself is based on a similar proof in \cite{diestel2012excluded} and inspired by a preliminary version of the original theorem that can be found in \cite{robertson1991graph}.
The central tool for these proofs is an object dual to small treewidth called a \emph{tangle}.

\begin{definition}[Tangle]\label{@explanation} 
Let $G$ be a graph and $k$ be a positive integer. 
We denote by $\mathcal{S}_k$ the collection of all tuples $(A,B)$ where $A,B\subseteq V(G)$ and $(A,B)$ is a separation of order $<k$ in $G.$ 
An \emph{orientation} of $\mathcal{S}_k$ is a set $\mathcal{O}$ such that for all $(A,B)\in\mathcal{S}_k$ exactly one of $(A,B)$ and $(B,A)$ belongs to $\mathcal{O}.$ 
 
A \emph{tangle} of order $k$ in $G$ is an orientation $\mathcal{T}$ of $\mathcal{S}_k$ such that for all $(A_1,B_1),(A_2,B_2),(A_3,B_3)\in\mathcal{T}$ we have $A_1\cup A_2\cup A_3\neq V(G).$
\end{definition}

Let $G$ and $H$ be graphs and let $p\coloneqq  \Abs{V(H)}.$
Let $\mathcal{T}$ be a tangle in $G.$
A \hyperref[@attachements]{minor model} of $H$ in $G$ is \emph{controlled} by $\mathcal{T}$ if for every separation $(A,B)\in\mathcal{T}$ with $\Abs{A\cap B}<p$ there does not exist a branch set of the model contained in $A\setminus B.$
For a set $Z\subseteq V(G)$ we denote by $\mathcal{T}-Z$ the tangle in $G-Z$ formed by the separations $(A,B)$ in $G-Z$ such that $(A\cup Z,B\cup Z)\in\mathcal{T}.$
Given a surface $\Sigma$ and a $\Sigma$-decomposition $ \Delta$ of $G,$ we say that the decomposition is \emph{$\mathcal{T}$-central} if for every separation $(A,B)\in\mathcal{T},$ there does not exist a cell $c\in C( \Delta)$ such that $B\subseteq V( \sigma_{ \Delta}(c)).$

We will need the following theorem.

\begin{proposition}[\!\!\cite{kawarabayashiTW21quickly}]\label{@senselessly} 
There exists an absolute constant $c$ which satisfies the following. 
Let $p\geq 1$ be a positive integer. 
Let $\mathcal{T}$ be a tangle of order $\Theta$ in a graph $G$ with 
\begin{align*} 
\Theta\geq p^{18\cdot 10^7p^{26}+c}. 
\end{align*} 
Then $\mathcal{T}$ either controls a $K_p$-minor or there exists $A\subseteq V(G),$ $\Abs{A}\leq 5p^2\cdot p^{10^7p^{26}},$ a surface $\Sigma$ of Euler-genus at most $p(p+1),$ and a $\Sigma$-decomposition $ \Delta$ of $G-A$ of breadth at most $2p^2$ and depth at most $p^{10^7p^{26}}$ which is $(\mathcal{T}-A)$-central. 
\end{proposition}

Instead of \autoref{@lovelessly}, we prove a slightly stronger statement which will imply our main theorem.

\autoref{@diskussion} is stronger than \autoref{@lovelessly} in two ways.
First, it gives much more intricate information on the different parameters of the tree decomposition produced including the information that any adhesion set can avoid the apex set of any of its neighbouring bags in at most three vertices.
Second, it essentially shows that it does not matter where we begin to construct the decomposition.
This is facilitated by the free choice of the set $Z$ which acts as a way to ``anchor'' the decomposition.
In each step of the inductive proof, we try to find a balanced separator for $Z$ of small order.
If we can find such a separator we may split the graph and apply induction on each of the pieces, otherwise $Z$ does not have a small balanced separator.
In the latter case we get a tangle of large order from $Z$ which allows us to use \autoref{@senselessly} to either find a large clique minor or a $\Sigma$-decomposition central to the tangle.
If we find the clique we are done as it contains a shallow vortex grid, otherwise we apply \autoref{@constitution} to each of the vortices in order to obtain a vortex-free $\Sigma$-decomposition $ \Delta.$
By definition of $\Sigma$-decompositions, after deleting the resulting apex set, any non-node vertex can be separated from the nodes of $ \Delta$ by deleting at most three vertices.
This allows us to continue the induction within each of the cells with non-empty interiors.
Such an approach to constructing decompositions for global decompositions has already been employed for the proof of Theorem 11.3 in \cite{RobertsonS91X} and was also used for the proof of the global structure theorem in \cite{kawarabayashiTW21quickly}.

\begin{theorem}\label{@diskussion} 
Let $t\geq 1$ be a positive integer and let $H$ be a minor of the shallow vortex grid of order $t.$ 
Let $G$ be a graph that does not contain $H$ as a minor. 
Let $\alpha \coloneqq  t^{18\cdot 10^8t^{78}+c}+96t^{10^8t^{78}+5}+1$ and $\gamma=4t^4+2t^2.$ 
Let $Z\subseteq V(G)$ be such that $\Abs{Z}\leq3\alpha .$ 
Then $G$ has a tree decomposition $(T,\beta )$ with a root $r\in V(T)$ and adhesion at most $\alpha $ such that for every $d\in V(T),$ the torso $G_d$ of $G$ at $d$ has a set $A_d\subseteq V(G_d)$ of size at most $4\alpha $ for which the graph $G_d-A_d$ has Euler-genus at most $\gamma.$ 
Moreover, we have $Z\subseteq A_r,$ for every $(d_1,d_2)\in E(T)$ we have $\Abs{(\beta (d_1)\setminus A_{d_1})\cap(\beta (d_2)\setminus A_{d_2})}\leq 3,$ and if $\Abs{(\beta (d_1)\setminus A_{d_1})\cap(\beta (d_2)\setminus A_{d_2})}=3$ and $\beta (d_1)$ is larger than $4\alpha ,$ then $(\beta (d_1)\setminus A_{d_1})\cap(\beta (d_2)\setminus A_{d_2})$ induces a triangle in $G_{d_1}-A_{d_1}$ which bounds a face.
\end{theorem}

\begin{proof} 
Let us assume the assertion is false. 
We fix $G,$ $t$ and $Z$ to form a counter example minimizing $\Abs{G}+\Abs{G\setminus Z}.$
 
Observe that, in case $t=1$ we have that $H$ is a single crossing minor and thus the claim follows immediately from the main result of \cite{RobertsonS93exclu}. 
Hence, we may assume $t\geq 2.$ 
Furthermore, we may assume $\Abs{G}>4\alpha $ as otherwise the trivial tree decomposition on a tree with a single vertex $r$ and $\beta (r)=V(G)$ and $A_r=V(G)$ would satisfy our claim. 
Moreover, from our minimality assumptions it follows that $\Abs{Z}=3\alpha $ since otherwise we could add another vertex to $Z$ and find a smaller counter example. 
 
\textbf{Claim 1.} For all separations $(X_1,X_2)$ of order less than $\alpha $ we have $\Abs{X_i\cap Z}\leq \Abs{X_1\cap X_2}$ for exactly one $i\in[2].$ 
 
\emph{Proof of Claim 1.} Let $(X_1,X_2)$ be a separation of order less than $\alpha .$ 
If $\Abs{X_i\cap Z}\leq \Abs{X_1\cap X_2}<\alpha $ we would have $\Abs{Z}<2\alpha $ contradicting our assumption that $\Abs{Z}=3\alpha .$ 
 
Hence, it suffices to show that $\Abs{X_i\cap Z}>\Abs{X_1\cap X_2}$ cannot hold for both $i\in[2].$ 
Towards a contradiction let us assume that the inequality holds for both $i\in[2].$ 
Note that we must have that $(X_1,X_2)$ is non-trivial. 
For each $i\in[2]$ let $Z_i\coloneqq  (Z\cap X_i)\cup (X_1\cap X_2).$ 
By minimality the theorem holds for the two graphs $G_i\coloneqq  \InducedSubgraph{G}{X_i}$ with the set $Z_i$ respectively. 
So for each $i\in[2]$ there exists a tree decomposition $(T_i,\beta _i)$ with root $r_i$ such that for every $d\in V(T_i)$ the torso $G_{i,d}$ of $G_i$ at $d$ has an apex set $A_{i,d}$ of size at most $4\alpha $ such that $G_{i,d}-A_{i,d}$ has Euler-genus at most $\gamma.$ 
Let $T$ be the tree formed by introducing a new vertex $r$ to $T_1\cup T_2$ and joining it with edges to the vertices $r_1$ and $r_2.$ 
Let $\beta (r)\coloneqq  Z\cup(X_1\cap X_2)$ and $\beta (d)\coloneqq  \beta _i(d)$ if $d\in V(T_i)$ for all $d\in V(T_1)\cup V(T_2).$ 
Note that $Z_i\subseteq \beta (r_i)$ by assumption. 
As $\Abs{X_1\cap X_2}<\alpha $ and thus $\Abs{\beta (r)}<4\alpha $ it follows that $(T,\beta )$ is a tree decomposition for $G$ of adhesion at most $\alpha $ such that the torso at every vertex of $T$ has Euler-genus at most $\gamma$ after deleting at most $4\alpha $ vertices. 
This means that $G$ could not have been a counter example to our assertion in the first place and we have obtained a contradiction.\hfill$\blacksquare$ 
 
We may now consider the collection $\mathcal{T}$ of all separations $(X,Y)$ of order less than $\alpha $ in $G$ such that $\Abs{Z\cap Y}>\alpha .$ 
Recall that $\mathcal{S}_{\alpha }$ is the family of all separations of order less than $\alpha $ in $G.$ 
It follows from Claim 1 that $\mathcal{T}$ is in fact an orientation of $\mathcal{S}_{\alpha }.$ 
Moreover, given three separations $(X_1,Y_1),(X_2,Y_2),(X_3,Y_3)\in\mathcal{T},$ as each $Y_i$ contains at most $\alpha -1$ vertices of $Z$ and $\Abs{Z}=3\alpha $ we have $V(G)\neq Y_1\cup Y_2\cup Y_3$ and thus $\mathcal{T}$ is a tangle of order $\alpha .$ 
 
As $H$ is a minor of the shallow vortex grid of order $t,$ which itself has $2t^2$ vertices and thus is contained in $K_{2t^2},$ $G$ cannot have a $K_{2t^2}$-minor. 
Hence, by \autoref{@senselessly} there exists a surface $\Sigma$ of Euler-genus at most $2t^2(2t^2+1)=\gamma,$ a subset $A'\subseteq V(G)$ with $\Abs{A'}\leq5(2t^2)^2\cdot(2t^2)^{10(2t^2)^7(2t^2)^{26}},$ and a $\Sigma$-decomposition $\rho$ of $G-A'$ of breadth at most $2(2t^2)^2=8t^4$ and depth at most $(2t^2)^{10^7(2t^2)^{26}}\leq t^{10^8t^{78}}$ which is $\mathcal{T}-A'$-central. 
Moreover, by applying \autoref{@mouthpiece} we may further assume that for each vortex cell $c\in C(\rho)$ there exists a nest $\mathcal{C}_c$ in $\rho$ around the unique disk $ \Delta\in\mathcal{D}$ corresponding to $c$ of order $10^{21}(2t^2)^{100}.$ 
Also, for each vortex cell $c\in C(\rho),$ if $ \Delta_c\subseteq\Sigma$ is the disk bounded by the trace of $C_{10^{21}(2t^2)^{100}}\in\mathcal{C}_c,$ then for each pair of distinct vortex cells $c,c'\in C(\rho)$ we have that $ \Delta_c\cap  \Delta_{c'}=\emptyset.$ 
If we use \autoref{@transformation} we get, in addition to the above properties, that for each vortex cell $c\in C(\rho)$ the rendition $(\rho_{\mathcal{C}_c},H_{\mathcal{C}_c},\Omega_{\mathcal{C}_c})$ of the nest $\mathcal{C}_c$ is \hyperref[@maidservants]{$(2t^2)^{10^7(2t^2)^{26}}\leq t^{10^8t^{78}}$-tight}. 
 
We now break $G$ into subgraphs based on $\rho.$ 
By the minimality of $G,$ each subgraph has a tree decomposition with the desired properties that can be attached to the part of $G$ that is properly drawn on $\Sigma.$ 
The only difference between this proof and the one from \cite{kawarabayashiTW21quickly} is the way we treat vortices. 
In the end, all decompositions we obtain for the subgraphs will be combined into one tree decomposition for the entirety of $G,$ resulting in a contradiction to $G$ being a counter example. 
 
Let us start by treating the vortices. 
Let $c\in C(\rho)$ be an arbitrary vortex cell. 
Then there exists a nest $\mathcal{C}_c=\Set{C_1,\dots,C_{10^{21}(2t^2)^{100}}}$ of order $10^{21}(2t^2)^{100}$ and a disk $ \Delta_{\mathcal{C}_c}$ bound by the trace of $C_{10^{21}(2t^2)^{100}}$ such that the rendition $(\rho_{\mathcal{C}_c},H_{\mathcal{C}_c},\Omega_{\mathcal{C}_c})$ is \hyperref[@maidservants]{$t^{10^8t^{78}}$-tight}. 
By \autoref{@constitution} we either find a shallow vortex grid of order $t$ as a minor, and thus we obtain an $H$ minor in $G$ which is impossible, or there exists a separation $(A_c,B_c)$ of order at most $12\cdot t^{10^8t^{78}}(t-1)$ with $V(\Omega_{\mathcal{C}_c})\subseteq A$ such that $(\InducedSubgraph{H_{}\mathcal{C}_c}{A},\Omega_{\mathcal{C}_c}')$ has a vortex-free rendition in the disk. 
Let $S_c\coloneqq  A_c\cap B_c.$ 
Since $G$ is $H$-minor-free we can find the separation $(A_{c'},B_{c'})$ for every vortex in $\rho.$ 
Moreover, $B_{c'}\cap B_{c''}=\emptyset$ for all distinct vortices $c',c''\in C(\rho).$ 
Hence, by removing the union of all $B_{c'}$ from $G$ we obtain a vortex-free $\Sigma$-decomposition for the resulting graph $G'.$ 
Let $Z$ be the union of all $S_{c'}$ and $B$ be the union of all $B_{c'}$ over all vortex cells of $\rho.$ 
Since there are at most $8t^4$ vortices we have that  
\begin{align*} 
\Abs{Z}\leq 8t^4\cdot 12\cdot t^{10^8t^{78}}(t-1)\leq 96\cdot t^{10^8t^{78}+5}.
\end{align*} 
Now let $G'\coloneqq  G-(B\setminus S),$ moreover, let $A\coloneqq  A'\cup S.$ 
Notice that $\Abs{A}\leq \alpha -1.$ 
Then $\rho$ contains a vortex-free $\Sigma$-decomposition $\rho'$ of $G'-A.$ 
 
It follows that for every vortex cell $c$ we have that $\Abs{B_c\cap Z}\leq \alpha -1.$ 
Let $H_c$ be the subgraph of $G$ induced by $B_c\cup A\cup(B_c\cap Z).$ 
By the minimality of $G,$ there exists a tree decomposition $(T_c,\beta _c)$ of adhesion at most $\alpha $ such that the torso of every bag has Euler-genus at most $\gamma$ after the deletion of an apex set of size at most $4\alpha .$ 
Moreover, $T_c$ has a root $r_c$ such that $A\cup (B_c\cap Z)\in\beta _c(r_c)$ and it is a subset of the apex set of the torso of $H_c$ at $r_c.$ 
 
For every $o\in C(\rho')$ that is not a vertex let $H_o$ be the subgraph of $G$ induced by $V( \sigma(o))\cup A.$ 
Moreover, let $B_o$ be the collection of vertices of $G'$ that are drawn on the boundary of $o.$ 
Notice that $\Abs{B_o}\leq 3.$ 
As before $A\cup B_o$ is a separator of order at most $\alpha -1$ separating the vertices of $H_o$ from the rest of $G.$ 
Hence, $H_o$ may contain at most $\alpha -1$ vertices of $Z.$ 
So by the minimality of $G,$ there exists a tree decomposition $(T_o,\beta _o)$ of adhesion at most $\alpha $ such that the torso of every bag has Euler-genus at most $\gamma$ after the deletion of an apex set of size at most $4\alpha .$ 
Moreover, $T_o$ has a root $r_o$ such that $A\cup (V(H_o)\cap Z)\in \beta _o(r_o)$ and it is a subset of the apex set of the torso of $H_o$ at $r_o.$ 
 
Now let $G''$ be the graph obtained from $G'$ by deleting the vertex set $V(H_o)\setminus (A\cup Z\cup B_o)$ for every $o\in C(\rho')$ that is not a vertex and adding an edge between every pair of vertices of $A\cup Z$ as well as between every pair of vertices of $B_o$ for every $o.$ 
 
Let $T$ be a tree obtained by introducing a new vertex $r$ and joining it with an edge to every $r_o$ for every $o\in C(\rho')$ that is not a vertex and every $r_c$ where $c\in C(\rho)$ is a vortex. 
Let $\beta (r)\coloneqq  V(G''),$ $\beta (d)=\beta _o(d)$ if $d\in V(T_o)$ for some $o,$ and $\beta (d)=\beta _c(d)$ if $d\in V(T_c)$ for some $c.$ 
Then $(T,\beta )$ is a tree decomposition for $G$ with adhesion at most $\alpha .$ 
For all $d\neq r$ the torso of $G$ at $d$ has Euler-genus at most $\gamma$ after deleting an apex set of size at most $4\alpha ,$ we have $Z\subseteq \beta (r),$ and $G''$ has Euler-genus at most $\gamma$ after removing the vertices of $A\cup Z$ with $\Abs{A\cup Z}\leq 4\alpha .$  
Hence, $(T,\beta )$ is the desired tree decomposition for $G$ and our proof is complete.
\end{proof}

In \cite{kawarabayashiTW21quickly} it is mentioned that, in time $\mathcal{O}(f(\Abs{V(H)})\cdot \Abs{G}^3),$ where $f\colon \Bbb{N}\rightarrow\Bbb{N}$ is some computable function, one either finds the $\Sigma$-decomposition of \autoref{@mouthpiece} or a $K_{\Abs{V(H)}}$ minor for any graph $H.$
Since all other results and constructions used in the proof of \autoref{@diskussion} can be obtained in cubic time, we obtain the following corollary.

\begin{corollary}\label{@incinerated} 
There exists a computable function $f\colon\Bbb{N}\rightarrow\Bbb{N}$ such that for every positive integer $t,$ every graph $H$ which is a minor of the shallow vortex grid of order $t,$ and every graph $G,$ we can find in time $\mathcal{O}(f(t)\cdot \Abs{G}^3)$ either a \hyperref[@attachements]{minor model} of $H$ in $G,$ or a tree decomposition $(T,\beta )$ with a root $r\in V(T)$ and adhesion at most $\alpha $ such that for every $d\in V(T),$ the torso $G_d$ of $G$ at $d$ has a set $A_d\subseteq V(G_d)$ of size at most $4\alpha $ for which the graph $G_d-A_d$ has Euler-genus at most $\gamma,$ where $\gamma$ and $\alpha $ are defined as in \autoref{@diskussion}. 
Moreover, for every $(d_1,d_2)\in E(T)$ we have $\Abs{(\beta (d_1)\setminus A_{d_1})\cap(\beta (d_2)\setminus A_{d_2})}\leq 3,$ and if $\Abs{(\beta (d_1)\setminus A_{d_1})\cap(\beta (d_2)\setminus A_{d_2})}=3$ and $\beta (d_1)$ is larger than $4\alpha ,$ then $\beta (d_1)\setminus A_{d_1})\cap(\beta (d_2)\setminus A_{d_2})$ induces a triangle in $G_{d_1}-A_{d_1}$ which bounds a face.
\end{corollary}

Recall that, from \autoref{@diskussion}, in \autoref{@incinerated}, $\alpha (x)=\mathsf{poly}(x)$ and $\gamma(x)=2^{\mathsf{poly}(x)}.$

\subsection{Proof of the combinatorial lower bound}\label{@inevitably}

In this subsection we establish that \autoref{@lovelessly} is tight in the sense that the exclusion of a graph $H$ which is not a shallow vortex minor can never guarantee the absence of vortices while, at the same time, provide a global bound on the Euler-genus of all torsos.
This shows that the class of shallow vortex minors is exactly the class of all graphs whose exclusion as a minor allows for a version of the Graph Minors Structure Theorem without vortices.

Before we proceed to the proof we give some more definitions and make some observations.

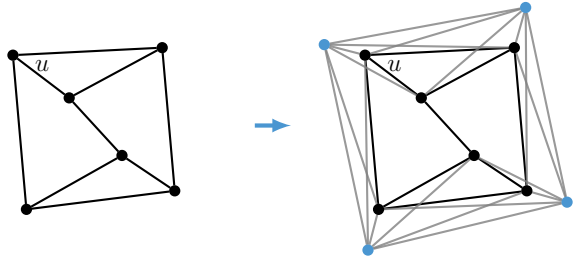
\begin{figure}[H] 
\centering 
\scalebox{0.8}{\begin{tikzpicture}[scale=1.3] 
\pgfdeclarelayer{background} 
\pgfdeclarelayer{foreground} 
\pgfsetlayers{background,main,foreground} 
 
\node [v:ghost] (Cleft) {}; 
\node [v:ghost,position=0:5mm from Cleft] (Cright) {}; 
\node [v:ghost,position=180:20mm from Cleft] (CL) {}; 
\node [v:ghost,position=0:20mm from Cright] (CR) {}; 
 
\node (aL) [v:main,position=135:5mm from CL] {}; 
\node (bL) [v:main,position=310:5mm from CL] {}; 
\node (cL) [v:main,position=320:13mm from CL] {}; 
\node (dL) [v:main,position=230:14mm from CL] {}; 
\node (eL) [v:main,position=140:14mm from CL] {}; 
\node (fL) [v:main,position=50:13mm from CL] {}; 
 
\node (uL) [v:ghost,position=340:4mm from eL] {\large $u$}; 
 
\node (aR) [v:main,position=135:5mm from CR] {}; 
\node (bR) [v:main,position=310:5mm from CR] {}; 
\node (cR) [v:main,position=320:13mm from CR] {}; 
\node (dR) [v:main,position=230:14mm from CR] {}; 
\node (eR) [v:main,position=140:14mm from CR] {}; 
\node (fR) [v:main,position=50:13mm from CR] {}; 
 
\node (vcR) [v:main,position=327:18mm from CR,color=celestialblue] {}; 
\node (vdR) [v:main,position=237:19mm from CR,color=celestialblue] {}; 
\node (veR) [v:main,position=147:19mm from CR,color=celestialblue] {}; 
\node (vfR) [v:main,position=57:18mm from CR,color=celestialblue] {}; 
 
\node (uR) [v:ghost,position=340:4mm from eR] {\large $u$}; 
 
\begin{pgfonlayer}{background} 
 
\draw[e:thick,->,color=celestialblue] (Cleft) to (Cright); 
 
\draw[e:main] (aL) to (bL); 
\draw[e:main] (bL) to (cL); 
\draw[e:main] (bL) to (dL); 
\draw[e:main] (cL) to (dL); 
\draw[e:main] (dL) to (eL); 
\draw[e:main] (eL) to (fL); 
\draw[e:main] (fL) to (cL); 
\draw[e:main] (aL) to (eL); 
\draw[e:main] (aL) to (fL); 
 
\draw[e:main] (aR) to (bR); 
\draw[e:main] (bR) to (cR); 
\draw[e:main] (bR) to (dR); 
\draw[e:main] (cR) to (dR); 
\draw[e:main] (dR) to (eR); 
\draw[e:main] (eR) to (fR); 
\draw[e:main] (fR) to (cR); 
\draw[e:main] (aR) to (eR); 
\draw[e:main] (aR) to (fR); 
 
\draw[e:main,opacity=0.8,color=gray] (cR) to (vcR); 
\draw[e:main,opacity=0.8,color=gray] (dR) to (vdR); 
\draw[e:main,opacity=0.8,color=gray] (eR) to (veR); 
\draw[e:main,opacity=0.8,color=gray] (fR) to (vfR); 
 
\draw[e:main,opacity=0.8,color=gray] (bR) to (vcR); 
\draw[e:main,opacity=0.8,color=gray] (bR) to (vdR); 
\draw[e:main,opacity=0.8,color=gray] (aR) to (veR); 
\draw[e:main,opacity=0.8,color=gray] (aR) to (vfR); 
 
\draw[e:main,opacity=0.8,color=gray] (cR) to (vdR); 
\draw[e:main,opacity=0.8,color=gray] (vcR) to (dR); 
\draw[e:main,opacity=0.8,color=gray] (vcR) to (vdR); 
 
\draw[e:main,opacity=0.8,color=gray] (dR) to (veR); 
\draw[e:main,opacity=0.8,color=gray] (vdR) to (eR); 
\draw[e:main,opacity=0.8,color=gray] (vdR) to (veR); 
 
\draw[e:main,opacity=0.8,color=gray] (eR) to (vfR); 
\draw[e:main,opacity=0.8,color=gray] (veR) to (fR); 
\draw[e:main,opacity=0.8,color=gray] (veR) to (vfR); 
 
\draw[e:main,opacity=0.8,color=gray] (fR) to (vcR); 
\draw[e:main,opacity=0.8,color=gray] (vfR) to (cR); 
\draw[e:main,opacity=0.8,color=gray] (vfR) to (vcR); 
 
\end{pgfonlayer} 
\end{tikzpicture}} 
\caption{The ring blowup of some cross-free drawing on a disk of a planar graph.} 
\label{@addressees}
\end{figure}

\begin{definition}[Ring Blowup Graphs]\label{@categorial}
Let $\gamma$ is the cross-free drawing on a disk $ \Delta$ of some (planar) graph $G.$
The face of $\gamma$ whose closure contains the boundary of $ \Delta$ is called \emph{external face} of $\gamma.$
Let $Q$ be the vertices of $G$ that are incident to this (unique) external face.  
 
The \emph{ring blowup} of $\gamma$ is the graph $G'$ obtained from $G$ by introducing for each $u\in Q$ a new vertex $v_u$ together with the edge $uv_u,$ the edges $\CondSet{wv_u}{w\in N_G(u)},$ and the edges $\CondSet{v_uv_w}{w\in N_G(u)\cap Q}.$ 
A graph $G$ is called a \emph{ring blowup graph} if it is a subgraph of some graph isomorphic to the ring blowup of a cross-free drawing on a disk $ \Delta$ of some planar graph. 
For instance $K_{7}$ is the  ring blowup of any cross-free drawing of  $K_{4}$ on a disk $ \Delta$  
(see \autoref{@addressees} for another example).

\begin{figure}[h] 
\centering
{\begin{tikzpicture}[scale=1.1] 
\pgfdeclarelayer{background} 
\pgfdeclarelayer{foreground} 
\pgfsetlayers{background,main,foreground} 
 
\foreach \r in {1,...,6} 
\foreach \x in {1,...,12} 
{  
\pgfmathsetmacro\Radius{\r*4+10} 
\node[v:main] () at (\x*30: \Radius mm){}; 
} 
 
\foreach \x in {1,...,12} 
{ 
\node[v:main,color=celestialblue] () at (\x*30:10mm){}; 
} 
 
\begin{pgfonlayer}{background} 
\foreach \r in {1,...,6} 
{  
\pgfmathsetmacro\Radius{\r*4+10} 
\draw[e:main] (0,0) circle ({\Radius mm}); 
} 
 
\foreach \x in {1,...,12} 
{ 
\draw[e:main] (\x*30:14mm) to (\x*30:34mm); 
\draw[e:main,color=gray] (\x*30:10mm) to (\x*30:14mm); 
\draw[e:main,color=gray,bend left=55] (\x*30:10mm) to (\x*30:18mm); 
\draw[e:main,color=gray,bend right=15] (\x*30:10mm) to (\x*30+30:14mm); 
\draw[e:main,color=gray,bend left=15] (\x*30+30:10mm) to (\x*30:14mm); 
} 
 
\draw[e:main,color=gray] (0,0) circle ({10mm}); 

\end{pgfonlayer} 
 
\end{tikzpicture}} 
\caption{A $(6\times 12)$-cylindrical grid ring blowup.} 
\label{@attributes}
\end{figure}
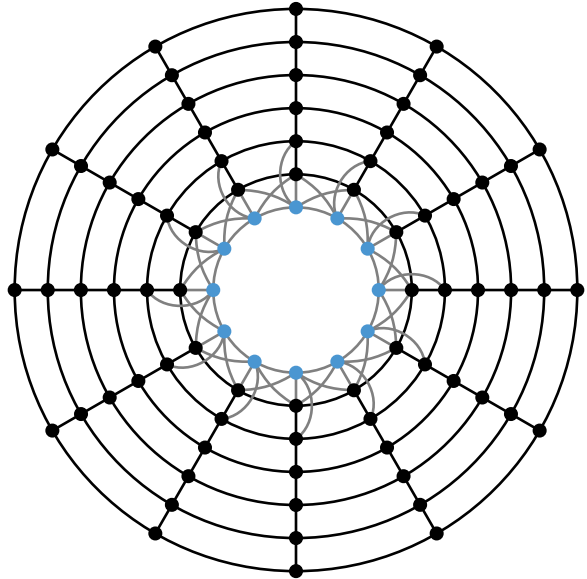 

Recall that, given $t\geq 1$ and $s\geq 3,$ we define the $(t\times s)$-cylindrical grid as the graph obtained if we take 
a $(t\times s)$-grid and then add the edge $(j,1)(j,s),$ for every $j\in[t].$
The \emph{extremal cycles} of a $(t\times s)$-cylindrical grid are 
the two cycles  that have all vertices of degree three (in \autoref{@industrial},  these cycles are depicted in \textcolor{CornflowerBlue}{blue}).

A \emph{standard cross-free drawing on a disk $ \Delta$} of the $(t\times s)$-cylindrical grid is one where of the extremal cycles is drawn on the boundary of $ \Delta$.
We say that a graph is a \emph{$(t\times s)$-cylindrical grid ring blowup}
if it is the ring blow up of a standard cross-free drawing on a disk $ \Delta$ of the $(t\times s)$-cylindrical grid. \end{definition}  

\begin{lemma}
\label{@incessantly}
There is a function $g:\Bbb{N}^{2}\to\Bbb{N}$ such that every $(t\times s)$-\hyperref[@consistently]{cylindrical grid ring blowup} is a minor of $\mathscr{S}_{g(t,s)}.$
\end{lemma}

\begin{proof}
The proof is depicted in \autoref{@geschichte}.
\end{proof}

\begin{figure}[th] 
\scalebox{0.78}{\includegraphics{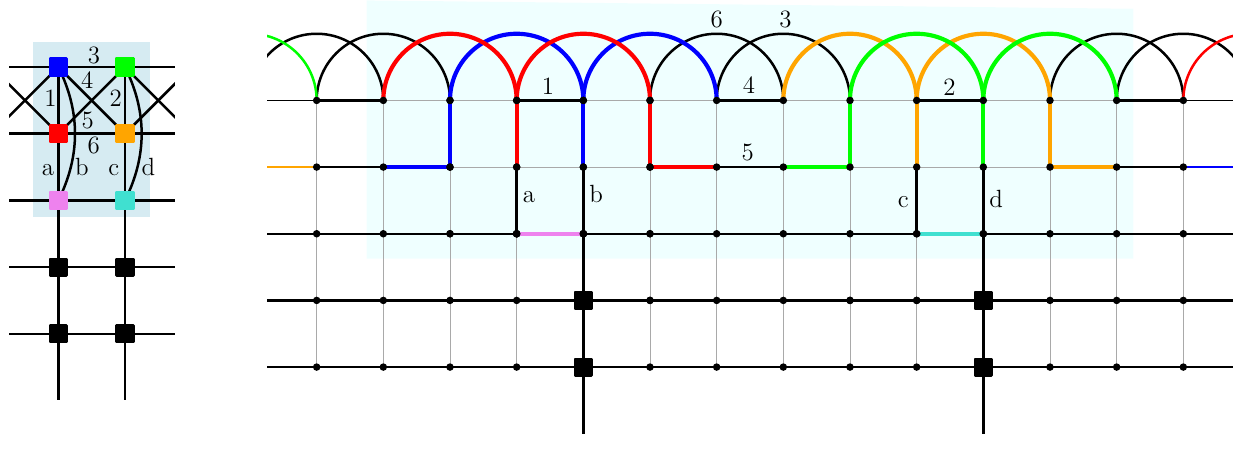}} 
\vspace{-3mm} 
\caption{A visualization of the proof of \autoref{@incessantly}. The leftmost picture is seen as a portion of the blown up part of the $(t\times s)$-\hyperref[@consistently]{cylindrical grid ring blowup} (see \autoref{@attributes}) and the right part is how this portion is routed through some big enough shallow vortex grid.} 
\label{@geschichte}
\end{figure}

\begin{definition}[The graph $Q_{s,r}$]
Given $s,r\geq 1$ we define the graph $Q_{s,r}$ as follows: 
We first consider the $(s\times s\cdot r)$-grid where we denote  
by $x_{1},\ldots,x_{s\cdot r}$ the vertices of some of the paths of length $s\cdot r$ where all internal vertices have degree three and the endpoints have degree two.
We then introduce $r$ pairs of vertices $\{t_{1},t_{1}',\ldots,t_{r},t_{r}'\}$ (we call them \emph{terminals}) and for every $i\in[r]$ we make $t_{i}$ and $t'_{i}$ 
adjacent with all the vertices in $\{x_{(i-1)+1},\ldots,x_{(i-1)+s}\}.$ For example, the graph $Q_{5,4}$ is depicted in \autoref{@germinating}, where the terminals of $Q_{s,r}$ are the squares vertices.
\end{definition}
Using \autoref{@attributes} and \autoref{@germinating}, one may easily verify the following:
\begin{lemma}
\label{@compensates}
For every $s,r\geq 1,$ $Q_{s,r}$ is a minor of an $((s+1)\times (r\cdot s))$-\hyperref[@consistently]{cylindrical grid ring blowup}.
\end{lemma}

We are now ready to prove the main result of this section.

\begin{figure}[h] 
\begin{center}
\hspace{-2.8cm}\scalebox{.76}{\input{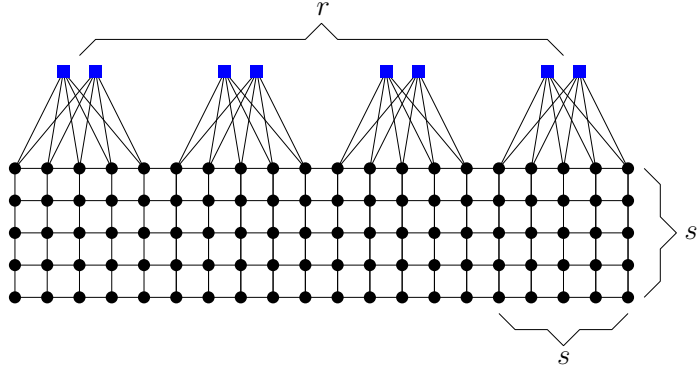}}
\end{center} 
\vspace{-4mm} 
\caption{The graph $Q_{s,r}$ (in particular, here $s=5$ and $t=4$).} 
\label{@germinating}
\end{figure}

\begin{lemma}
\label{@prophecies} 
There exists a function $f\colon\mathbb{N}\to\mathbb{N}$ such that for every $k\in\mathbb{N}$ and every graph $G,$ if $\mathsf{p}(G)\geq f(k),$ then ${\mathsf{p}}_{\mbox{\scriptsize \texttt{\tt -\!\!\! g\!\!\! a}}}(G)\geq k.$
In other words we have $\mathsf{p}\preceq {\mathsf{p}}_{\mbox{\scriptsize \texttt{\tt -\!\!\! g\!\!\! a}}}.$
\end{lemma}

\begin{proof}  
Let $g:\Bbb{N}\to\Bbb{N}$ be a function such that $\genus{K_{g(k)}}>k$ and $\genus{K_{3,g(k)}}>k$ (such a function exist because of the standard estimations on the Euler-genus of  
$K_{k}$ and $K_{3,k},$ see e.g., \cite{Harary91Grap}).
We consider the graph $Q_{s,r}$ where $s=g(k)+k$ and $r=k+1.$ 
By \autoref{@incessantly} and \autoref{@compensates}, there is a function $f:\Bbb{N}\to\Bbb{N}$ such that $Q_{s,r}$ is a minor of $\mathscr{S}_{f(k)}.$ 
Also, it is easy to verify that ${\mathsf{p}}_{\mbox{\scriptsize \texttt{-\!\!\! g\!\!\! a}}}$ is minor-closed.  
By the definition of $\mathsf{p},$ if, for some graph $G,$ $\mathsf{p}(G)\geq f(k),$ then $\mathscr{S}_{f(k)}$ is a minor of $G,$ which implies that $\mathsf{p}_{\mbox{\scriptsize \texttt{-\!\!\! g\!\!\! a}}}(G)\geq\mathsf{p}_{\mbox{\scriptsize \texttt{-\!\!\! g\!\!\! a}}}(\mathscr{S}_{f(k)})\geq \mathsf{p}_{\mbox{\scriptsize \texttt{-\!\!\! g\!\!\! a}}}(Q_{s,r}).$ 
It now remains to prove that $\mathsf{p}_{\mbox{\scriptsize \texttt{-\!\!\! g\!\!\! a}}}(Q_{s,r})> k.$  
 
Suppose to the contrary that $Q_{s,r}$ has a tree decomposition $(T,\beta )$ where every torso $G_{t}$ contains an apex set $A$ where $|A|\leq k$ and $\genus{G_{t}-A}\leq k.$  
 
We first rule out the possibility for two terminals $x,x'$ of $Q_{s,r}$ not to belong in the same torso of $(T,\beta ).$
Indeed, suppose to the contrary that there exist distinct $t,t'\in V(T)$ such that $x\in \beta (t)\setminus \beta (t'),$ $x'\in \beta (t')\setminus \beta (t).$ 
Notice that there are $s=g(k)+k$ internally disjoint paths in $Q_{s,r}$ between 
$x$ and $x',$ which implies that $\beta (t)\cap \beta (t')\geq g(k)+k.$ As $(T,\beta )$ 
is a tree decomposition, this implies that $\beta (t)\cap \beta (t')$ is a subset of some  
adhesion set of $G_{t}$ that induces a clique of size $\geq g(k)+k$ in $G_{t}.$ 
Let $A$ be the apex set of $G_{t}.$ The fact that $|A|\leq k$ implies that  
$G_{t}-A$ contains a clique of size $\geq g(k),$ therefore $\genus{G_{t}-A}\geq \genus{G_{g(k)}}> k,$ a contradiction. 
We conclude that there is some $t\in V(T)$ such that all $2r$ 
terminals of $Q_{s,r}$ belong in $G_{t}.$  
 
Notice now that the property that every two terminals are connected by $s$ internally disjoint paths in $Q_{s,r}$ holds for $G_{t}$ as well because their subpaths that do not belong to $G_{t}$ can be replaced by edges of cliques induced by the adhesion sets of $G_{t}.$ 
As $|A|\leq k$ and $r=k+1,$ at least one pair $x,x'$ of twin terminals is still present in $G_{t}- A$ as well as a third terminal $x''.$ Notice that $x''$ is connected with $x$ (as well as with $x'$) with $s=g(k)+k$ internall disjoint paths of $G_{t}$ and, among them, at least $g(k)$ also belong to $G_{t}-A.$
As $x$ and $x'$ are twins, these two collections of $g(k)$ paths each differ only on the edges incident to $x$ and $x'.$
Therefore, we may use them in order to find $K_{3,g(k)}$ as a minor of $G_{t}-A,$ therefore $\genus{G_{t}-A}\geq \genus{K_{k,g(k)}}> k,$ a contradiction.
\end{proof}

\section{Algorithmic consequences}\label{@homogenous}
With our structural results in place we are now able to discuss how to exploit the decomposition guaranteed by the absence of a large shallow vortex grid as a minor.
This section is divided into two parts.
First, in \autoref{@hairdressers}, we design a dynamic programming algorithm on the decomposition provided by \autoref{@diskussion}.
Then, in \autoref{@stiidentenzeitung}, we show that, for proper minor-closed classes, this result can be seen as best possible.

\subsection{Dynamic programming for the generating function}\label{@hairdressers}

We present the dynamic programming necessary for the algorithm of \autoref{@unencumbered} in iterative steps. The algorithm will be performed in a bottom up fashion along the decomposition of \autoref{@diskussion}.
First we describe the tables we compute in each step for some tuple $(G,\mathbf{p},X),$ where $G$ is a graph, $\mathbf{p}$ is a {particular} labelling of the edges of $G,$ and $X$ is some specified set of ``boundary'' vertices.
Then we provide a subroutine that computes the table of $(G,\mathbf{p},X)$ from the table of some $(H,\mathbf{p},Y)$ where the set $(V(G)\setminus V(H))\cup X\cup Y$ is of bounded size.
The next step is the introduction for a subroutine that produces a table for some graph of bounded Euler-genus which is then extended to the case where we encounter a bag of unbounded size in our decomposition, but have already computed all necessary tables for the subtrees below it.
The final piece of the algorithm will then be a procedure that merges the tables of several subtrees joined at a common adhesion set.

Pleace note that our choice to use fractional weights for the edges can be relaxed to rational weights by performing polynomial interpolation.
Moreover, the gadgets we implement in our proofs below to propagate partial solutions onto the pieces of bounded Euler genus but of undbounded size can possibly be replaced by Valiant-style matchgates as it is typically done in \cite{valiant2008holographic,CurticapeanX15param}.

\paragraph{The generating function of perfect matchings}

Our goal is to not only count the perfect matchings of a graph, {but to also differentiate between perfect matchings of different weight.}
This will be captured by the generating function of perfect matchings as explained below.

For our purposes it will be convenient to allow for a more general type of edge weighting 
where edges are labelled by fractions of integer polynomials. This {will allow us to encode parts of the tables of our dynamic programming directly on the edges}.
For this purpose let $\mathbb{Z}[x]$ be the set of all polynomials with integer coefficients and let
$$\Bbb{Z}(x)\coloneqq \CondSet{\frac{p}{q}}{p,q\in\Bbb{Z}[x],q\neq 0}.$$
Now let $\mathbf{p}\colon E(G)\rightarrow \Bbb{Z}(x),$ we call $(G,\mathbf{p})$ an \emph{labelled graph} and $\mathbf{p}$ is its \emph{labeling}.
If $(G,\mathbf{w})$ is an edge-weighted graph, we derive a labelled graph $(G,\mathbf{p}_{\mathbf{w}})$ from $(G,\mathbf{w})$ by setting, for every $e\in E(G),$  $$\mathbf{p}_{\mathbf{w}}(e)\coloneqq  x^{\mathbf{w}(e)}.$$
We will proceed to give general definitions in terms of labelings, but using the definition of $\mathbf{p}_{\mathbf{w}}$ one can easily derive the more conventional versions of these concepts.

Now assume we are given a labelled graph $(G,\mathbf{b})$ with a perfect matching $M.$
We express the total weight of $M$ under $\mathbf{p}$ as the monomial
\begin{align*} 
\MatchingMonomial{\mathbf{p}}{M}\coloneqq  \prod_{e\in M}\mathbf{p}(e).
\end{align*}

\begin{definition}[(labelled) Generating Function of Perfect Matchings]\label{@apocryphal} 
Let $(G,\mathbf{p})$ be a labelled graph.
The \emph{labelled generating function of the perfect matchings of $G$}, usually abbreviated as the \emph{$\mathbf{p}$-generating function} or the \emph{generating function}, is defined to be the polynomial 
\begin{align*} 
\GenerateMatchings{G,\mathbf{p}}\coloneqq  \sum_{M\in\Perf{G}}\MatchingMonomial{\mathbf{p}}{M}. 
\end{align*}  
Let $(G,\mathbf{w})$ be an edge-weighted graph, then the \emph{generating function of all weighted perfect matchings in $G$} is the polynomial 
\begin{align*} 
\GenerateMatchings{G,\mathbf{w}}\coloneqq  \sum_{M\in\Perf{G}}\MatchingMonomial{\mathbf{p}_{\mathbf{w}}}{M}. 
\end{align*}
\end{definition}

Our aim is to eventually compute the generating function of all weighted perfect matchings of an edge-weighted graph $(G,\mathbf{w}).$
For this we will compute $\GenerateMatchings{G,\mathbf{w}}$ from a series of partial generating functions of labelled subgraphs of $G$ with some specified set of ``boundary'' vertices, where the labelings are derived from $\mathbf{w}.$

Let $G$ be a graph with a perfect matching and $F\subseteq E(G)$ be some (not necessarily perfect) matching in $G.$
We say that a vertex $v\in V(G)$ is \emph{covered} by $F$ if $v$ is an endpoint of some edge in $F,$ the vertex $v$ is said to be \emph{exposed} by $F$ if it is not covered by it.
We say that $F$ is \emph{extendable} if there exists a perfect matching $M\in\Perf{G}$ such that $F\subseteq M.$
In other words, $F$ is extendable if it can be extended to a perfect matching of the entire graph.

Given a pair $(G,X)$ where $X\subseteq V(G)$ we say that a matching $F$ of $G$ is \emph{internally extendible} (or, simply, \emph{extendible}) in $(G,X)$ if $F$ is an extendable matching in the graph $G-(X\setminus V(F)).$
We also say that $F$ is \emph{aligned} with $(G,X)$ if it is internally extendible in $(G,X)$ and every edge of $F$
contains some endpoint in $X.$

Roughly speaking, $F$ being aligned with $(G,X)$ means that $F$ classifies exactly the matchings of $G$ which are extensions of $F,$ leave the vertices of $X\setminus V(F)$ exposed, but cover every other vertex $G-(X\setminus V(F)).$
Another way of explaining the idea behind aligned matchings is the following.
Let $G$ be a graph with a perfect matching, let $(A,B)$ be a separation of $G$ and let $H\coloneqq G[B].$
Consider the pair $(H,A\cap B).$
Now for every perfect matching $M$ of $G$ we can find the set $F_M\subseteq M$ of exactly those edges in $M$ with both endpoints in $H$ and at least one endpoint in $A\cap B.$
This allows us to define an equivalence relation on $\mathcal{M}(G)$ by saying that two perfect matchings $M$ and $M'$ are equivalent if $F_M=F_{M'}.$
Hence, each of these equivalence classes is uniquely defined by some set $F\subseteq E(H)$ which is aligned with $(H,A\cap B).$

Let us denote by $\Aligned{G,X}$ the collection of all matchings that are aligned with $(G,X).$

Please note that testing whether a given matching $F$ is extendable can be done in polynomial time by simply checking if $G-V(F)$ has a perfect matching using known polynomial time algorithms for maximum matching \cite{edmonds1965paths}.
It follows that also checking for the extendability of some pair $(F,X)$ as above is possible in polynomial time.

A pair $(G,X,\mathbf{p})$ where $(G,\mathbf{p})$ is a labelled graph and $X\subseteq V(G)$ is a set of vertices is called a \emph{(labelled) boundary graph}.
We say that $X$ is the \emph{boundary} of $(G,X,\mathbf{p}).$
Let $H\subseteq G$ be an induced subgraph of $G,$ and $Y\subseteq V(H)$ be a set of vertices in $H$ such that in $G-Y$ there is no edge with one endpoint in $H-Y$ and the other in $G-H.$
We call $(H,Y,\mathbf{p})$ an \emph{(labelled) boundary subgraph} of $(G,X,\mathbf{p}),$ with \emph{boundary} $Y.$
In a slight abuse of notation we use $\mathbf{p}$ in $(H,Y,\mathbf{p}),$ but actually mean the restriction $\mathbf{p}_{|_{E(H)}}$ of $\mathbf{p}$ to the edges of $H.$
Note that $(H,Y,\mathbf{p})$ is itself also a boundary graph.

\begin{definition}[Partial Generating Function]\label{@intelligibility} 
Let $(G,X,\mathbf{p})$ be a labelled boundary graph and $F\in\Aligned{G,X}.$ 
The \emph{partial generating function of $(G,X)$ under $F$} is defined to be 
\begin{align*} 
\GenerateMatchings{G,X,\mathbf{p},F}\coloneqq  (\prod_{e\in F} \mathbf{p}(e))\cdot\GenerateMatchings{G-V(F)-X,\mathbf{p}}. 
\end{align*}
We define the set
\begin{align*} 
\GenerateMatchingsBoundary{(G,X,\mathbf{p})}\coloneqq \CondSet{\GenerateMatchings{G,X,\mathbf{p},F}}{F\in\Aligned{H,X}}
\end{align*}
to be the collection of all partial generating functions of $(G,X,\mathbf{p})$ under its aligned matchings.
For every $F\in\Aligned{H,X}$ we use $\GenerateMatchingsBoundaryF{(G,X,\mathbf{p})}{F}$ to denote $\GenerateMatchings{G,X,\mathbf{p},F}.$
\end{definition}
Note that $\GenerateMatchingsBoundary{}$ is technically a mapping itself. 
We write its argument in the subscript to better distinguish $\GenerateMatchingsBoundary{(G,X,\mathbf{p})}$ and $\GenerateMatchingsBoundaryF{(G,X,\mathbf{p})}{F}.$

The definitions above correspond a set $\GenerateMatchingsBoundary{(G,X,\mathbf{p})}$ to every boundary graph $(G,X,\mathbf{p}).$
This collection of partial generating functions will serve as the table of our dynamic programming for $(G,X,\mathbf{p}).$

\paragraph{Bags of Bounded Size}
We start the discussion of our algorithm with a way to compute the table entries for our dynamic programming in the case where we are concerned with a single bag of bounded size and we are given the unified table for all of its children.
The lemma presented here could as well be regarded as a special case of the way we treat bags of unbounded size by simply assuming that the apex set equals the entire bag.
However, we deem this special case to be a nice illustration of how we compute our tables.

\begin{figure}[th]
\begin{center} 
\hspace{-.4cm}\scalebox{1}{\includegraphics{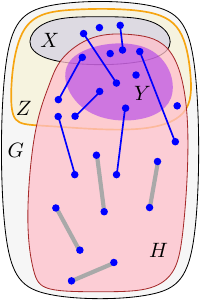}} 
\end{center} 
\vspace{-3mm} 
\caption{A visualization of the boundaried graphs $(G,X)$ and $(H,Y)$ and of the set $Z$ in the statement of \autoref{@celebration}. Also the matching $F$ (in turquoise) is aligned with $(G,Z).$} 
\label{@contrivance}
\end{figure}

\begin{lemma}\label{@celebration}
Let $k\geq 1$ be a positive integer.
Let $(G,X,\mathbf{p})$ be a labelled boundary graph and $(H,Y,\mathbf{p})$ be a labelled boundary subgraph of $(G,X,\mathbf{p})$ such that there exists a set $Z\subseteq V(G)$ with $X,Y\subseteq Z,$ $G-Z=H-Y,$ and $\Abs{Z}\leq k.$
There exists an algorithm that, given the set $\GenerateMatchingsBoundary{(H,Y,\mathbf{p})},$ computes the set $\GenerateMatchingsBoundary{(G,X,\mathbf{p})}$ in time $\Abs{G}^{\mathcal{O}(k)}.$
\end{lemma}

\begin{proof} 
Notice that $(G,Z,\mathbf{p})$ is also a labelled boundary graph. 
Let $\mathcal{F}$ be the collection of all matchings $F$ in $G$ such that $F\in\Aligned{G,Z}$ and observe that, with $\Abs{Z}\leq k,$ we have $\Abs{\Aligned{G,Z}}\in \mathcal{O}(\Abs{G}^{2k}).$ 
Moreover, it takes time $\mathcal{O}(\Abs{G}^c)$ to check for any such matching $F$ whether it is extendable in $G-(Z\setminus V(F)),$ for some constant $c.$ 
Hence, we can find $\mathcal{F},$ and $\Aligned{G,Z}$ as a consequence, in time $\mathcal{O}(\Abs{G}^{2k+c}).$
Using the same argument and the fact that $X,Y\subseteq Z,$ we are able to find the sets $\Aligned{G,X}$ and $\Aligned{H,Y}.$ (See \autoref{@contrivance} for a visualization of the boundaried graphs $(G,X)$ and $(H,Y).$) 
 
Now each $F\in\mathcal{F}$ can be covered by the sets $F_1, F_2,$ and $F_3$ as follows. 
\begin{itemize} 
\item Let $F_1$ be the set of edges in $F$ with at least one endpoint in $X,$ 
\item let $F_2$ be the set of edges in $F$ with both endpoints in $V(H),$ and 
\item let $F_3$ be the set of edges in $F$ with at most one endpoint in $Y.$ 
\end{itemize} 
It follows that, for every $F\in\mathcal{F},$ $F_1\in\Aligned{G,X},$ $F_2\in\Aligned{H,Y},$ and $F_3= F\setminus F_2.$ 
 
For each $R\in\Aligned{G,X}$ let $\mathcal{F}_R\subseteq \mathcal{F}$ be the collection of matchings $F\in\mathcal{F}$ such that $R=F_1.$ 
Notice that $\CondSet{\mathcal{F}_R}{R\in\Aligned{G,X}}$ is a partition of $\mathcal{F}.$
Moreover, let $F'\in \Aligned{G,X}.$
Then $G-(X\setminus V(F'))$ has a perfect matching, say $M,$ with $F'\subseteq M$ and we may set $F''$ to be the set of edges from $M$ with at least one endpoint in $Z.$
It follows that $F'\subseteq F''\in \Aligned{G,Z}=\mathcal{F}.$
Thus, for each $R\in\Aligned{G,X}$ there is at least one $F\in\mathcal{F}$ such that $R=F_1.$

By definition, for each $R\in\Aligned{G,X},$ $\GenerateMatchingsBoundaryF{(G,X,\mathbf{p})}{R}$ is the generating function for all perfect matchings of $G-(X\setminus V(R))$ that contain $R.$
Let $M$ be such a perfect matching of $G-(X\setminus V(R)),$ then $M$ can be partitioned in to four sets as follows
\begin{itemize}
    \item $M_1\coloneqq  R,$
    \item $M_2,$ which is the set of all edges in $M\setminus R$ with at most one endpoint in $H,$
    \item $M_3,$ which is the set of all edges of $M\setminus R$ with both endpoints in $H$ and at least one endpoint in $Y,$ and
    \item $M_4\coloneqq  M\setminus (M_1\cup M_2\cup M_3).$
\end{itemize}
We may further split $R$ into the sets $R_1\coloneqq  R\cap E(H)$ and $R_2\coloneqq  R\setminus E(H).$
Observe the following:
\begin{enumerate}
    \item By the definition of boundary subgraphs and the choice of $Z$ we have that every edge of $M$ with exactly one endpoint in $H$ has an endpoint in $Y.$
    \item $M_1\cup M_2\cup M_3$ is aligned with $(G,Z).$
    \item $R_1\cap M_3$ is aligned with $(H,Y)$ and the vertices in $Y\setminus V(R_1\cap M_3)$ are exactly those vertices of $H$ which either belong to $X\setminus V(R)$ or are covered by $M_1\cup M_2.$
\end{enumerate}
Now consider some $F\in\mathcal{F}_R$ and recall the bipartition of $F$ into $F_2$ and $F_3$ as above.
As $R\subseteq F$ it follows that $R_1=F_2\cap R$ and $R_2=F_3\cap R$ form a bipartition of the set $R$ itself.
By definition we have that
\begin{align*}
    \GenerateMatchingsBoundaryF{(G,Z,\mathbf{p})}{F}&=(\prod_{e\in F}\mathbf{p}(e))\cdot\frac{\GenerateMatchingsBoundaryF{(H,Y,\mathbf{p})}{F_2}}{\prod_{e'\in F_2}\mathbf{p}(e')}\\
    &= (\prod_{e\in F_3}\mathbf{p}(e))\cdot \GenerateMatchingsBoundaryF{(H,Y,\mathbf{p})}{F_2}.
\end{align*}
It follows that 
\begin{align*} 
\GenerateMatchingsBoundaryF{(G,X,\mathbf{p})}{R}&=(\prod_{e\in R}\mathbf{p}(e))\cdot\sum_{F\in\mathcal{F}_R}\frac{\GenerateMatchingsBoundaryF{(G,Z,\mathbf{p})}{F}}{\prod_{e\in R}\mathbf{p}(e)}\\
&=(\prod_{e\in R}\mathbf{p}(e))\cdot\sum_{F\in\mathcal{F}_R}\frac{(\prod_{e\in F_3}\mathbf{p}(e))\cdot \GenerateMatchingsBoundaryF{(H,Y,\mathbf{p})}{F_2}}{\prod_{e\in R}\mathbf{p}(e)}
\\
\phantom{\GenerateMatchingsBoundaryF{(G,Z,\mathbf{p})}{F}}&=(\prod_{e\in R}\mathbf{p}(e))\cdot\sum_{F\in\mathcal{F}_R}(\prod_{e\in F_3\setminus R}\mathbf{p}(e))\cdot\frac{\GenerateMatchingsBoundaryF{(H,Y,\mathbf{p})}{F_2}}{\prod_{e'\in F_2\cap R}\mathbf{p}(e')}\\
&=\sum_{F\in\mathcal{F}_R}(\prod_{e\in F_3}\mathbf{p}(e))\cdot\GenerateMatchingsBoundaryF{(H,Y,\mathbf{p})}{F_2}. 
\end{align*} 
Given the sets $\mathcal{F},$ $\Aligned{G,X},$ and $\GenerateMatchingsBoundary{(H,Y)}$ it follows that $\GenerateMatchingsBoundaryF{(G,X)}{R}$ can be found in time $\Abs{G}^{\mathcal{O}(k)}$ and, since $|\Aligned{G,X}|=\mathcal{O}(\Abs{G}^k),$ our claim follows.
\end{proof}

\paragraph{Generating Functions, Permanents, and Pfaffian Orientations.}

The generating function of planar graphs and, in general, of graphs of bounded Euler-genus, is usually computed using Pfaffian orientations (in the case of planar graphs) or a linear combination of many different orientations derived from Pfaffian orientations (in the case of general graphs of bounded Euler-genus).
Before we go on we introduce this notion and explain how the generating function of an edge-weighted graph of bounded Euler-genus can be found using this concept.
We then elaborate on this a bit more and explain how we can, essentially, replace the monomial of an edge by a more complicated polynomial in order to encode entire generating functions linked to this edge.
This last step is of particular importance as it provides us with a way to handle the (possibly) unboundedly many different sets of size at most three onto which subtrees below a bag of unbounded size might attach.
 
An \emph{orientation} of a graph $G$ is a digraph $\vec{G}$ with vertex set $V(G)$ whose edge set is obtained by introducing for every edge $uv\in E(G)$ exactly one of the edges $(u,v)$ or $(v,u).$ 
Let $C$ be an even cycle of $G$ and let $\vec{G}$ be an orientation of $G.$ 
$C$ is said to be \emph{oddly oriented} by $\vec{G}$ if it has an odd number of directed edges in agreement with the clockwise traversal of $C.$ 
Notice that, since $C$ is even, if it is oddly oriented it must also have an odd number of directed edges in agreement with the counterclockwise traversal, so the property of being oddly oriented does not depend on the direction of traversal after all. 
Similarly, we say that a cycle $C$ of $G$ is \emph{evenly oriented} by $\vec{G}$ if it is not oddly oriented. 
 
Let $G$ be a graph with a perfect matching and $C$ be an even cycle. 
We call $C$ a \emph{conformal cycle} if $G-V(C)$ has a perfect matching. 
Notice here that, since $C$ is even, it has two disjoint perfect matchings and thus every perfect matching of $G-V(C)$ can be completed to a perfect matching of $G$ by choosing one of these two matchings. 
If $M$ is a perfect matching of $G$ and $C$ is a conformal cycle in $G$ such that $M$ contains a perfect matching of $C$ we say that $C$ is \emph{$M$-conformal}.

\begin{definition}[Sign Polynomials of Matchings and Orientations] \label{@degradation} 
Let $(G,\mathbf{p})$ be a labelled graph with a perfect matching $M$ and let $\vec{G}$ be an orientation of $G.$ 
For every perfect matching $N$ of $G$ let $\MatchingSign{\vec{G},M,N}\coloneqq  (-1)^n,$ where $n$ is the number of $M$-conformal cycles of $G$ which are also $N$-conformal and evenly oriented by $\vec{G}.$ 
We define the \emph{$M$-polynomial} of $\vec{G}$ as follows 
\begin{align*} 
\SignPolynomial{\vec{G},M,\mathbf{p}}\coloneqq  \sum_{N\in\Perf{G}}\MatchingSign{\vec{G},M,N}\cdot\MatchingMonomial{\mathbf{p}}{N}. 
\end{align*}
\end{definition} 

\begin{definition}[Pfaffians of Skew-Symmetric Matrices]\label{@schoolboys}
Let $(G,\mathbf{p})$ be a labelled graph and $\vec{G}$ be an orientation of $G$ such that $\Abs{G}=2n$ for some $n\in\Bbb{N}.$
We denote by $\SkewMatrix{\vec{G},\mathbf{p}}$ the skew-symmetric matrix with rows and columns indexed by $V(G),$ where $a_{uv}=\mathbf{p}(uv)$ in case $(u,v)\in E(\vec{G}),$ $a_{uv}=-\mathbf{p}(uv)$ if $(v,u)\in E(\vec{G}),$ and $a_{uv}=0$ otherwise.
The \emph{Pfaffian} of $\SkewMatrix{\vec{G},\mathbf{p}}$ is defined as
\begin{align*} 
\Pfaffian{\SkewMatrix{\vec{G},\mathbf{p}}}\coloneqq  \sum_{\pi}\mathsf{s}(\pi)\cdot a_{i_1,j_1}\cdots a_{i_n,j_n}
\end{align*}
where $\pi=\Set{\Set{i_1,j_1},\dots,\Set{i_n,j_n}}$ is a partition of the set $[2n]$ into pairs $i_k <j_k$ for every $k\in[n],$ and $\mathsf{s}(\pi)$ equals the sign of the permutation $i_1j_1\dots i_nj_n$ of $12\dots(2n).$  

{Each non-zero term of the expression of the Pfaffian of $\SkewMatrix{\vec{G},\mathbf{p}}$ equals $\MatchingMonomial{\mathbf{p}}{M}$ or $-\MatchingMonomial{\mathbf{p}}{M}$ where $M$ is a perfect matching of $G.$}
We denote by $\mathsf{s}(\vec{G},M)$ the sign of the term $\MatchingMonomial{\mathbf{p}}{M}$ in said expression of the Pfaffian of $\SkewMatrix{\vec{G},\mathbf{p}}.$
Notice that $\mathsf{s}(\vec{G},M)$ is independent of $\mathbf{p}$ as it only depends on $M,$ the encoding the permutation, and $\vec{G}$ determining whether $\mathbf{p}(uv)$ appears with its original sign or the opposing one.
It has been observed (see e.g., \cite{GalluccioL99onthe}) that 
\begin{align*} 
\Pfaffian{\SkewMatrix{\vec{G},\mathbf{p}}}=\sum_{M\in\Perf{G}}\mathsf{s}(\vec{G},M)\cdot\MatchingMonomial{\mathbf{p}}{M}.
\end{align*}
\end{definition}

The following theorem was proven by Kasteleyn \cite{Kasteleyn61thest} for the case where $\mathbf{p}(e)=x^{w(e)},$ $w(e)\in\Bbb{Z}.$
It still holds for arbitrary labelings $\mathbf{p}$ and we state it here in this more general form.

\begin{proposition}[\!\!\cite{Kasteleyn61thest}]\label{@derogatory} 
Let $(G,\mathbf{p})$ be a labelled graph with two perfect matchings $M,$ $N,$ and an orientation $\vec{G}.$ 
Then 
$
\mathsf{s}(\vec{G},N)=\mathsf{s}(\vec{G},M)\cdot \MatchingSign{\vec{G},M,N}. 
$
Hence, {for every perfect matching $N,$} 
\begin{align*} 
\Pfaffian{\SkewMatrix{\vec{G},\mathbf{p}}}&=\sum_{N\in\Perf{G}}\mathsf{s}(\vec{G},N)\cdot\MatchingMonomial{\mathbf{p}}{N}\\&=\mathsf{s}(\vec{G},M)\sum_{N\in\Perf{G}}\MatchingSign{\vec{G},M,N}\cdot\MatchingMonomial{\mathbf{p}}{N}=\mathsf{s}(\vec{G},M)\cdot \SignPolynomial{\vec{G},M,\mathbf{p}}. 
\end{align*}
\end{proposition} 

While computing the permanent, and thus computing the generating function for perfect matchings is generally a $\#\mathsf{P}$-hard problem \cite{Valiant79theco}, the Pfaffian of a skew-symmetric matrix can be expressed through its determinant and is thereby efficiently computable.

\begin{proposition}[\!\!\cite{cayley1847,Kasteleyn67graph}] 
Let $(G,\mathbf{p})$ be a labelled graph and an orientation $\vec{G},$ then 
\begin{align*} 
\Pfaffian{\SkewMatrix{\vec{G},\mathbf{p}}}^2=\mathsf{det}(\SkewMatrix{\vec{G},\mathbf{p}}). 
\end{align*}
\end{proposition}

In case a graph $G$ has an orientation $\vec{G}$ such that the signs produced by $\mathsf{s}(\vec{G},\cdot)$ and $\MatchingSign{\vec{G},\cdot,\cdot}$ would be independent of the matchings, this technique could be used to find the generating functions for the perfect matchings of $G$ efficiently.
This fact lead Kasteleyn to the definition of the so called ``Pfaffian orientations'', which are orientations that ensure $\MatchingSign{\vec{G},M,N}=1$ for all choices of $M$ and $N.$

\begin{definition}[Pfaffian Orientation]\label{@recognized}
Let $G$ be a graph with a perfect matching $M.$
An orientation $\vec{G}$ is called a \emph{Pfaffian orientation} if every $M$-conformal cycle of $G$ is oddly oriented by $\vec{G}.$
If $G$ has a Pfaffian orientation, $G$ is called \emph{Pfaffian}.
\end{definition}

As a first application of this idea, Kasteleyn showed that every planar graph is Pfaffian \cite{Kasteleyn61thest}.
He generalized this idea and stated that the generating function of a graph $G$ of orientable genus $g$ could be expressed as a linear combination of $4^g$ Pfaffians of different orientations of $G.$
This was later turned into a theorem by Galluccio and Loebl \cite{GalluccioL99onthe} and adapted as a time  
$\mathcal{O}_{k}(|G|^{\mathcal{O}(1)})$ 
algorithm for edge-weighted graphs whose weights are bounded in size by some polynomial in the size of the graph by Galluccio, Loebl, and Vondr\'ak \cite{galluccio2001optimization}.
An extension of this idea to graphs of bounded Euler-genus was found by \cite{Tesler00match}.
He uses $2^g$ many orientations, where $g$ now is the Euler-genus of $G.$
Both approaches were unified into a framework that uses planarizing gadgets instead of orientations by Curticapean and Xia in \cite{CurticapeanX15param}.

These algorithms are the centerpiece of our own algorithm as they allow us to find the generating functions for the torsos of unbounded size in our decomposition.
We slightly adapt the formulation of the results of Galluccio, Loebl, Vondr\'ak, and Tesler to match our more general setting of labelled graphs.
Since the edge weights only come into play in the computation of the $2^g$ Pfaffians and their linear combination, the more general version of their result still holds.

Let $(G,\mathbf{p})$ be a labelled graph.
We say that $\mathbf{p}$ is \emph{polynomially bounded} if there exists a polynomial $\mathsf{p}$ such that the degrees of the polynomials in
\begin{align*} 
\CondSet{p,q\in\Bbb{Z}[x]}{\text{there exists }e\in E(G)\text{ s.\@t.\@ }\mathbf{p}(e)=\frac{p}{q}\text{ is fully reduced}}
\end{align*}
are bounded by $\mathsf{p}(\Abs{G}).$

\begin{proposition}[\!\!\cite{galluccio2001optimization,Tesler00match}]\label{@wittgensteiris}
Let $g\in\Bbb{N}$ be an integer and $(G,\mathbf{b})$ be a labelled graph whose labeling is polynomially bounded.
There exists an algorithm that computes the labelled generating function of all perfect matchings of $(G,\mathbf{p})$ in time $\mathcal{O}_{k}(|G|^{\mathcal{O}(1)}),$ where $k=\genus{G}.$
\end{proposition}

There are two important observations to take away from this quick introduction to Pfaffians and, in particular, from \autoref{@wittgensteiris}.
The first is, that the generating function of (labelled) perfect matchings can be expressed as a linear combination of Pfaffians of the skew-symmetric matrices corresponding to some orientations of a bounded-genus graph. 
The second is that the Pfaffian can be expressed as a function of the determinant of a skew-symmetric matrix, which  is polynomially computable.

\paragraph{A polynomial algorithm for apex-bounded-genus graphs.}

The next subroutine for our algorithm will be a way to produce the generating function for the labelled perfect matchings of graphs that have Euler-genus at most $t$ after the deletion of at most $t$ vertices.
The lemma we prove here can be seen as a slight generalization of \autoref{@celebration} which now also incorporates \autoref{@wittgensteiris}.

\begin{lemma}\label{@transformations} 
Let $k$ be a positive integer. 
Let $(G,X,\mathbf{p})$ be a labelled boundary graph with $\Abs{X}\leq k$ and assume there exists a set $A\subseteq V(G)$ with $\Abs{A}\leq k$ such that the Euler-genus of $G-A$ is at most $k.$ 
There exists an algorithm that computes in time $\Abs{G}^{\mathcal{O}(k)}$ the set $\GenerateMatchingsBoundary{(G,X,\mathbf{p})}.$
\end{lemma}

\begin{proof} 
Notice that $(G,A\cup X,\mathbf{p})$ is a boundary graph with a boundary of size at most $2k.$ 
Let $\mathcal{F}$ be the collection of all matchings $F$ such that $F\in\Aligned{G,A\cup X}$ and every vertex of $A\setminus X$ is covered by an edge of $F.$ 
As discussed in the proof of \autoref{@celebration} we have $\Abs{\mathcal{F}}\in\Abs{G}^{\mathcal{O}(k)}$ and $\mathcal{F},$ as well as the set $\Aligned{G,X},$ can be found in time $\Abs{G}^{\mathcal{O}(k)}.$ 
Each set $F\in\mathcal{F}$ can be partitioned into two sets as follows: 
\begin{itemize} 
\item Let $F_1$ be the set of edges in $F$ with at least one endpoint in $X,$ and 
\item let $F_2\coloneqq  F\setminus F_1.$ 
\end{itemize} 
Observe that for each $F\in\mathcal{F}$ we have $F_1\in\Aligned{G,X},$ and $G-(X\cup V(F))\subseteq G-A$ has Euler-genus at most $k.$ 
For every $R\in\Aligned{G,X}$ let $\mathcal{F}_R\subseteq\mathcal{F}$ be the collection of all sets $F\in \mathcal{F}$ with $F_1=R.$
It follows that 
\begin{align*} 
\GenerateMatchingsBoundaryF{(G,X,\mathbf{p})}{R}&= (\prod_{e\in R}\mathbf{p}(e))\cdot \GenerateMatchings{G-V(R)-X,\mathbf{p}}\\
&= (\prod_{e\in R}\mathbf{p}(e))\cdot \sum_{F\in\mathcal{F}_R}(\prod_{e\in F\setminus R}\mathbf{p}(e))\cdot\GenerateMatchings{G-V(F)-X,\mathbf{p}}\\
&=\sum_{F\in\mathcal{F}_R} 
\big(\prod_{e\in F}\mathbf{p}(e)\big)\cdot\GenerateMatchings{G-V(F)- X,\mathbf{p}}. 
\end{align*} 
Since $G-A$ has Euler-genus at most $k$ and there are at most $\Abs{G}^{\mathcal{O}(k)}$ many sets in $\mathcal{F}_R,$ by calling the algorithm from \autoref{@wittgensteiris} $\Abs{G}^{\mathcal{O}(k)}$ many times we can compute $\GenerateMatchings{G-(V(F)\cup X),\mathbf{p}}$ for every $R\in\Aligned{G,X}$ and every $F\in\mathcal{F}_R$ in time $\Abs{G}^{\mathcal{O}(k)}$ and thus our claim follows. 
\end{proof}

\paragraph{Bags of unbounded size.}

We are now ready to generalize \autoref{@transformations} to the setting where the boundary graph $(H,X)$ arises as the torso of some bag in the structural decomposition provided by \autoref{@diskussion}.
For a detailed introduction into the theory of graph minors we refer the reader back to \autoref{@constraint}.
The definition of a $\Sigma$-decomposition is given in \autoref{@custodians} and \autoref{@diskussion} is the relevant structure theorem for this part. 

As a quick reminder, \autoref{@diskussion} provides, for every graph $G$ which excludes the shallow vortex grid as a minor, a tree decomposition where the torso of the bag at any vertex $t$ is a graph of bounded Euler-genus after the deletion of a small set of vertices called the \textit{apex set}.
Moreover, each node of this tree decomposition is equipped with such an apex set such that the adhesion set of any edge $dt,$ that is the intersection of the bags of $t$ and $d,$ in the decomposition tree avoids the apex set of $t$ in at most three vertices.
Additionally, the torso of $G$ at $t,$ after deleting the corresponding apex set, is embedded in a surface of bounded Euler-genus in such a way that the, at most three, vertices in the adhesion of $dt$ sit on a common face.

More formally, this means that we have to adress the following issue.
Let $G$ be a graph excluding some shallow vortex minor and let $(T,\beta )$ be the decomposition of $G$ provided by \autoref{@diskussion} where $r\in V(T)$ is the root of $T.$
Now let $t\in V(T)$ be some vertex with $\Abs{\beta (t)}>4\alpha (t)$ and let $d_1,\dots,d_{\ell}$ be the children of $t.$
For each $i\in[\ell]$ the intersection $\beta {(t)}\cap\beta {(d_i)}$ may contain up to three vertices which do not belong to the apex set of the torso $G_t$ of $G$ at $t.$
Moreover, the number $\ell$ is unbounded.
Hence, if we were to generalize our approach from \autoref{@transformations} directly we would need to produce a set $\mathcal{F}$ of matchings not only covering the boundary $\beta {(t)}\cap\beta {(t')}$ to the ancestor of $t,$ but also covering $\beta {(t)}\cap\beta {(d_i)}$ for all $i \in[\ell].$
As a result, we would be unable to bound the size of $\mathcal{F}.$

To get a better grip on the situation let us introduce some more definitions.

\begin{definition}[Branching of a Graph]
\label{@ambivalent} 
Let $k\in\Bbb{N}$ be some integer. 
Let $(G,X,\mathbf{p})$ be a labelled boundary graph with an \emph{apex set} $A\subseteq V(G).$ 
Let $(B_1,Y_1,\mathbf{p}),\dots,(B_{\ell},Y_{\ell},\mathbf{p})$ be boundary subgraphs of $(G,X,\mathbf{p}).$ 
We call $\mathcal{B}=((G,X),A,\mathbf{p},(B_1,Y_1),\dots,(B_{\ell},Y_{\ell}))$ a \emph{$k$-branching} (\emph{of $(G,X,\mathbf{p})$}) if 
\begin{itemize} 
\item For every $i\in[\ell],$ $(V(B_i)\setminus Y_i)\cap A=\emptyset.$ 
\item $\Abs{X\setminus A}\leq 3$ and, for all $i\in[\ell],$ $\Abs{Y_i\setminus A}\leq 3,$ 
\item $\Abs{X},\Abs{A}\leq k$ and, for all $i\in[\ell],$ $\Abs{Y_i}\leq k,$ 
\item if $i\neq j\in[\ell]$ then $V(B_i)\cap V(B_j)\subseteq Y_i\cap Y_j,$ 
\item if $i\neq j\in[\ell]$ then neither $Y_i\setminus A\subseteq Y_j\setminus A$ nor $Y_j\setminus A\subseteq Y_i\setminus A$ holds, 
\item if $G_{\mathcal{B}}$ is the subgraph obtained from $G-(\bigcup_{i\in[\ell]}(V(B_i)\setminus Y_i))$ by turning every set $Y_i,$ $i\in[\ell],$ and the set $X$ into cliques, then $G_{\mathcal{B}}-A$  
has a drawing $\gamma,$ without crossings, on some surface $\Sigma,$ without boundary, of Euler-genus at most $k,$  
{ \item the vertices of $X$ are incident to a face of $\gamma$ and, for every $i\in[\ell],$ the vertices of $Y_i\setminus A$ are incident to a face of $\gamma.$
 
}
\end{itemize} 
See \autoref{@licentious} for a visualisation of \autoref{@ambivalent}.

\begin{figure}[th]
\begin{center} 
~~\scalebox{1}{\includegraphics{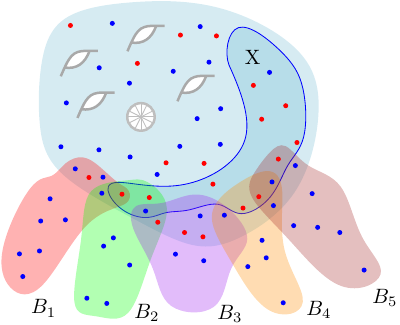}} 
\end{center} 
\vspace{-0mm} 
\caption{A visualization of \autoref{@ambivalent}. The \red{red} vertices depict the vertices in $A.$ 
For each $B_{i},$ the boundary $Y_{i}$ consists of the vertices in the intersection of the  \textcolor{celestialblue}{light blue}  
territory and the territory corresponding in $B_{i}.$} 
\label{@licentious}
\end{figure}

The $(B_i,Y_i,\mathbf{p})$ are called the \emph{branches} of $\mathcal{B}.$ 
Let $(G,X,\mathbf{p})$ be a labelled boundary graph and let $\mathcal{B}=((G,X),\mathbf{p},A,(B_1,Y_1),\dots,(B_{\ell},Y_{\ell}))$ be a $k$-branching of $(G,X,\mathbf{p}).$ 
We call a function $\SignPost\colon [\ell]\rightarrow 2^{\Bbb{Z}[x]}$ the \emph{sign post} of $\mathcal{B}$ if for every $i\in[\ell]$ we have $\SignPost(i)=\GenerateMatchingsBoundary{(B_i,Y_i,\mathbf{p})}.$
\end{definition} 
 
Our goal is to compute the family of all generating functions $\GenerateMatchingsBoundary{(G,X,\mathbf{p})}$ for the labelled boundary graph $(G,X,\mathbf{p})$ in the case where we are given a branching $\mathcal{B}$ together with its sign post $\SignPost.$

\paragraph{Matchgates.}
To manage the problem of the unbounded number of non-trivial residual boundaries in a branching, we employ the technique from \cite{StraubTW14count} which is similar to Valiant's ``matchgates'' \cite{valiant2008holographic}.
 matchgates were applied for the problem of counting perfect matchings in single crossing minor-free graphs in \cite{Curticapean14count}.
However, for our purpose of producing the entire generating function the gadgets from \cite{StraubTW14count} appear to be more suited.

If $(G,X,\mathbf{p})$ is some labelled boundary graph and $\mathcal{B}=((G,X),A,\mathbf{p},(B_1,Y_1),\dots,(B_{\ell},Y_{\ell}))$ is a $k$-branching of $(G,X,\mathbf{p}),$ then what remains of any $B_i$ in $G_{\mathcal{B}}-A$ is at most a triangle and this triangle bounds a face of $\gamma.$
Let $F$ be matching in $G$ such that 
\begin{itemize}
\item every edge in $F$ covers some vertex in $A\cup X,$ 
\item every vertex of $A\setminus X$ is covered by an edge of $F,$ and
\item there exists a perfect matching $M$ of $G-V(F)-X.$
\end{itemize}
Now define the {\emph{$F$-reduced branching}} 
$\mathcal{B}_F\coloneqq  ((G_F,\emptyset),\emptyset,\mathbf{p},(B_{1,F},Y_{1,F}),\dots,(B_{\ell,F},Y_{\ell,F})),$
where
\begin{align*} 
G_F\coloneqq & G-V(F)-X\text{,}\\ 
B_{i,F}\coloneqq & B_i-V(F)-X\text{ for all }i\in[\ell]\text{, and}\\ 
Y_{i,F}\coloneqq & Y_i\setminus(V(F)\cup X)\text{ for all }i\in[\ell]. 
\end{align*}
Note that every $(B_{i,F},Y_{i,F},\mathbf{p})$ {still} is a labelled boundary subgraph of $(G-V(F)-X,\emptyset,\mathbf{p}).$
Suppose we are given, for every $i\in[\ell],$ the set $\GenerateMatchingsBoundary{(B_{i,F},Y_{i,F},\mathbf{p})}.$
Let us consider some $i\in[\ell].$
There are eight possible cases that might arise.
In each of these cases, we replace the entire graph $B_{i,F}$ by a certain graph $J_{i,F}$ together with a labelling $λ_{i,F}$ of the edges of $J_{i,F}$ with elements from the field of quotients over $\Bbb{Z}[x]$, i.\@e.\@  $\Bbb{Z}(x).$
{These polynomial fractions will then be used to encode the generating functions of $(B_{i,F},Y_{i,F},\mathbf{p})$ for a skew-symmetric matrix.
The Pfaffian of said matrix will be used (via \autoref{@wittgensteiris}) to compute the generating function for the perfect matchings in $(G_F,\emptyset,\mathbf{p}).$}
Formally, we will replace the subgraph $B_{i,F}$ by the graph $J_{i,F},$ thereby producing some labelled boundary graph $(G',\emptyset,\mathbf{p}'),$ and adjust the function $\mathbf{p}'$ by setting $\mathbf{p}'(e)\coloneqq  λ_{i,F}(e)$ for all $e\in E(J_{i,F}),$ while $\mathbf{p}'$ equals $\mathbf{p}$ on all other edges of $G'.$

We now describe the \emph{matchgates} that we will be using.
In the non-trivial cases, we will attribute the respective  matchgate with a property called ``representativeness''.
In this property we subsume the different ways in which perfect matchings of $G'$ can interact with the  matchgate.
The role of the representativeness is to ensure that $(J_{i,F},λ_{i,F})$ correctly encodes the generating function of the boundary graph it represents.

\begin{description} 
\item[\textbf{Case 1}: $Y_{i,F}=\emptyset.$] In this case $J_{i,F}$ is just the empty graph without any vertices and $λ_{i,F}$ is empty as well. 
 
\item[\textbf{Case 2}: $Y_{i,F}=\Set{a}.$] In this case $J_{i,F}$ consists exactly of the vertex $a$ and $λ_{i,F}$ is empty. 
 
\item[\textbf{Case 3:} $Y_{i,F}=\Set{a,b}$ and $\Abs{V(B_{i,F})}$ is even.] Here we define $J_{i,F}$ to be the graph with vertex set $\Set{a,b,u,v}$ together with the edges $\Set{{au,uv,vb}},$ where $u$ and $v$ are newly introduced vertices. 
Note that, since $\Abs{V(B_{i,F})}$ is even, every perfect matching of $G_F$ either covers both $a$ and $b$ with edges of $B_{i,F},$ or none of them. 
Moreover, after replacing $B_{i,F}$ with $J_{i,F},$ every perfect matching of the resulting graph $G_F'$ either contains the edges $au$ and $bv,$ or the edge $uv,$ let us call this fact the \emph{representativeness} of $J_{i,F}.$ 
The labels $λ_{i,F}$ should now express two possible states; the contribution of the edges $au$ and $bv$ to the Pfaffian of the skew-symmetric matrix we want to construct should equal the labelled generating function of all perfect matchings of $B_{i,F},$ let us call this function $\mathsf{p}_{\emptyset}$ since no vertex from $\Set{a,b}$ is matched outside of $B_{i,F},$ while the contribution of the edge $uv$ should equal the labelled generating function of all perfect matchings of $B_{i,F}-a-b,$ we denote this function by $\mathsf{p}_{ab}.$ 
Finally, we set $λ_{i,F}(au)\coloneqq  \mathsf{p}_{\emptyset},$ $λ_{i,F}(bv)\coloneqq  1,$ and $λ_{i,F}\coloneqq  \mathsf{p}_{ab}.$ 
 
\item[\textbf{Case 4:} $Y_{i,F}=\Set{a,b}$ and $\Abs{V(B_{i,F})}$ is odd.] This case is similar to Case 3 with the difference that every perfect matching of $G_F$ must match exactly one of the two vertices $a$ and $b$ within $B_{i,F}$ while the other one cannot be matched within $B_{i,F}$ at the same time. 
To model this with our  matchgate $J_{i,F}$ we define its vertex set to be $\Set{a,b,u},$ where $u$ is a newly introduced vertex. 
The edge set is defined to be $E(J_{i,F})\coloneqq \Set{au,bu}.$ 
For each $x\in\Set{a,b}$ let $\mathsf{p}_x$ be the labelled generating function for all perfect matchings of $B_{i,F}-x$ and let $H_F'$ be the graph obtained from $H_F$ by replacing $B_{i,F}$ with $J_{i,F}.$ 
In this case the \emph{representativeness} of $J_{i,F}$ is the fact that every perfect matching of $H_F'$ must contain exactly one of the edges of $J_{i,F}.$ 
 
\item[\textbf{Case 5:} $Y_{i,F}=\Set{a,b,c},$ $\Abs{V(B_{i,F})}$ is even, and $B_{i,F}$ has a perfect matching.] For each $S\subseteq \Set{a,b,c},$ let $\mathsf{p}_S$ be the labelled generating function of all perfect matchings of $B_{i,F}-S.$  
Since $\Abs{V(B_{i,F})}$ is even, any matching $F'$ with $F'\in\Aligned{B_{i,F},Y_{i,F}}$ must be of odd size and thus $\mathsf{p}_S\neq 0$ is possible if and only if $\Abs{S}$ is even. 
In particular, we have that $\mathsf{p}_{\emptyset}\neq 0$ since $B_{i,F}$ has a perfect matching. 
We introduce three new vertices $u,$ $v,$ and $w$ and define $J_{i,F}$ and $λ_{i,F}$ as depicted in \textbf{Case 5} of \autoref{@transience}. 
Now let $G_F'$ be the graph obtained from $G_F$ by replacing $B_{i,F}$ with $J_{i,F}.$  
In this case the \emph{representativeness} of $J_{i,F}$ is slightly more complicated than in previous cases. 
The new vertices $u,$ $v,$ and $w$ must be covered by every perfect matching of $H_F'.$ 
Hence, every perfect matching $M$ of $G_F'$ contains exactly two edges of $J_{i,F}$ or three. 
This means there always exists a set $S\in\Set{\emptyset,\Set{a,b},\Set{a,c},\Set{b,c}}$ such that $M$ contains a perfect matching of $J_{i,F}-S.$ 
Moreover, in case $S=\emptyset,$ $M$ must contain the edges $au,$ $bv,$ and $cw$ whose labels under $λ_{i,F},$ in total, multiply to $\mathsf{p}_{\emptyset}.$ 
If $S=\Set{a,b}$ then $M$ contains $uv$ and $cw,$ evaluating to $\mathsf{p_{ab}},$ similarly $S=\Set{a,c}$ forces the labels of the edges of $M$ within $J_{i,F}$ to multiply to $\mathsf{p}_{ac}.$ 
Finally, $M$ contains the edge $vw$ if and only if $M$ contains the edge $au$ and thus, since $\mathsf{p}_{\emptyset}\neq 0,$ we obtain $λ_{i,F}(au)\cdot λ_{i,F}(vw)=\mathsf{p}_{bc}.$ 
 
\item[\textbf{Case 6:} $Y_{i,F}=\Set{a,b,c},$ $\Abs{V(B_{i,F})}$ is even, and $B_{i,F}$ has no perfect matching.] 
As for the above case, or each $S\subseteq \Set{a,b,c}$ let $\mathsf{p}_S$ be the labelled generating function of all perfect matchings of $B_{i,F}-S.$  
Since $\Abs{V(B_{i,F})}$ is even, any matching $F'$ with $F'\in\Aligned{B_{i,F},Y_{i,F}}$ must be of odd size and thus $\mathsf{p}_S\neq 0$ is possible if and only if $\Abs{S}$ is even. 
Moreover, since $B_{i,F}$ has no perfect matching we know $\mathsf{p}_{\emptyset}=0.$ 
We introduce a single new vertex $w$ and define $J_{i,F}$ and $λ_{i,F}$ as depicted in \textbf{Case 6} of \autoref{@transience}. 
Now let $G_F'$ be the graph obtained from $G_F$ by replacing $B_{i,F}$ with $J_{i,F}.$ 
The \emph{representativeness} of this case is the fact that the new vertex $w$ must be covered by every perfect matching of $G_F'.$ 
Hence, every such perfect matching must contain exactly one of the edges of $J_{i,F}.$ 
Since for every $x\in\Set{a,b,c}$ we have $λ_{i,F}(wx)=\mathsf{p}_{\Set{a,b,c}\setminus \Set{x}},$ the perfect matchings of $B_{i,F}-(\Set{a,b,c}\setminus\Set{x})$ are correctly represented by $(J_{i,F},λ_{i,F}).$ 
 
\item[\textbf{Case 7:} $Y_{i,F}=\Set{a,b,c},$ $\Abs{V(B_{i,F})}$ is odd, and $B_{i,F}-a$ has a perfect matching.] 
In this case for every $S\subseteq \Set{a,b,c},$ let $\mathsf{p}_S$ be the labelled generating function of all perfect matchings of $B_{i,F}-S.$  
Since $\Abs{V(B_{i,F})}$ is odd, any matching $F'$ with $F'\in\Aligned{B_{i,F},Y_{i,F}}$ must be of even size and thus $\mathsf{p}_S\neq 0$ is possible if and only if $\Abs{S}$ is odd. 
In particular, we have $\mathsf{p}_a\neq 0$ since $B_{i,F}-a$ has a perfect matching. 
We introduce two new vertices, $v$ and $w,$ and define $J_{i,F}$ and $λ_{i,F}$ as depicted in \textbf{Case 7} of \autoref{@transience}. 
Next let $G_F'$ be the graph obtained from $G_F$ by replacing $B_{i,F}$ with $J_{i,F}.$ 
Observe that every perfect matching of $G_F'$ must cover the vertices $v$ and $w,$ for this the two cases are possible; 
Let $M$ be a perfect matching of $G_F',$ then either $vw\in M,$ or there exist $x,y\in\Set{a,b,c}$ such that $vx,wy\in M.$ 
In the first case all three vertices of $\Set{a,b,c}$ are matched by $M$ with vertices outside of $J_{i,F},$ while in the second case only the single remaining vertex of $\Set{a,b,c}\setminus\Set{x,y}$ is matched to a vertex outside of $J_{i,F}.$ 
If $M$ contains the edge $aw,$ then $M$ must match $v$ within $J_{i,F}$ and the only way to do so is via the edge $bv.$ 
Hence, we obtain $λ_{i,F}(aw)\cdot λ_{i,F}(bv)=\mathsf{p}_c,$ correctly representing the case where exactly $c$ is matched by $M$ with a vertex not in $J_{i,F}.$ 
In case $av\in M$ we must have $cw\in M$ and $λ_{i,F}(av)\cdot λ_{i,F}(cw)=\mathsf{p}_b,$ and if $bv,cw\in M$ we have $λ_{i,F}(bv)\cdot λ_{i,F}(cw)=\mathsf{p}_a.$ 
This leaves only the case where $vw\in M$ which means all three vertices $a,b,$ and $c$ must be matched outside of $J_{i,F}$ and thus the contribution of $M\cap E(J_{i,F})$ in this case is exactly $λ_{i,F}(vw)=\mathsf{p}_{abc}.$ 
We refer to these observations as the \emph{representativeness} of $(J_{i,F},λ_{i,F})$ in this case. 
 
\item[\textbf{Case 8:} $Y_{i,F}=\Set{a,b,c},$ $\Abs{V(B_{i,F})}$ is odd, and $B_{i,F}-a$ has no perfect matching.] 
As in the previous cases, for each $S\subseteq \Set{a,b,c}$ let $\mathsf{p}_S$ be the labelled generating function of all perfect matchings of $B_{i,F}-S$ 
Since $\Abs{V(B_{i,F})}$ is odd, any matching $F'$ with $F'\in\Aligned{B_{i,F},Y_{i,F}}$ must be of even size and thus $\mathsf{p}_S\neq 0$ is possible if and only if $\Abs{S}$ is odd. 
Moreover, we have $\mathsf{p}_a= 0$ since $B_{i,F}-a$ has no perfect matching. 
We introduce two new vertices, $v$ and $w,$ and define $J_{i,F}$ and $λ_{i,F}$ as depicted in \textbf{Case 8} of \autoref{@transience}. 
Let $G_F'$ be the graph obtained from $G_F$ by replacing $B_{i,F}$ with $J_{i,F}.$ 
We complete the introduction of the  matchgates by discussing the \emph{representativeness} of $(J_{i,F},λ_{i,F})$ in this case. 
Notice that for every perfect matching $M$ of $G_F',$ the vertex $v$ must either be matched with $a$ or with $w.$ 
In case $vw\in M$ no other edge of $J_{i,F}$ can be contained in $M$ and thus all three vertices, $a,$ $b,$ and $c$ must be matched outside of $J_{i,F}.$ 
With $λ_{i,F}(vw)=\mathsf{p}_{abc}$ this is correctly represented. 
Hence, we may assume $av\in M.$ 
This means we must either have $bw$ or $cw$ in $M.$ 
Since $λ_{i,F}(a)=1,$ the total contribution of the two edges of $J_{i,F}$ in $M$ is exactly as intended in both cases.
\end{description}

\begin{figure} 
\centering 
\begin{tikzpicture}[scale=1] 
\pgfdeclarelayer{background} 
\pgfdeclarelayer{foreground} 
\pgfsetlayers{background,main,foreground}

\node [v:ghost] (G) {}; 
 
\node [v:ghost,position=180:57mm from G] (C1) {}; 
\node [v:ghost,position=180:21mm from G] (C2) {}; 
\node [v:ghost,position=0:21mm from G] (C3) {}; 
\node [v:ghost,position=0:57mm from G] (C4) {}; 
 
\node [v:main,position=90:5mm from C1] (w1) {}; 
\node [v:main,position=210:5mm from C1] (u1) {}; 
\node [v:main,position=330:5mm from C1] (v1) {}; 
 
\node[v:ghost,position=270:3mm from u1] (lu1) {\small $u$}; 
\node[v:ghost,position=13:3mm from v1] (lv1) {\small $v$}; 
\node[v:ghost,position=150:3mm from w1] (lw1) {\small $w$};

\node [v:main,position=90:15mm from C1] (c1) {}; 
\node [v:main,position=210:15mm from C1] (a1) {}; 
\node [v:main,position=330:15mm from C1] (b1) {}; 
 
\node[v:ghost,position=270:3mm from a1] (la1) {\small $a$}; 
\node[v:ghost,position=270:3mm from b1] (lb1) {\small $b$}; 
\node[v:ghost,position=90:3mm from c1] (lc1) {\small $c$}; 
 
\node [v:ghost,position=270:17mm from C1] {\textbf{Case 5}}; 
 
\draw [e:main] (w1) to (u1); 
\draw [e:main] (u1) to (v1); 
\draw [e:main] (v1) to (w1); 
 
\draw [e:main] (w1) to (c1); 
\draw [e:main] (v1) to (b1); 
\draw [e:main] (u1) to (a1); 
 
\node[v:ghost,position=55:5.3mm from a1] (lea1) {\small $\mathsf{p}_{\emptyset}$}; 
\node[v:ghost,position=173:5mm from b1] (leb1) {\small $1$}; 
\node[v:ghost,position=290:5mm from c1] (lec1) {\small $1$}; 
\node[v:ghost,position=270:9.5mm from w1] (leuv1) {\small $\mathsf{p}_{ab}$}; 
\node[v:ghost,position=27.5:10.5mm from u1] (levw1) {$\frac{\mathsf{p}_{bc}}{\mathsf{p}_{\emptyset}}$}; 
\node[v:ghost,position=157.5:10.5mm from v1] (lewu1) {\small $\mathsf{p}_{ac}$}; 
 
\node [v:main,position=90:5mm from C2] (w2) {}; 
 
\node[v:ghost,position=157.5:3mm from w2] (lw2) {\small $w$}; 
 
\node [v:main,position=90:15mm from C2] (c2) {}; 
\node [v:main,position=210:15mm from C2] (a2) {}; 
\node [v:main,position=330:15mm from C2] (b2) {}; 
 
\node[v:ghost,position=270:3mm from a2] (la2) {\small $a$}; 
\node[v:ghost,position=270:3mm from b2] (lb2) {\small $b$}; 
\node[v:ghost,position=90:3mm from c2] (lc2) {\small $c$}; 
 
\node [v:ghost,position=270:17mm from C2] {\textbf{Case 6}}; 
 
\draw [e:main] (w2) to (a2); 
\draw [e:main] (w2) to (b2); 
\draw [e:main] (w2) to (c2); 
 
\node[v:ghost,position=62:8.5mm from a2] (lea2) {\small $\mathsf{p}_{bc}$}; 
\node[v:ghost,position=155:8.5mm from b2] (leb2) {\small $\mathsf{p}_{ac}$}; 
\node[v:ghost,position=300:6mm from c2] (lec2) {\small $\mathsf{p}_{ab}$}; 
 
\node [v:main,position=90:5mm from C3] (w3) {}; 
\node [v:main,position=270:2mm from C3] (v3) {}; 
 
\node[v:ghost,position=270:3mm from v3] (lv3) {\small $v$}; 
\node[v:ghost,position=22.5:3mm from w3] (lw3) {\small $w$}; 
 
\node [v:main,position=90:15mm from C3] (c3) {}; 
\node [v:main,position=210:15mm from C3] (a3) {}; 
\node [v:main,position=330:15mm from C3] (b3) {}; 
 
\node[v:ghost,position=270:3mm from a3] (la3) {\small $a$}; 
\node[v:ghost,position=270:3mm from b3] (lb3) {\small $b$}; 
\node[v:ghost,position=90:3mm from c3] (lc3) {\small $c$}; 
 
\node [v:ghost,position=270:17mm from C3] {\textbf{Case 7}}; 
 
\draw [e:main] (w3) to (c3); 
\draw [e:main] (w3) to (v3); 
\draw [e:main] (v3) to (a3); 
\draw [e:main] (v3) to (b3); 
\draw [e:main] (a3) to (w3); 
 
\node[v:ghost,position=5:8mm from a3] (leav3) {\small $\mathsf{p}_{b}$}; 
\node[v:ghost,position=65:9mm from a3] (leaw3) {$\frac{\mathsf{p}_c}{\mathsf{p}_a}$}; 
\node[v:ghost,position=173:6.5mm from b3] (leb3) {\small $\mathsf{p}_a$}; 
\node[v:ghost,position=290:5mm from c3] (lec3) {\small $1$}; 
\node[v:ghost,position=315:5mm from w3] (levw3) {\small $\mathsf{p}_{abc}$}; 
 
\node [v:main,position=90:5mm from C4] (w4) {}; 
\node [v:main,position=225:9mm from w4] (v4) {}; 
 
\node[v:ghost,position=135:3mm from v4] (lv4) {\small $v$}; 
\node[v:ghost,position=22.5:3mm from w4] (lw4) {\small $w$}; 
 
\node [v:main,position=90:15mm from C4] (c4) {}; 
\node [v:main,position=210:15mm from C4] (a4) {}; 
\node [v:main,position=330:15mm from C4] (b4) {}; 
 
\node[v:ghost,position=270:3mm from a4] (la4) {\small $a$}; 
\node[v:ghost,position=270:3mm from b4] (lb4) {\small $b$}; 
\node[v:ghost,position=90:3mm from c4] (lc4) {\small $c$}; 
 
\node [v:ghost,position=270:17mm from C4] {\textbf{Case 8}}; 
 
\draw [e:main] (w4) to (c4); 
\draw [e:main] (w4) to (v4); 
\draw [e:main] (v4) to (a4); 
\draw [e:main] (w4) to (b4); 
 
\node[v:ghost,position=20:5mm from a4] (leav4) {\small $1$}; 
\node[v:ghost,position=120:8mm from b4] (leb4) {\small $\mathsf{p}_c$}; 
\node[v:ghost,position=295:5.3mm from c4] (lec4) {\small $\mathsf{p}_b$}; 
\node[v:ghost,position=270:5mm from w4] (levw4) {\small $\mathsf{p}_{abc}$}; 
 
\begin{pgfonlayer}{background} 
\end{pgfonlayer} 
\end{tikzpicture} 
\caption{We assume that $(G,X,\mathbf{p})$ is some labelled boundary graph and, moreover, let $\mathcal{B}=((G,X),A,\mathbf{p},(B_1,Y_1),\dots,(B_{\ell},Y_{\ell}))$ be a $k$-branching of $(G,X,\mathbf{p}).$ Let $F$ be an extendable matching such that every vertex of $A\setminus X$ is covered by an edge of $F,$ every edge in $F$ has an endpoint in $X\cup A,$ and $G-X-V(F)$ has a perfect matching. 
Let $i\in[\ell].$ 
The figure shows the four  matchgates for the case where $Y_i\setminus (V(F)\cup X)=\Set{a,b,c}$ for some branch $(B_i,Y_i,\mathbf{p})$ of $\mathcal{B}.$ In case $B_i-(V(F)\cup X)$ has an even number of vertices (\textbf{Cases} 5 and 6) and an odd number of vertices (\textbf{Cases} 7 and 8). 
If $\Abs{B_i-(V(F)\cup X)}$ is even we use \textbf{Case 5} except in the situation where $\mathsf{p}_{\emptyset}=0,$ then we use \textbf{Case 6} instead. 
Similarly, in case $\Abs{B_i-(V(F)\cup X)}$ is odd, we use \textbf{Case 7} except if $\mathsf{p}_{a}=0,$ then we use \textbf{Case 8} instead.} 
\label{@transience}
\end{figure}
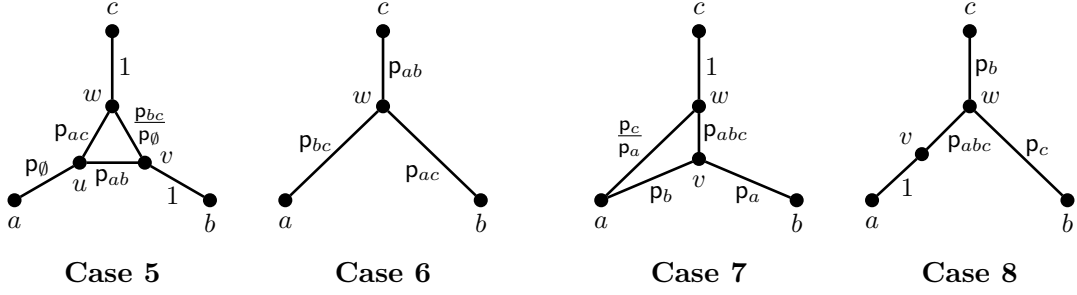

We can now employ the  matchgates to produce the labelled generating functions for a boundary graph with a $k$-branching, given the corresponding sign post.

\begin{lemma}\label{@incisiveness}
Let $k\in\Bbb{N}$ be some integer.
Let $(G,X,\mathbf{p})$ be a labelled boundary graph with an additional set $A\subseteq V(G)$ such that $\Abs{X},\Abs{A}\leq k.$
If we are given a $k$-branching $\mathcal{B}=((G,X),A,\mathbf{p},(B_1,Y_1),\dots,(B_{\ell},Y_{\ell}))$ together with the sign post $\SignPost$ of $\mathcal{B},$ then the set $\GenerateMatchingsBoundary{(G,X,\mathbf{p})}$ can be computed in time $\Abs{G}^{\mathcal{O}(k)}.$
\end{lemma}

\begin{proof}
Note that $(G,A\cup X,\mathbf{p})$ is also a labelled boundary graph.
Let $\mathcal{F}$ be the collection of all matchings $F$ in $G$ such that $F\in\Aligned{G,A\cup X}$ and $A\subseteq V(F).$
Since $\Abs{A\cup X}=\O(k)$ we have $\Abs{\mathcal{F}}\in\Abs{G}^{\mathcal{O}(k)}$ and $\mathcal{F}$ can be found in time $\Abs{G}^{\mathcal{O}(k)}.$
Similarly, we can find the set $\Aligned{G,X}.$
Now every set $F\in\mathcal{F}$ can be covered by $\ell+2$ sets $F_i$ as follows.
\begin{enumerate}[label=\textit{\roman*})] 
\item Let $F_1$ be the set of all edges in $F$ with at least one endpoint in $X,$ 
\item let $F_2$ be the set of all edges in $F$ with at most one endpoint in $\bigcup_{j\in[\ell]}Y_j,$ and 
\item for every $i\in[3,\ell+2]$ let $F_i$ be the set of all edges in $F$ with both endpoints in $B_{i-2}.$
\end{enumerate}
It follows that for every $F$ we have $F_1\in\Aligned{G,X}$ and $F_2=F\setminus(\bigcup_{i\in[3,\ell+2]}F_i).$

We now fix some $F\in\mathcal{F}.$
Let also $i\in[\ell].$ Then $\Abs{Y_i\setminus V(F)}\leq 3,$ but it is not necessary equal to zero.
Hence, there may be $\mathcal{O}({\Abs{G}^3})$ matchings $W$ such that $F_{i+2}\subseteq W$ and $W\in\Aligned{B_i,Y_i}.$
For every $i\in[\ell]$ let $\mathcal{W}_{i,F}$ be the collection of all matchings $W$ such that $F_{i+2}\subseteq W$ and $W\in\Aligned{B_i,Y_i}.$
Moreover, for every $S\subseteq Y_i\setminus V(F)$ we denote by $\mathcal{W}_{i,F,S}$ the subset of $\mathcal{W}_{i,F}$ such that for all $W\in \mathcal{W}_{i,F,S}$ we have $Y_i\setminus V(W)=S$ and $V(W)\cap X=V(F)\cap X.$
This means that the members of $\mathcal{W}_{i,F,S}$ are exactly the matchings of the members of $\Aligned{B_i,Y_i}$ that contain $F_{i+2}$ and expose the set $S\cup(X\setminus V(F)).$
We define
\begin{align*} 
\mathsf{p}^{i,F}_S\coloneqq  \sum_{W\in\mathcal{W}_{i,F,S}} (\prod_{\substack{e\in F_{i+2}\\\mathbf{p}(e)\neq 0}}\mathbf{p}(e))^{-1}\cdot\GenerateMatchingsBoundaryF{(B_i,Y_i,\mathbf{p})}{W}.
\end{align*}
Note that, since we are given the sign post of $\mathbf{B}$ and $\Abs{Y_i\setminus V(F)}\leq 3,$ we compute all $\mathsf{p}_S^{i,F}$ for all $i$ and $S$ in time $\Abs{V{G}}^{\mathcal{O}(k)}.$
Moreover, $\mathsf{p}_S^{i,F}$ is exactly the labelled generating function of all perfect matchings in the labelled graph $(B_i-V(F)-X-S,\mathbf{p}).$
Note that, for fixed $i\in[\ell]$ and $F\in\mathcal{F},$ the $\mathsf{p}_S^{i,F}$ can be used to replace the functions $\mathsf{p}_S$ used in the construction of the  matchgates and we maintain their representativeness.
We will make use of this observation to replace each $(B_i,Y_i)$ by some  matchgate in the graph $G-V(F)-X$ in order to produce a labelled graph of bounded Euler-genus.

The next steps of this proof are as follows:
We first formally describe the construction and discuss its validity.
Then we use \autoref{@wittgensteiris} to produce the labelled generating function of the resulting graph with its adjusted labeling.
Finally, we show that the resulting labelled generating functions can be used to correctly produce $\GenerateMatchingsBoundaryF{(G,X,\mathbf{p})}{R}$ for all $R\in\Aligned{G,X}.$

For some fixed $F\in\mathcal{F},$ recall according to the definition of the $F$-reduced branching $\mathcal{B}_F\coloneqq  ((G_F,\emptyset),\emptyset,\mathbf{p},(B_{1,F},Y_{1,F}),\dots,(B_{\ell,F},Y_{\ell,F})).$
Where
\begin{align*} 
G_F\coloneqq & G-V(F)-X\text{,}\\ 
B_{i,F}\coloneqq & B_i-V(F)-X\text{ for all }i\in[\ell]\text{, and}\\ 
Y_{i,F}\coloneqq & Y_i\setminus(V(F)\cup X)\text{ for all }i\in[\ell]. 
\end{align*}
Observe that, with $A\subseteq V(F)$ and because of the definition of $k$-branchings, the graph $G_F$ 
admits a drawing $\gamma_F$ without crossings in a surface $\Sigma$ (without boundary) of Euler-genus at most $k,$ and for every $i\in[\ell],$ the vertices of $Y_{i,F}$ lie on a common face of $\gamma_F.$
Now let $G_F'$ be obtained from $G_F$ by replacing, for every $i\in[\ell],$ the subgraph $\InducedSubgraph{G_F}{Y_{i,F}}$ with the appropriate  matchgate while using the functions $\mathsf{p}_S^{i,F}$ for the functions necessarily to produce the labellings of the  matchgates.
Let $\mathbf{p}'$ be the resulting labeling of $G_F'.$
Note that, since none of the $Y_{i,F}$ is contained in some other $Y_{j,F}$ by definition, and the fact that the  matchgates only introduce new vertices and never delete the vertices of $Y_{i,F},$ the different $Y_{i,F}$'s do not interfere with the introduction of  matchgates for other $Y_{j,F}$'s and thus $G_F'$ is well-defined.
Moreover, as the vertices of $Y_{i,F}$ lie on a common face of $\gamma_F$ and all  matchgates are planar with the vertices of $Y_{i,F}$ lying on the outer face, $G_F'$ also admits a drawing in $\Sigma$ without crossings, thus has Euler-genus at most $k.$
\medskip

Let $\GenerateMatchings{G_F',\mathbf{p}'}$ be the labelled generating function of the labelled graph $(G_F',\mathbf{p}').$
We claim that $\GenerateMatchings{G_F',\mathbf{p}'} = \GenerateMatchings{G_F,\mathbf{p}}.$
If this holds, we may use \autoref{@wittgensteiris} on $(G_F',\mathbf{p})$ to produce $\GenerateMatchings{G_F,\mathbf{p}}$ in time $\mathcal{O}_{k}(\Abs{G}^{\O(1)}).$
For each $i\in[\ell]$ and every $M\in\Perf{G_F'},$ let us denote by $S_{i,F,M}$ the set of all vertices of $Y_{i,F}$ which are \textbf{not} covered by an edge of the  matchgate $J_{i,F}.$
Recall that, by the representativeness of our  matchgates, we have 
\begin{align*}
\prod_{e\in M\cap E(J_{i,F})}\mathbf{p}'(e)=\mathsf{p}_{S_{i,F,M}^{i,F}}.
\end{align*}
Finally, let us denote by $M^-$ the set $M\setminus (\bigcup_{i\in[\ell]}E(J_{i,F})).$
Then
\begin{align*} 
\sum_{M\in\Perf{G_F'}}\mathbf{p}'(M) =& \sum_{M\in\Perf{G_F'}} \prod_{e\in M}\mathbf{p}'(e)\\ 
=& \sum_{M\in\Perf{G_F'}}(\prod_{e\in M^-}\mathbf{p}'(e))(\prod_{i\in[\ell]}\prod_{e\in M\cap E(J_{i,F})}\mathbf{p}'(e))\\ 
=& \sum_{M\in\Perf{G_F'}}(\prod_{e\in M^-}\mathbf{p}(e))(\prod_{i\in[\ell]}\prod_{e\in M\cap E(J_{i,F})}\mathbf{p}'(e))\\ 
=& \sum_{M\in\Perf{G_F'}}(\prod_{e\in M^-}\mathbf{p}(e))(\prod_{i\in[\ell]}\mathsf{p}_{S_{i,F,M}}^{i,F})\\ 
=& \sum_{M\in\Perf{G_F'}}(\prod_{e\in M^-}\mathbf{p}(e))(\prod_{i\in[\ell]}\GenerateMatchings{B_{i,F}-S_{i,F,M},\mathbf{p}})\\ 
=& \sum_{M\in\Perf{G_F'}}(\prod_{e\in M^-}\mathbf{p}(e))(\prod_{i\in[\ell]}\sum_{N\in\Perf{B_{i,F}-S_{i,F,M}}}\mathbf{p}(N))
\end{align*}
\begin{align*}
\phantom{\sum_{M\in\Perf{G_F'}}\mathbf{p}'(M)}=& \sum_{M\in\Perf{G_F'}}(\prod_{e\in M^-}\mathbf{p}(e))(\prod_{i\in[\ell]}\sum_{N\in\Perf{B_{i,F}-S_{i,F,M}}}\prod_{e\in N}\mathbf{p}(e))\\ 
=& \sum_{M\in\Perf{G_F'}}(\prod_{e\in M^-}\mathbf{p}(e))(\sum_{N\in\Perf{\bigcup_{i\in[\ell]}(B_{i,F}-S_{i,F,M})}}\prod_{e\in N}\mathbf{p}(e))\\ 
=& \sum_{M\in\Perf{G_F'}}\sum_{N\in\Perf{\bigcup_{i\in[\ell]}(B_{i,F}-S_{i,F,M})}}(\prod_{e\in M^-}\mathbf{p}(e))(\prod_{e\in N}\mathbf{p}(e))\\ 
=& \sum_{M\in\Perf{G_F}}\prod_{e\in M}\mathbf{p}(e)\\ 
=& \GenerateMatchings{G_F,\mathbf{p}}
\end{align*}
Now fix some $R\in\Aligned{G,X}$ and let $\mathcal{F}_R\subseteq\mathcal{F}$ be the set of all $F\in\mathcal{F}$ with $R=F_1.$
If follows that
\begin{align*} 
    \GenerateMatchingsBoundaryF{(G,X,\mathbf{p})}{R}&= (\prod_{e\in R}\mathbf{p}(e))\cdot \GenerateMatchings{G-V(R)-X,\mathbf{p}}\\
    &= (\prod_{e\in R}\mathbf{p}(e))\cdot \sum_{F\in\mathcal{F}_R}(\prod_{e\in F\setminus R}\mathbf{p}(e))\cdot\GenerateMatchings{G-V(F)-X,\mathbf{p}}\\
    &=\sum_{F\in\mathcal{F}_R} 
    \big(\prod_{e\in F}\mathbf{p}(e)\big)\cdot\GenerateMatchings{G-V(F)- X,\mathbf{p}}. 
\end{align*}
Since $\Abs{\mathcal{F}_R}\in\Abs{G}^{\mathcal{O}(k)}$ and, as we have seen in the discussion above, $\GenerateMatchings{G-V(F)-X,\mathbf{p}}$ can be computed in time $\mathcal{O}_{k}(\Abs{G}^{\O(1)})$ using \autoref{@wittgensteiris} on the labelled graph $(G_F',\mathbf{p}'),$ $\GenerateMatchingsBoundaryF{(G,X,\mathbf{p})}{R}$ can be computed in time $\Abs{G}^{\mathcal{O}(k)}.$
This, together with the bounded size of $\Aligned{G,X}$ completes our proof.
\end{proof}

\paragraph{Merging bags.}

The remaining piece for our algorithm is to merge the tables of subtrees that attach to a common boundary.
For the sake of simplicity, in all algorithms above we have always assumed that the boundaries of boundary subgraph are always distinct and never nested.
As the merging of subtrees can be performed iteratively on two subtrees, it suffices to consider the case where we have two boundary graphs with nested boundaries. Towards this, we will prove the following stronger version that contains the nested assumption as a special case. While being more general, it enjoys some notational symmetry
on the role of the sets $X_{1}$ and $X_{2},$ which makes more easy the presentation of the proof.

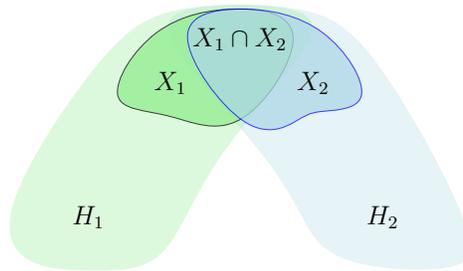
\begin{figure}[th]
\begin{center} 
\hspace{-3.4cm}\scalebox{0.84}{\tikzstyle{ipe stylesheet} = [
  ipe import,
  even odd rule,
  line join=round,
  line cap=butt,
  ipe pen normal/.style={line width=0.4},
  ipe pen heavier/.style={line width=0.8},
  ipe pen fat/.style={line width=1.2},
  ipe pen ultrafat/.style={line width=2},
  ipe pen normal,
  ipe mark normal/.style={ipe mark scale=3},
  ipe mark large/.style={ipe mark scale=5},
  ipe mark small/.style={ipe mark scale=2},
  ipe mark tiny/.style={ipe mark scale=1.1},
  ipe mark normal,
  /pgf/arrow keys/.cd,
  ipe arrow normal/.style={scale=7},
  ipe arrow large/.style={scale=10},
  ipe arrow small/.style={scale=5},
  ipe arrow tiny/.style={scale=3},
  ipe arrow normal,
  /tikz/.cd,
  ipe arrows, 
  <->/.tip = ipe normal,
  ipe dash normal/.style={dash pattern=},
  ipe dash dotted/.style={dash pattern=on 1bp off 3bp},
  ipe dash dashed/.style={dash pattern=on 4bp off 4bp},
  ipe dash dash dotted/.style={dash pattern=on 4bp off 2bp on 1bp off 2bp},
  ipe dash dash dot dotted/.style={dash pattern=on 4bp off 2bp on 1bp off 2bp on 1bp off 2bp},
  ipe dash normal,
  ipe node/.append style={font=\normalsize},
  ipe stretch normal/.style={ipe node stretch=1},
  ipe stretch TikZ-normal/.style={ipe node stretch=1},
  ipe stretch normal,
  ipe opacity 10/.style={opacity=0.1},
  ipe opacity 30/.style={opacity=0.3},
  ipe opacity 50/.style={opacity=0.5},
  ipe opacity 75/.style={opacity=0.75},
  ipe opacity opaque/.style={opacity=1},
  ipe opacity opaque,
]
\definecolor{red}{rgb}{1,0,0}
\definecolor{blue}{rgb}{0,0,1}
\definecolor{green}{rgb}{0,1,0}
\definecolor{yellow}{rgb}{1,1,0}
\definecolor{orange}{rgb}{1,0.647,0}
\definecolor{gold}{rgb}{1,0.843,0}
\definecolor{purple}{rgb}{0.627,0.125,0.941}
\definecolor{gray}{rgb}{0.745,0.745,0.745}
\definecolor{brown}{rgb}{0.647,0.165,0.165}
\definecolor{navy}{rgb}{0,0,0.502}
\definecolor{pink}{rgb}{1,0.753,0.796}
\definecolor{seagreen}{rgb}{0.18,0.545,0.341}
\definecolor{turquoise}{rgb}{0.251,0.878,0.816}
\definecolor{violet}{rgb}{0.933,0.51,0.933}
\definecolor{darkblue}{rgb}{0,0,0.545}
\definecolor{darkcyan}{rgb}{0,0.545,0.545}
\definecolor{darkgray}{rgb}{0.663,0.663,0.663}
\definecolor{darkgreen}{rgb}{0,0.392,0}
\definecolor{darkmagenta}{rgb}{0.545,0,0.545}
\definecolor{darkorange}{rgb}{1,0.549,0}
\definecolor{darkred}{rgb}{0.545,0,0}
\definecolor{lightblue}{rgb}{0.678,0.847,0.902}
\definecolor{lightcyan}{rgb}{0.878,1,1}
\definecolor{lightgray}{rgb}{0.827,0.827,0.827}
\definecolor{lightgreen}{rgb}{0.565,0.933,0.565}
\definecolor{lightyellow}{rgb}{1,1,0.878}
\definecolor{black}{rgb}{0,0,0}
\definecolor{white}{rgb}{1,1,1}
\begin{tikzpicture}[ipe stylesheet]
  \fill[lightblue, ipe opacity 30]
    (229.896, 376)
     .. controls (239.2293, 362.6667) and (259.6147, 341.3333) .. (271.1407, 325.3333)
     .. controls (282.6667, 309.3333) and (285.3333, 298.6667) .. (292, 292)
     .. controls (298.6667, 285.3333) and (309.3333, 282.6667) .. (320, 281.3333)
     .. controls (330.6667, 280) and (341.3333, 280) .. (349.3333, 281.3333)
     .. controls (357.3333, 282.6667) and (362.6667, 285.3333) .. (356.474, 302.6667)
     .. controls (350.2813, 320) and (332.5627, 352) .. (319.2293, 370.6667)
     .. controls (305.896, 389.3333) and (296.948, 394.6667) .. (281.8073, 397.3333)
     .. controls (266.6667, 400) and (245.3333, 400) .. (233.8073, 397.3333)
     .. controls (222.2813, 394.6667) and (220.5627, 389.3333) .. cycle;
  \fill[lightgreen, ipe opacity 30]
    (193.0073, 373.3333)
     .. controls (179.674, 357.3333) and (161.837, 330.6667) .. (155.5852, 313.3333)
     .. controls (149.3333, 296) and (154.6667, 288) .. (162.6667, 284)
     .. controls (170.6667, 280) and (181.3333, 280) .. (192, 280)
     .. controls (202.6667, 280) and (213.3333, 280) .. (221.3333, 288)
     .. controls (229.3333, 296) and (234.6667, 312) .. (246.2518, 329.3333)
     .. controls (257.837, 346.6667) and (275.674, 365.3333) .. (283.674, 377.3333)
     .. controls (291.674, 389.3333) and (289.837, 394.6667) .. (278.2518, 397.3333)
     .. controls (266.6667, 400) and (245.3333, 400) .. (230.2518, 397.3333)
     .. controls (215.1703, 394.6667) and (206.3407, 389.3333) .. cycle;
  \filldraw[fill=lightgreen, ipe opacity 75]
    (218.6667, 386.6667)
     .. controls (208, 378.6667) and (200, 365.3333) .. (201.3333, 358.6667)
     .. controls (202.6667, 352) and (213.3333, 352) .. (224, 349.3333)
     .. controls (234.6667, 346.6667) and (245.3333, 341.3333) .. (256.6667, 348)
     .. controls (268, 354.6667) and (280, 373.3333) .. (280, 384)
     .. controls (280, 394.6667) and (268, 397.3333) .. (255.3333, 397.3333)
     .. controls (242.6667, 397.3333) and (229.3333, 394.6667) .. cycle;
  \filldraw[draw=blue, fill=lightblue, ipe opacity 75]
    (293.3333, 386.6667)
     .. controls (282.6667, 394.6667) and (269.3333, 397.3333) .. (256.6667, 397.3333)
     .. controls (244, 397.3333) and (232, 394.6667) .. (232.6667, 384)
     .. controls (233.3333, 373.3333) and (246.6667, 354.6667) .. (257.3333, 347.3333)
     .. controls (268, 340) and (276, 344) .. (280.6667, 346.6667)
     .. controls (285.3333, 349.3333) and (286.6667, 350.6667) .. (292.6667, 351.3333)
     .. controls (298.6667, 352) and (309.3333, 352) .. (310.6667, 358.6667)
     .. controls (312, 365.3333) and (304, 378.6667) .. cycle;
  \node[ipe node, font=\large]
     at (281.778, 361.149) {$X_2$};
  \node[ipe node, font=\large]
     at (217.778, 361.149) {$X_1$};
  \node[ipe node, font=\large]
     at (235.045, 380.617) {$X_{1}\cap X_{2}$};
  \node[ipe node, font=\large]
     at (180.985, 300.618) {$H_{1}$};
  \node[ipe node, font=\large]
     at (312.985, 300.618) {$H_{2}$};
\end{tikzpicture}} 
\end{center} 
\vspace{-3mm} 
\caption{A visualization of \autoref{@penitentiary}.} 
\label{@pelagianism}
\end{figure}

\begin{lemma}\label{@penitentiary}
Let $k\in\Bbb{N}$ be some integer.
Let $(H_1,X_1,\mathbf{p})$ and $(H_2,X_2,\mathbf{p})$ be two labelled boundary graphs with $V(H_1)\cap V(H_2)\subseteq X_1\cap X_2$ and let $X \coloneqq  X_1\cap X_2$ where $\Abs{X_1},\Abs{X_2}\leq k.$
Suppose we are given $\GenerateMatchingsBoundary{(H_1,X_1,\mathbf{p})}$ and $\GenerateMatchingsBoundary{(H_2,X_2,\mathbf{p})}.$ Then $\GenerateMatchingsBoundary{(H_1\cup H_2,X,\mathbf{p})}$ can be computed in time $\Abs{V(H_1\cup H_2)}^{\mathcal{O}(k)}.$
\end{lemma}

\begin{proof}
Similarly to the arguments before, we start by computing the set $\Aligned{H_1\cup H_2,X}$ and extract the set $\mathcal{F}$ of all matchings $F$ with $F\in\Aligned{H_1\cup H_2,X}$ in time $\Abs{V(H_1\cup H_2)}^{\mathcal{O}(k)}.$
Now fix some $F\in\mathcal{F}$ and observe that for every $i\in[2],$ there might be several extendable pairs $F_i\in\Aligned{H_i,X_i}$ such that $F\cap E(H_i)\subseteq F_i$ and $X\setminus V(F)\subseteq X_i\setminus V(F_i\cup F).$
Moreover, note that for every such $F_i$ we have $V(F_i)\setminus V(F)\subseteq V(H_i)\setminus V(H_{3-i})$ by the definition of $X.$
Finally, any extendable pair $F_i\in\Aligned{H_i,F_i}$ such that $X\setminus V(F)\subsetneq X_i\setminus V(F_i\cup F)$ will not make any contribution to $\GenerateMatchings{H_1\cup H_2,X,\mathbf{p},F},$ since $\GenerateMatchings{H_i,X_i,\mathbf{p},F_i}$ counts all perfect matchings of $H_i-(X_i\setminus V(F_i))$ and therefore includes matchings that expose vertices of $X_i\setminus X.$ (See \autoref{@pelagianism} for a visualization of $(H_1,X_1,\mathbf{p})$ and $(H_2,X_2,\mathbf{p}).$)

Thus, for each $i\in[2]$ and every $F\in\mathcal{F}$ let $\mathcal{F}_{i,F}$ be the collection of all $F_i$ such that $F_i\in\Aligned{H_i,X_i},$ $F\cap E(H_i)\subseteq F_i,$ and $X_i\setminus V(F_i\cup F)=X\setminus V(F).$
Since $\Abs{X_i}\leq k$ we can be sure that $\Abs{\mathcal{F}_{i,F}}\in\Abs{V(H_i)}^{\mathcal{O}(k)}.$
We define
\begin{align*} 
\mathsf{p}_{i,F}\coloneqq  \sum_{F_i\in\mathcal{F}_{i,F}}(\prod_{e\in F\cap E(H_i)}\mathbf{p}(e))^{-1}\cdot \GenerateMatchingsBoundaryF{H_i,X_i,\mathbf{p}}{F_i}.
\end{align*}
Hence, $\mathsf{p}_{i,F}$ is the labelled generating function for the perfect matchings of $(H_i-(V(F\cap E(H_i))\cup X),\mathbf{p}).$
From this observation we obtain that
\begin{align*} 
\GenerateMatchings{H_1\cup H_2,X,\mathbf{p},F}=\mathsf{p}_{1,F}\cdot\mathsf{p}_{2,F}\cdot \prod_{e\in F}\mathbf{p}(e),
\end{align*}
which can be computed in time $\Abs{V(H_1)\cup V(H_2)}^{\mathcal{O}(k)}$ from $\GenerateMatchingsBoundary{(H_1,X_1,\mathbf{p})}$ and $\GenerateMatchingsBoundary{(H_2,X_2,\mathbf{p})}$ and thus our claim follows.
\end{proof}

\begin{proof}[Proof of \autoref{@unencumbered}] 
We are given a weighted graph $(G,\mathbf{w})$ as input. 
In the following we will be working with the labelled graph $(G,\mathbf{p}_{\mathbf{w}})$ instead. 
 
Let $\alpha $ and $\gamma$ be defined as in \autoref{@diskussion}. 
Then by \autoref{@incinerated} we can find a tree decomposition $(T,\beta ),$ where we see $T$  
as rooted on a root $r\in V(T).$ Also, adhesion at most $\alpha $ such that for every $d\in V(T),$ the torso $G_d$ of $G$ at $d$ has a set $A_d\subseteq V(G_d)$ of size at most $4\alpha $ for which the graph $G_d-A_d$ has Euler-genus at most $\gamma$ in time $\mathcal{O}_{t}\Abs{G}^3).$ 
Moreover, for every $(d_1,d_2)\in E(T)$ we have $\Abs{(\beta (d_1)\setminus A_{d_1})\cap(\beta (d_2)\setminus A_{d_2})}\leq 3,$ and if $\Abs{(\beta (d_1)\setminus A_{d_1})\cap(\beta (d_2)\setminus A_{d_2})}=3$ and $\beta (d_1)$ is larger than $4\alpha ,$ then $(\beta (d_1)\setminus A_{d_1})\cap(\beta (d_2)\setminus A_{d_2})$ induces a triangle in the corresponding drawing of $G_{d_1}-A_{d_1}$ which bounds a face. 
Observe that $\Abs{V(T)}\in\mathcal{O}(\Abs{G}).$ 
 
Set $k=\max\Set{4\alpha ,\gamma}$ and let $t\in V(T)$ be an arbitrary leaf. 
If $\Abs{\beta (t)}\leq 4\alpha $ we may use \autoref{@celebration} to initialize the tables at $t,$ otherwise we use \autoref{@transformations}. 
Now let $t\in V(T)$ be some internal vertex such that the tables for the subtrees rooted at the children of $t$ have already been computed. 
We proceed by iteratively applying \autoref{@celebration} and \autoref{@penitentiary}  
as long as we deal with torsos of  size at most $4\alpha .$ 
Then we find a $k$-branching $\mathcal{B}=((H,X),A_t,\mathbf{p}_{\mathbf{w}},(B_1,Y_1),\dots,(B_{\ell},Y_{\ell})),$ where the $(B_i,Y_i)$ represent the graphs induced by the union of the bags in the subtrees rooted at the children of $t$ which were merged into the adhesion set $Y_i\subseteq \beta (t),$ and $(H,X)$ is the boundary graph where $H$ is the subgraph of $G$ induced by the union of all bags in the subtree of $T$ rooted at $t$ and $X$ is the adhesion of $\beta (t)$ to the bag of the parent node of $t.$ 
In case $t=r$ we have $X=\emptyset.$ 
Using the procedure from \autoref{@incisiveness} allows us to correctly compute the table for $\mathcal{B}.$ 
 
Hence, by induction, we are able to compute $\GenerateMatchingsBoundary{(G,\emptyset,\mathbf{p}_{\mathbf{w}})}$ in time $\Abs{G}^{\mathcal{O}(\alpha )}.$ 
Notice that the only member of $\GenerateMatchingsBoundary{(G,\emptyset,\mathbf{p}_{\mathbf{w}})}$ is exactly $\GenerateMatchings{G,\mathbf{w}}$ and our proof is complete.
\end{proof}

\subsection{The complexity lower bound}\label{@stiidentenzeitung}

In the previous section we have seen how to compute the generating function for all perfect matchings efficiently on a graph that excludes some shallow vortex minor.
Once the generating function is known, the number of perfect matchings can be found by evaluating the function for $x=1.$
Hence, if $\mathcal{G}$ is a proper minor-closed class of graphs excluding some shallow vortex minor, computing the number of perfect matchings is tractable on $\mathcal{G}.$
Towards completing the proof of our main result (that is \autoref{@controllers}), it remains to prove \autoref{@winterhilfiwerlr}, i.e., show that on any proper minor-closed class of graphs which contains all shallow vortex minors, the problem of counting perfect matchings remains $\#\mathsf{P}$-hard.

To show this we make use of a recent complexity result from  \cite{curticapean2022parameterizing} which was used by Curticapean and Xia to prove that counting perfect matchings is $\#\mathsf{P}$-hard on $K_8$-minor free graphs.

\begin{proposition}[\!\!\cite{curticapean2022parameterizing}]
\label{@contradict}
Counting perfect matchings is $\#\mathsf{P}$-hard on the class of \hyperref[@consistently]{ring blowup} graphs.
\end{proposition}

The next observation follows by straightforward graph drawing arguments (see e.g., \cite[Lemma 5.5]{BasteST23Hitting}). Recall that we defined a \textsl{standard cross-free drawing on a disk $ \Delta$} of the $(t\times s)$-cylindrical grid one where of the extremal cycles is drawn on the boundary of $ \Delta$
\begin{lemma} 
\label{@proletariat}
There exists a function $f:\Bbb{N}\to\Bbb{N}$ such that the following holds:
Suppose that $\gamma$ is the embedding of some 2-connected planar graph $G$ on $n$ vertices in a closed disk $ \Delta$ and let $C$ be the cycle of $G$ whose drawing is the  boundary of $ \Delta.$  Let also $\gamma'$ be a standard cross-free drawing on a disk of the \hyperref[@consistently]{$(f(n)\times f(n))$-cylindrical grid} $G'$ in a closed disk  $ \Delta'$
and let $C'$ be the cycle of $G'$ defined by the boundary of the external face of $\gamma'.$
Then there is a \hyperref[@attachements]{minor model} $\CondSet{X_v}{v\in\V{H}}$ 
of $G$ to $G'$ such that for every $v\in V(C),$ $X_{v}$ is a subpath of $C'.$
Moreover, $f$ is a quadratic function.
\end{lemma}

It is interesting to observe that, according to \autoref{@incessantly} and \autoref{@proletariat}, 
the class $\Scal$ of shallow vortex minors is \textsl{exactly} the class of graphs 
that are minors of  ring blowups.

We now have what we need for the proof of \autoref{@winterhilfiwerlr}.

\begin{proof}[Proof of \autoref{@winterhilfiwerlr}]
From \autoref{@contradict}, it is enough to prove that the class of all \hyperref[@consistently]{ring blowup} graphs is a subset of $\mathcal{S}.$ 
In the special case where $G$ is not a 2-connected graph, we may create a new graph $G^+$ by adding in $G$ a minimum number of edges 
that can make it 2-connected, while maintaining its planarity and the property of having a cross-free drawing $\gamma^+$ on a disk where $Q$ is the set of vertices incident in its external face. Notice that the \hyperref[@consistently]{ring blowup} of $\gamma$ is a subgraph of the ring blowup of $\gamma^+.$ This implies that we may assume that $G$ is a 2-connected planar graph and prove that 
the \hyperref[@consistently]{ring blowup} $\hat{G}$ of $\gamma$ of $G$ is a minor of some, big enough, shallow vortex grid.
The 2-connectivity of $G$ permits us to assume that the vertices that are incident with the external 
face of $\gamma$ are the vertices of some cycle $C$ of $G.$ By \autoref{@proletariat},
there is a \hyperref[@consistently]{$(f(n)\times f(n))$-cylindrical grid} $G'$ whose standard cross-free drawing on a disk is $\gamma'$ and, given that $C'$ is the cycle defined by the boundary of the external face of $\gamma',$ there 
exists some \hyperref[@attachements]{minor model} $\CondSet{X_v}{v\in\V{H}}$ 
of $G$ to $G'$ such that for every $v\in V(C),$ $X_{v}$ is a subpath of $C'.$ 
It is now easy to observe that, because of this last property, $\hat{G}$ is a minor of the  
\hyperref[@consistently]{ring blowup} $\hat{G}'$ of $\gamma'.$ The result follows as $\hat{G}'$ is a $(f(n)\times f(n))$-\hyperref[@consistently]{cylindrical grid ring blowup}, which from \autoref{@incessantly} is the minor of $\mathscr{S}_{g(f(n))}\in \mathcal{S}$ (where $g$ is the function of \autoref{@incessantly}).
\end{proof}

\section{Conclusion}\label{@predominant}
Notice that, by definition, $\mathcal{S}$ is a minor-closed graph class. Let $\mathcal{Q}=\mathsf{obs}(\mathcal{S})$
be its minor-obstruction set, that is the set of all minor-minimal graphs not contained in $\mathcal{S}.$
We know, from Robertson and Seymour's theorem that $\mathcal{Q}$ is a finite set \cite{RobertsonS04wagner}. This set provides a ``finite'' version of the characterization in \autoref{@controllers} as follows.

\begin{corollary}
Let ${\cal F}$ be a finite set of graphs.
$\text{\textsc{\#\emph{Perfect Matching}}}(\mathcal{F})$ is polynomial-time solvable if $\mathcal{F}$ contains some $\mathcal{Q}$-minor free graph; otherwise it is $\#\mathsf{ P}$-complete.
\end{corollary}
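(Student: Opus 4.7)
The plan is to derive this corollary directly from \autoref{@wittgenstein} by a single equivalence on the defining condition. The key observation is that $\mathcal{H}$ is minor-closed (every minor of a minor of a shallow vortex grid is itself a minor of a shallow vortex grid), so its obstruction set $\mathcal{Q}=\mathsf{obs}(\mathcal{H})$ is well-defined and, by the Graph Minors Theorem of Robertson and Seymour~\cite{RobertsonS04wagner}, finite. The defining property of $\mathcal{Q}$ as the obstruction set of $\mathcal{H}$ is precisely that, for every graph $H$,
\[
H\in\mathcal{H}\ \Longleftrightarrow\ H\text{ is }\mathcal{Q}\text{-minor free},
\]
since $\mathcal{H}$ is minor-closed.

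First I would apply the above equivalence pointwise: a finite family $\mathcal{F}$ contains some $\mathcal{Q}$-minor free graph if and only if $\mathcal{F}$ contains some graph in $\mathcal{H}$, which is to say $\mathcal{F}\cap\mathcal{H}\neq\emptyset$. Substituting this into \autoref{@wittgenstein} yields exactly the dichotomy stated in the corollary: under the equivalent condition $\mathcal{F}\cap\mathcal{H}\neq\emptyset$, the positive part of \autoref{@wittgenstein} gives a polynomial-time algorithm for $\text{\textsc{\#Perfect Matching}}(\mathcal{F})$; otherwise, no graph in $\mathcal{F}$ lies in $\mathcal{H}$, so $\mathcal{F}\cap\mathcal{H}=\emptyset$ and the negative part of \autoref{@wittgenstein} gives $\#\mathsf{P}$-completeness.

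There is essentially no obstacle here — the entire content of the corollary is the reformulation of \autoref{@wittgenstein} using the finite obstruction set $\mathcal{Q}$ in place of the (infinite) class $\mathcal{H}$. The only thing worth emphasizing in the write-up is why $\mathcal{Q}$ is finite (Robertson and Seymour's theorem, which applies because $\mathcal{H}$ is minor-closed) and why the equivalence above holds (this is the defining property of an obstruction set of a minor-closed class, together with the minor-closedness of $\mathcal{H}$, which follows immediately from the definition $\mathcal{H}=\{H\mid H\text{ is a minor of some }H_t\}$).
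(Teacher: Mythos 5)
Your proposal is correct and matches the paper's (implicit) argument exactly: the paper also derives the corollary from \autoref{@wittgenstein} by noting that $\mathcal{H}$ is minor-closed, that $\mathcal{Q}=\mathsf{obs}(\mathcal{H})$ is finite by Robertson and Seymour's theorem, and that a graph lies in $\mathcal{H}$ if and only if it is $\mathcal{Q}$-minor free. Nothing is missing.
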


The exact identification of the finite set $\mathcal{Q}$ seems to be an interesting, however hard, combinatorial problem. 

As already mentioned by Curticapean and Xia in~\cite{CurticapeanX21param},  it is interesting to investigate the complexity dichotomies of other families of counting problems in the realm of minor-closed graph classes. 
Such a framework is the one of  \textsl{Holant problems}: $\text{\textsc{\#{Perfect Matching}}}$ is a typical example
of this family of problems \cite{CaiLX09hola,CaiLX16holog}.
\smallskip

Another direction on the study of $\text{\textsc{\#{Perfect Matching}}}$ is to look for classes of bipartite graphs, ordered by the \textsl{matching-minor} relation.
The most general result in this direction is the one of McCuaig, Robertson, Seymour, and Thomas \cite{RobertsonST99perma,McCuaigRST97perma}, implying that such an algorithm exists for Pfaffian bipartite graphs, that, according to Little~\cite{Little06anext} are exactly the graphs excluding $K_{3,3}$ as a \textsl{matching minor}.
Moreover, the number of perfect matchings can be found efficiently on bipartite graphs excluding a planar  bipartite matching minor \cite{giannopoulou2021excluding}.
This induces an alternative line of research, asking for more general 
matching-minor-closed bipartite graph classes where $\text{\textsc{\#{Perfect Matching}}}$ is polynomially solvable.
This line of research may be of particular importance as the complexity of the permanent is directly linked to the complexity of counting perfect matchings on bipartite graphs.

\paragraph{Acknowledgements.} The first author wishes to thank \href{https://www.aueb.gr/en/faculty_page/mourtos-ioannis}{Ioannis Mourtos} for old discussions that offered some early inspiration on this project.

We wish to thank the anonymous referees for helping to greatly improve the presentation of the paper.

%
%
\end{document}